\title{Local cohomology on a subexceptional series of representations}
\author{Andr\'as C. L\H{o}rincz}
\address{Institut f\"ur Mathematik, Humboldt--Universit\"at zu Berlin, Berlin, Germany 12489}
\email{lorincza@hu-berlin.de \vspace{0.1in}}
\author{Jerzy Weyman}
\address{Department of Mathematics, University of Connecticut, Storrs, CT, USA 06269 \newline
\indent Instytut Matematyki, Uniwersytet Jagiello\'nski, Krak\'ow 30-348, Poland}
\email{jerzy.weyman@uconn.edu, jerzy.weyman@owa.uj.edu.pl}
\subjclass[2020]{Primary 14F10, 14B15, 13D45, 13A50, 11S90}
 \newtheorem{theorem}{Theorem}[section]
 \newtheorem{lemma}[theorem]{Lemma}
 \newtheorem{corollary}[theorem]{Corollary}
 \newtheorem{prop}[theorem]{Proposition}
 \newtheorem{example}{Example}[section]
\theoremstyle{remark}
 \newtheorem{remark}[theorem]{Remark}
  \newtheorem*{notation}{Notation}
\newcommand{\defi}[1]{{\upshape\sffamily #1}}
\DeclareMathOperator{\ShHom}{\mathscr{H}\text{\kern -3pt {\calligra\large om}}\,}
\newcommand{\C}{\mathbb{C}}
\newcommand{\Z}{\mathbb{Z}}
\newcommand{\bo}{\bigoplus}
\newcommand{\D}{\mathcal{D}}
\newcommand{\bw}{\bigwedge}
\renewcommand{\det}{\textrm{det}}
\newcommand{\lie}{\mathfrak{g}}
\renewcommand{\ll}{\lambda}
\newcommand{\LL}{\Lambda}
\newcommand{\oo}{\otimes}
\newcommand{\F}{\mathcal{F}}
\renewcommand{\L}{\mathcal{L}}
\renewcommand{\O}{\mathcal{O}}
\newcommand{\R}{\mathcal{R}}
\newcommand{\Q}{\mathcal{Q}}
\newcommand{\V}{\mathcal{V}}
\newcommand{\Ann}{\operatorname{Ann}}
\newcommand{\Ext}{\operatorname{Ext}}
\newcommand{\gr}{\operatorname{gr}}
\newcommand{\Hom}{\operatorname{Hom}}
\newcommand{\rep}{\operatorname{rep}}
\newcommand{\SL}{\operatorname{SL}}
\newcommand{\Sp}{\operatorname{Sp}}
\newcommand{\Sym}{\operatorname{Sym}}
\newcommand{\Tot}{\operatorname{Tot}}
\newcommand\charC{{\operatorname{charC}}}
\newcommand{\reg}{\operatorname{reg}}
\newcommand{\codim}{\operatorname{codim}}
\renewcommand{\det}{\operatorname{det}}
\newcommand{\opmod}{\operatorname{mod}}
\newcommand{\op}{\operatorname}
\newcommand{\CM}{\operatorname{reg}}
\newcommand{\bb}[1]{\mathbb{#1}}
\newcommand{\mc}[1]{\mathcal{#1}}
\newcommand{\mf}[1]{\mathfrak{#1}}
\newcommand{\ol}[1]{\overline{#1}}
\def\lra{\longrightarrow}
\numberwithin{equation}{section}
\begin{document}

 \begin{abstract} 
We consider a series of four subexceptional representations coming from the third line of the Freudenthal--Tits magic square; using Bourbaki notation, these are fundamental representations $(G',X)$ corresponding to $(C_3, \omega_3),\, (A_5, \omega_3), \, (D_6, \omega_5)$ and $(E_7, \omega_6)$. In each of these four cases, the group $G=G'\times \C^*$ acts on $X$ with five orbits, and many invariants display a uniform behavior, \emph{e.g.} dimension of orbits, their defining ideals and the character of their coordinate rings as $G$-modules. In this paper, we determine some more subtle invariants and analyze their uniformity within the series. We describe the category of $G$-equivariant coherent $\D_X$-modules as the category of representations of a quiver with relations. We construct explicitly the simple $G$-equivariant $\D_X$-modules and compute the characters of their underlying $G$-structures. We determine the local cohomology groups with supports given by orbit closures, determining their precise $\D_X$-module structure. As a consequence, we calculate the intersection cohomology groups and Lyubeznik numbers of the orbit closures. While our results for the cases $(A_5, \omega_3), \, (D_6, \omega_5)$ and $(E_7, \omega_6)$ are still completely uniform, the case $(C_3, \omega_3)$ displays a surprisingly different behavior. We give two explanations for this phenomenon: one topological, as the middle orbit of $(C_3, \omega_3)$ is not simply connected; one geometric, as the closure of this orbit is not Gorenstein.
\end{abstract}

\maketitle

\section{Introduction}\label{sec:intro}

The subexceptional series coming the third line of the Freudenthal--Tits magic square corresponds to the following four Dynkin formats: $C_3, \, A_5,  \, D_6,  \, E_7 \,$ \cite[Page 168]{freu}. This line stands for the $5$-dimensional symplectic geometries as explained in \cite{freu}. The third line of the extended magic square has the six Dynkin formats \cite[Section~6]{series}:
\begin{equation}\label{eq:series}
A_1, \quad A_1 \times A_1 \times A_1, \quad  C_3, \quad  A_5, \quad  D_6, \quad E_7.
\end{equation}
There are respective parameters
\[m=-2/3, 0, 1, 2, 4, 8.\]
For each of the corresponding Lie groups $G'$, there is a preferred irreducible representation $X$  \cite[Section~6]{series} that displays some uniform behavior within the series. In all (but the first) cases, $G=G' \times \C^*$ acts on $X$ with five orbits, and $\dim X = 6m+8$.

The first, second, and fourth representations from the series (\ref{eq:series}) correspond to the space of binary cubic forms, of $2\times 2\times 2$ hypermatrices and of alternating senary 3-tensors, respectively. The equivariant $\D$-modules and local cohomology modules for these representations are studied in detail in the articles \cite{bindmod}, \cite{mike} and \cite{senary}, respectively. In this paper we complete the analogous study for the rest of the representations within the series (\ref{eq:series}), emphasizing the uniformity of the methods and results. Further, this completes a necessary step toward the classification of such objects on irreducible representations of reductive groups with finitely many orbits that has been initiated through several articles \cite{catdmod,bindmod,claudiu1,claudiu4,lHorincz2018iterated,senary,mike,mike2,lorspher}, see also \cite{kimu} and references therein for their Bernstein--Sato polynomials.

For the Dynkin diagrams $C_3, D_6, E_7$, we use the following conventions on the ordering of nodes:  

\begin{align}
C_3: & \hspace{0.2in}\xymatrix{ *+[o][F]{1} \ar@{-}[r] &  *+[o][F]{2} &  *+[o][F]{3}\ar@{=}[l] |{\langle}}\nonumber\\
D_6: & \hspace{0.2in}\vcenter{\xymatrix@R-1.8pc{ & & & & *+[o][F]{5}\\ *+[o][F]{1} \ar@{-}[r] &*+[o][F]{2} \ar@{-}[r] &*+[o][F]{3} \ar@{-}[r] &*+[o][F]{4} \ar@{-}[ur] \ar@{-}[dr] & \\ & & & & *+[o][F]{6}}}\nonumber\\
E_7: & \hspace{0.2in}\vcenter{\xymatrix@R-0.8pc{& & *+[o][F]{7} \ar@{-}[d]& & & \\ *+[o][F]{1} \ar@{-}[r]&*+[o][F]{2} \ar@{-}[r]&*+[o][F]{3} \ar@{-}[r]&*+[o][F]{4} \ar@{-}[r]&*+[o][F]{5} \ar@{-}[r]&*+[o][F]{6} }} \nonumber
\end{align}

With the exception of Section \ref{sec:last}, throughout the article the representation $(G',X)$ always denotes either $(C_3, \omega_3),\, (A_5, \omega_3), \, (D_6, \omega_5)$ or $(E_7, \omega_6)$, which come from the third line of the Freudenthal--Tits magic square. Here $(D_6, \omega_5)$ corresponds to the (even) half-spin representation. To display the uniformity of the results better we include $(A_5, \omega_3)$, albeit most results in this case are worked out in \cite{senary}.

Throughout $S=\C[X]$ is the polynomial ring on $X$, and $\D=\D_X$ is the Weyl algebra of differential operators on $X$ with polynomial coefficients. Let $\opmod_G(\D_X)$ denote the category of $G$-equivariant coherent $\D$-modules on $X$ (which are regular and holonomic in our situation). According to the Riemann--Hilbert correspondence, equivariant $\D$-modules correspond to equivariant perverse sheaves, and the simple equivariant $\D$-modules correspond to irreducible equivariant local systems on the orbits. However, their explicit realization is in general a difficult problem (see Open Problem 3 in \cite[Section~6]{mac-vil}).

The representations in the subexceptional series have finitely many orbits, but are not spherical varieties. For the irreducible representations of the latter kind, the categories of equivariant $\D$-modules have been described in \cite{catdmod} and character formulas have been given in \cite{claudiu1,claudiu5, lorspher}. For our four cases the algebra of covariants $S^U$ is still a polynomial ring \cite{brion} (here $U$ denotes a maximal unipotent subgroup of $G$), and they are some of the few representations beyond the spherical case where plethysm formulas for $S^U$ are known. We determine the analogous character formulas for all simple equivariant $\D$-modules, and also provide explicit $\D$-module realizations. The formulas are written as uniformly as possible within the series. 

In the case $(C_3, \omega_3)$ there is an extra simple, since its middle orbit is not simply connected. Moreover, the fact that there are no (semi)-invariant sections for the simple $\D$-module corresponding to the trivial local system on the middle orbit is reflected by the fact that this orbit closure is the only one that is not Gorenstein. In more general setting, we establish a link between the roots of Bernstein--Sato polynomials of semi-invariant polynomials and the Castelnuovo--Mumford regularity of Gorenstein orbit closures (Section \ref{sec:last}).

As the group is acting with finitely many orbits, the category $\opmod_G(\D_X)$ of equivariant $\D$-modules is equivalent to the category of finite-dimensional representations of a quiver with relations (see \cite{vilonen} and \cite{catdmod}). We determine the quiver structure of the category of $\opmod_G(\D_X)$ (see Section \ref{sec:quivmod}). The quivers appear also in \cite{catdmod} and \cite{senary}, and have finitely many indecomposable representations that are described explicitly \cite[Theorem 2.11]{catdmod}. Again, only the case $(C_3, \omega_3)$ displays exceptional behavior as the equivariant $\D$-module corresponding to the trivial local system on the middle orbit is disconnected from the rest.

For any $G$-stable closed subset $Z$ in $X$, the local cohomology modules $H^i_Z(S)$ are $G$-equivariant coherent $\D$-modules, for all $i\geq 0$. While being objects of great interest, explicit computations of local cohomology modules are in general difficult. Several results been obtained for representations with finitely many orbits (see \cite{claudiu2,claudiu3,claudiu4,bindmod,lHorincz2018iterated,mike,mike2}). In spirit of these results, in Section \ref{sec:loccoh} we determine the explicit $\D$-module structure of local cohomology modules for our series. The results are uniform with respect to the parameter $m$, with some discrepancy in the case $(C_3,\omega_3)$ again.

The article is organized as follows. In Section \ref{sec:prelim}, we introduce the basic terminology regarding representations of reductive algebraic groups (Section \ref{sec:rep}), equivariant $\D$-modules (Section \ref{sec:equivd}), representation theory of quivers (Section \ref{sec:quiv}) and the Borel--Weil--Bott theorem (Section \ref{sec:bott}). In Section \ref{sec:cat} we describe explicitly the category equivariant $\D$-modules. We compute the $\D$-module (Section \ref{sec:simples}) and $G$-module (Section \ref{sec:character} and Theorem \ref{thm:D2}) structure of the simple objects explicitly, and we determine the quivers corresponding to the categories (Section \ref{sec:quivmod}). In Section \ref{sec:loccoh} we determine all the 
local cohomology modules with supports given by orbit closures. As an application of our main results, we calculate the intersection cohomology groups and Lyubeznik numbers of orbit closures (Section \ref{sec:lyubint}). In Section \ref{sec:last}, we establish a connection in a more general context between Castelnuovo--Mumford regularity of Gorenstein orbit closures and the roots of the Bernstein--Sato polynomials of semi-invariant polynomials.

\section{Preliminaries}\label{sec:prelim}

Throughout we use the Bourbaki notation for irreducible highest weight representations. The pair  $(G',X)$ denotes either $(C_3, \omega_3),\, (A_5,\omega_3),\, (D_6, \omega_5)$ or $(E_7, \omega_6)$, and we fix the parameter $m=1,2,4,8$ respectively.

\subsection{Representations and Characters} \label{sec:rep}

Let $\LL$ be the set of isomorphism classes of finite dimensional irreducible (rational) representations of the simple, simply connected algebraic group $G'$. We identify $\LL$ with the set of \defi{dominant} integral weights. We put $\omega_i$ for the $i$th fundamental representation of $G'$, and write $(a_1,a_2,\dots, a_k)$ for the highest weight of the irreducible $G'$-module $a_1 \omega_1 + a_2 \omega_2 + \dots + a_k \omega_k$, where $a_i\in \Z_{\geq 0}$ and $k$ is the rank of $G'$. The Cartan product of two irreducible $G'$-modules is $V_\lambda \cdot V_\mu := V_{\lambda+\mu}$, where $\lambda,\mu \in \LL$. 

We deal mostly with $\Z$-\defi{graded} $G'$-modules, that is, representations of the group $G=G' \times \C^*$. We keep track of the grading using a parameter $t$.  A rational $G$-representation (possibly infinite-dimensional) $M$ is \defi{admissible} if each irreducible $G$-representation appears (up to isomorphism) with finite multiplicity in $M$ (i.e. each isotypical component is finite-dimensional). Equivalently, any graded piece $M_d$ ($d\in \Z$) decomposes as a direct sum of $G'$-representations along the isotypical components
$$
M_d=\bo_{\ll \in \LL} V_\ll^{\oplus m^d_{\ll}(M)},
$$
with $m^d_{\ll}(M)\in \Z_{\geq 0}$. Also, we will use the notation 
\[[M]=\bigoplus_{d\in \Z} t^d M_d.\]
For $\ll \in \LL$ we can make sense of admissible $G$-representations (where $1$ denotes the trivial $G'$-module)
\[ \frac{1}{1-t V_\ll} = 1 + t V_{\ll} + t^2 V_{2\ll} + \cdots.\]

\subsection{Equivariant $\D$-modules} \label{sec:equivd}
A $\D_X$-module $M$ is (strongly) \defi{equivariant} if we have a $\D_{G\times X}$-isomorphism $\tau: p^*M \rightarrow m^*M$, where $p: G\times X\to X$ denotes the projection and $m: G\times X\to X$ the map defining the action, with $\tau$ satisfying the usual compatibility conditions (see \cite[Definition 11.5.2]{htt}). 

Another characterization of equivariant $\D$-modules is as follows. Let $\mf{g}$ denote the Lie algebra of $G$. Differentiating the $G$-action on $X$ induces a map from $\mf{g}$ to space of vector fields on $X$, hence a map $\mf{g} \to \D_X$.
Then the $\D$-module $M$ is equivariant if and only if it is endowed with an algebraic $G$-action, such that differentiating this action we recover the $\mf{g}$-action induced from the map $\mf{g} \to \D_X$. 

The category $\opmod_G(\D_X)$ of equivariant $\D$-modules is a full subcategory of the category $\opmod(\D_X)$ of all coherent $\D$-modules, and is closed under taking subquotients. If $Z$ is a $G$-stable closed subset of $X$, we denote by $\opmod_G^Z(\D_X)$ the full subcategory of $\opmod_G(\D_X)$ of equivariant $\D$-modules with support contained in $Z$.

In all our cases $G$ acts on $X$ with finitely many orbits. This implies that every module in $\opmod_G(\D_X)$ is regular and holonomic \cite[Theorem~11.6.1]{htt}. The category $\opmod_G(\D_X)$ is equivalent to the category of finite-dimensional representations of a quiver with relations (see \cite[Theorem 4.3]{vilonen} or  \cite[Theorem 3.4]{catdmod}; for quivers see Section \ref{sec:quiv}). For more details on categories of equivariant $\D$-modules, cf. \cite{catdmod}.

Given an equivariant map between two $G$-varieties, (derived) pushforward and pullback of $\D$-modules preserves equivariance. In particular, so do local cohomology functors $H^i_Z(\bullet)$, for $Z$ an orbit closure in $X$. Namely, for each $i\geq 0$ and each $M\in\opmod_G(\D_X)$, the $i$-th local cohomology module $H^i_Z(M)$ of $M$ with support in $Z$ is an element of $\opmod_G^Z(\D_X)$.

Since $G$ is reductive, another construction of objects in $\opmod_G(\D_X)$ comes from considering the (twisted) Fourier transform \cite[Section 4.3]{catdmod}. This functor gives a self-equivalence
\[\F : \opmod_G(\D_X) \xrightarrow{\sim} \opmod_G(\D_X).\]
For $M\in \opmod_G(\D_X)$ we have as $G$-modules
\begin{equation}\label{eq:fourier}
\F(M) \cong M^* \cdot  \det X^\ast.
\end{equation}
Throughout we work with the convention that polynomials live in non-negative degrees, and note that $X\cong X^\ast$ as $G'$-representations.

For a (rational, finite-dimensional) representation $V$ of $G$, we denote by $\mc{P}(V):= \D_X \oo_{U\lie} V$ the induced $G$-equivariant $\D_X$-module (see \cite[Section 2]{catdmod}), which is projective in $\opmod_G(\D_X$), and
\begin{equation}\label{eq:PVHom}
\Hom_{\D_X} (\mc{P}(V), M) = \Hom_{G}(V, M),
\end{equation}
for any $M \in \opmod_G(\D_X)$.

The action of $G$ on $X$ extends to an action on $S$ and the character of $S$ is given by (see \cite{brion} or \cite[Section 6]{series}):
\begin{equation}\label{eq:charS}
[S]=\frac{1}{(1-tX)(1-t^2 \lie')(1-t^3 X)(1-t^4)(1-t^4 X_4)},
\end{equation}
where $\lie' = 2\omega_1, \, \omega_1+\omega_5, \, \omega_2, \, \omega_1$ is the adjoint representation and $X_4=2 \omega_2, \, \omega_2+\omega_4, \, \omega_4, \, \omega_5$ for $(G',X)=(C_3,\omega_3), \, (A_5, \omega_3), \, (D_6, \omega_5), \, (E_7, \omega_6)$, respectively. 
For the simple $\D$-module $E=\F(S)$ with support equal to the origin, we have by (\ref{eq:fourier})):
\begin{equation}\label{eq:charE}
[E]=\frac{t^{-6m-8}}{(1-t^{-1}X)(1-t^{-2} \lie')(1-t^{-3} X)(1-t^{-4})(1-t^{-4} X_4)}.
\end{equation}

\subsection{Quivers}\label{sec:quiv}
 
We briefly introduce some basic notions on the representation theory of quivers, following \cite{ass-sim-sko}.  A \defi{quiver} $Q$ is an oriented graph, \emph{i.e.} a pair $Q=(Q_0,Q_1)$ formed by a finite set of vertices $Q_0$ and a finite set of arrows $Q_1$. An arrow $a\in Q_1$ has a head $ha$ and a tail $ta$ which are elements of $Q_0$:
\[\xymatrix{
ta \ar[r]^{a} & ha
}\]

A \defi{relation} in $Q$ is a linear combination of paths of length at least two that have the same source and target. We define a \defi{quiver} (with relations) $(Q,I)$ to be a quiver $Q$ together with a finite collection of relations $I$.

A \defi{representation} $M$ of a quiver $(Q,I)$ is a family of (finite-dimensional) vector spaces $\{M_x\,|\, x\in Q_0\}$ together with linear maps $\{M(a) : M_{ta}\to M_{ha}\, | \, a\in Q_1\}$ that satisfy the relations induced by $I$. A morphism $\phi:M\to N$ of two representations $M,N$ of $(Q,I)$ is a set of linear maps  $\phi = \{\phi(x) : M_x \to N_x\,| \,x\in Q_0\}$, such that for each $a\in Q_1$ we have $\phi(ha)\circ M(a)=N(a)\circ \phi(ta)$. The category $\rep(Q,I)$ of finite-dimensional representations of $(Q,I)$ is Artinian, has enough projectives and injectives, and contains only finitely many simple objects,that are in bijection with the vertices. For the projective cover (resp. injective envelope) of the simple corresponding to a vertex $x\in Q_0$, the dimension of its space at $y\in Q_0$ is given by the number of paths from $x$ to $y$ (resp.\ from $y$ to $x$), considered up to the relations in $I$ (see \cite[Section III.2]{ass-sim-sko}).

\subsection{Borel--Weil--Bott theorem}\label{sec:bott}

In this section, we present some special cases of the Borel--Weil--Bott theorem that we use in Section \ref{subsec:ring}. For more details, see \cite{jerzy}, and especially \cite[Chapters 4,5]{bastoneastwood}, as we will use these conventions. 

We denote the integral weights of a simple group $G$ with the corresponding root system (which in this paper will be $C_3$, $A_5$, $D_6$ or $E_7$), by labeling the Dynkin diagram by integers. Dominant weights correspond to labelings by nonnegative integers.
The weight $\rho$ (the half of sum of positive roots) corresponds to labeling all nodes by $1$.
The action of the Weyl group on the integral weights is described in \cite[Recipe 4.1.3]{bastoneastwood} which states the following.

\begin{prop} The simple reflection $\sigma_\alpha$ acts as follows. To compute $\sigma_\alpha (\lambda)$,
 let $c$ be the coefficient of the node of $X_n$ associated to $\alpha$. Add $c$ to the adjacent coefficients, with multiplicity if there is a multiple edge directed towards the adjacent node, and then replace $c$ by $-c$.
\end{prop}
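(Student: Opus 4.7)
The proposition is essentially a direct unfolding of the standard formula for a simple reflection on the weight lattice, translated into the coefficient labeling of the Dynkin diagram. My plan is to reduce it to one line of Cartan matrix bookkeeping, with the only subtlety being the convention on arrow direction for multiple edges.

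First, I would recall that for a simple root $\alpha$ and any weight $\lambda$,
\[
\sigma_\alpha(\lambda) = \lambda - \langle \lambda, \alpha^\vee \rangle\, \alpha,
\]
where $\alpha^\vee = 2\alpha/(\alpha,\alpha)$ is the coroot. Writing $\lambda = \sum_i c_i \omega_i$ in the fundamental weight basis and using the defining pairing $\langle \omega_i, \alpha_j^\vee\rangle = \delta_{ij}$, I immediately get $\langle \lambda, \alpha_k^\vee\rangle = c_k$. So if $\alpha = \alpha_k$ and $c := c_k$, the task reduces to computing $\lambda - c\,\alpha_k$ in the $\{\omega_i\}$ basis.

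Second, I would expand $\alpha_k$ in the basis of fundamental weights using the Cartan matrix: $\alpha_k = \sum_i C_{ik}\,\omega_i$ with $C_{ik} = \langle \alpha_k, \alpha_i^\vee\rangle = 2(\alpha_k,\alpha_i)/(\alpha_i,\alpha_i)$. Then
\[
\sigma_{\alpha_k}(\lambda) = \sum_i (c_i - c\, C_{ik})\,\omega_i.
\]
The diagonal entry $C_{kk} = 2$ gives $c_k \mapsto c_k - 2c = -c$, which is exactly the "replace $c$ by $-c$" step. For a non-adjacent node $i$ one has $C_{ik}=0$ and the coefficient is unchanged.

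Third, I would inspect the adjacent nodes. For a simple edge between $\alpha_i$ and $\alpha_k$ one has $C_{ik}=-1$, so $c_i \mapsto c_i + c$, matching "add $c$ to the adjacent coefficient". For a multiple edge, the formula $C_{ik} = 2(\alpha_k,\alpha_i)/(\alpha_i,\alpha_i)$ gives $C_{ik} = -2$ precisely when $\alpha_i$ is strictly shorter than $\alpha_k$; in the Bourbaki convention, the arrow then points from the long root $\alpha_k$ toward the short root $\alpha_i$, i.e.\ toward the adjacent node. In that case one adds $-c\cdot(-2) = 2c$, accounting for the multiplicity. When the arrow instead points toward $\alpha_k$, the adjacent $\alpha_i$ is the long root, $C_{ik}=-1$, and only $c$ is added. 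This matches the stated recipe in every subcase.

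The only real thing to be careful about is the arrow-direction convention, so the main "obstacle" is simply verifying that the multiplicity rule as stated corresponds to arrows pointing from $\alpha_k$ to its shorter neighbor; apart from that the proof is a one-line consequence of the definitions. A verification on one mixed example such as $C_3$ with the ordering fixed in the introduction (where applying $\sigma_{\alpha_3}$ to $\sum c_i\omega_i$ gives $\alpha_3 = -2\omega_2 + 2\omega_3$, hence $c_2 \mapsto c_2 + 2c_3$ and $c_3 \mapsto -c_3$) can be included as a sanity check.
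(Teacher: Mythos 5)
Your proof is correct and is precisely the standard verification of this recipe: reduce $\sigma_{\alpha_k}(\lambda)=\lambda-c_k\alpha_k$ to the expansion $\alpha_k=\sum_i C_{ik}\,\omega_i$, and read off the off-diagonal Cartan entries $0,-1,-2,-3$, with the case $C_{ik}=-2$ (or $-3$) occurring exactly when $\alpha_i$ is short, i.e.\ when the Bourbaki arrow points from the reflection node $k$ toward the adjacent node $i$. The paper does not actually prove this proposition (it simply quotes Recipe 4.1.3 of Baston--Eastwood and illustrates it by example), so there is no alternative argument to contrast with; your Cartan-matrix derivation, including the $C_3$ sanity check, is exactly the argument one would supply.
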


\begin{example} Here we reproduce \cite[Example 4.1.4]{bastoneastwood}. In each case we reflect at the middle node:
\begin{align}
\xymatrix@1{\stackrel{a}{\bullet} \ar@{-}[r] & \stackrel{b}{\bullet} \ar@{-} [r] & \stackrel{c}{\bullet}}   \quad \Longrightarrow \quad &\xymatrix@1{\stackrel{a+b}{\bullet} \ar@{-}[r] & \stackrel{-b}{\bullet} \ar@{-} [r] & \stackrel{b+c}{\bullet}} \nonumber \\
\xymatrix@1{\stackrel{a}{\bullet} \ar@{-}[r] & \stackrel{b}{\bullet} \ar@{=}[r] |{\langle} & \stackrel{c}{\bullet}}   \quad \Longrightarrow \quad &\xymatrix@1{\stackrel{a+b}{\bullet} \ar@{-}[r] & \stackrel{-b}{\bullet} \ar@{=}[r] |{\langle} & \stackrel{b+c}{\bullet}}  \nonumber \\
\xymatrix@1{\stackrel{a}{\bullet} \ar@{-}[r] & \stackrel{b}{\bullet} \ar@{=}[r] |{\rangle} & \stackrel{c}{\bullet}}   \quad \Longrightarrow \quad &\xymatrix@1{\stackrel{a+b}{\bullet} \ar@{-}[r] & \stackrel{-b}{\bullet} \ar@{=}[r] |{\rangle} & \stackrel{2b+c}{\bullet}} \nonumber
\end{align}
\end{example}

Next we define the affine action of the Weyl group on weights via
$$w^.\lambda := w(\lambda+\rho)-\rho.$$

Recall that the weight $\lambda$ is called \defi{singular} if there exists a nontrivial $w\in W$ such that $w^.\lambda=\lambda$.
For a non-singular weight $\lambda$ there exists a unique $w\in W$ such that $w^.\lambda$ is dominant.

Now we are ready to state the Borel--Weil--Bott theorem.
Recall that parabolic subgroups $P$ of $G$ (up to conjugation) correspond to subsets of simple roots. 
Let us fix $G$ and $P$. The weight $\lambda$ is dominant with respect to $P$ if it is dominant when restricted to the Levi factor $L(P)$.
This means that all the labelings of the simple roots that are in $L(P)$ are nonnegative.

The homogeneous vector bundles on $G/P$ correspond to rational $P$-modules. To a $P$-dominant $\lambda$ we associate the homogeneous bundle ${\mathcal V}(\lambda)$
which is irreducible, i.e. the unipotent radical of $P$ acts on it trivially and with the action of $L(P)$ it is the dual of the highest weight $L(P)$-module coresponding to the restriction of $\lambda$.
The Borel--Weil--Bott theorem calculates the sheaf cohomology of such bundles.

\begin{theorem}\label{thm:bott} Let $G$ be a simply connected complex semisimple Lie group, and $P\subset G$ a parabolic subgroup.
Suppose $\lambda$ is an integral weight for $G$ and dominant with respect to $P$. Consider the homogeneous bundle ${\mathcal V}(\lambda)$ on $G/P$.
Then
\begin{enumerate}
\item If $\lambda$ is singular for the affine Weyl group action, then
$$H^r(G/P, {\mathcal V}(\lambda))=0$$
for all $r$.
\item If $\lambda$ is nonsingular for the affine Weyl group action, then as a representation of $G$,
$$H^{l(w)}(G/P, {\mathcal V}(\lambda))=V_{w^.\lambda}^*,$$
where $w\in W$ is the unique element for which $w^.\lambda$ is dominant. All other cohomology vanishes.
\end{enumerate}
\end{theorem}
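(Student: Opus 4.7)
The plan is to reduce first to the Borel case $P=B$ and then handle $G/B$ by induction using $\mathbb{P}^1$-fibrations. The projection $\pi\colon G/B\to G/P$ has fibers isomorphic to $P/B$, which is the full flag variety of the Levi factor $L(P)$. Since $\lambda$ is $P$-dominant by assumption, it restricts to a dominant weight for $L(P)$, and applying the classical Borel--Weil theorem along the fibers identifies $\pi_\ast\mathcal{L}_\lambda$ with $\mathcal{V}(\lambda)$ while making all higher derived pushforwards vanish. The Leray spectral sequence then reduces the computation of $H^\bullet(G/P,\mathcal{V}(\lambda))$ to $H^\bullet(G/B,\mathcal{L}_\lambda)$, and since both the length function $l(w)$ and the affine action $w^.\lambda$ are intrinsic to the weight, the statement for general $P$ follows from the one for $B$.

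For $G/B$ I would proceed by induction on the minimal number of simple reflections needed to transport $\lambda$ to the dominant chamber under the affine action. For each simple root $\alpha$, the projection $p_\alpha\colon G/B\to G/P_\alpha$ onto the corresponding minimal parabolic is a $\mathbb{P}^1$-bundle, and $\mathcal{L}_\lambda$ restricts on each fiber to $\mathcal{O}_{\mathbb{P}^1}(n)$ with $n=\langle\lambda,\alpha^\vee\rangle$. Three cases arise: if $n\geq 0$, then $R^1 p_{\alpha,\ast}\mathcal{L}_\lambda=0$ and $p_{\alpha,\ast}\mathcal{L}_\lambda$ is a homogeneous bundle on $G/P_\alpha$; if $n=-1$, both direct images vanish, forcing $H^r(G/B,\mathcal{L}_\lambda)=0$ for all $r$; if $n\leq -2$, fiberwise Serre duality combined with the twist by the relative dualizing sheaf yields $p_{\alpha,\ast}\mathcal{L}_\lambda=0$ and $R^1 p_{\alpha,\ast}\mathcal{L}_\lambda=\mathcal{L}_{s_\alpha^.\lambda}$ (on $G/P_\alpha$), so that the Leray spectral sequence produces a length-shifting isomorphism $H^r(G/B,\mathcal{L}_\lambda)\cong H^{r-1}(G/B,\mathcal{L}_{s_\alpha^.\lambda})$.

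The conclusion is then a bookkeeping exercise. If $\lambda$ is nonsingular, one can pick at each stage a simple root $\alpha$ with $\langle\lambda+\rho,\alpha^\vee\rangle<0$, apply the shift above, and terminate when the weight becomes dominant; the number of steps is exactly $l(w)$ by standard Weyl-group combinatorics, and the classical Borel--Weil theorem contributes the single nonvanishing cohomology $H^0(G/B,\mathcal{L}_{w^.\lambda})=V_{w^.\lambda}^\ast$ at the final step, yielding the formula. If $\lambda$ is singular, then any such reduction is forced to encounter a weight $\mu$ with $\langle\mu+\rho,\alpha^\vee\rangle=0$, i.e.\ $n=-1$, which triggers total vanishing and propagates back through the chain of isomorphisms.

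The main obstacle will be the identification $R^1 p_{\alpha,\ast}\mathcal{L}_\lambda\cong \mathcal{L}_{s_\alpha^.\lambda}$ in the case $n\leq -2$, since this is precisely where the affine shift by $\rho$ enters the picture. One needs to combine fiberwise Serre duality (which produces a twist by the relative canonical, hence by $\alpha$ up to signs) with the combinatorial identity $s_\alpha(\lambda+\rho)-\rho=s_\alpha(\lambda)-\alpha$ and check that the resulting $B$-character matches $s_\alpha^.\lambda$ as defined. Everything else -- the reduction from $P$ to $B$, the length/cohomological-degree bookkeeping, and the dominant base case -- is essentially formal once this local geometric step is in hand.
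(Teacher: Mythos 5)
The paper does not prove Theorem~\ref{thm:bott}: it is quoted as a classical result, with references to \cite{jerzy} and \cite[Chapters 4,5]{bastoneastwood}. There is therefore no paper proof to compare against, and your proposal has to be evaluated on its own.

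Your outline is the standard Demazure/Kempf proof via $\mathbb{P}^1$-fibrations, and as such the overall strategy is sound: the reduction from $P$ to $B$ via the relative Borel--Weil theorem and the Leray spectral sequence for $\pi\colon G/B\to G/P$ is correct, the trichotomy $n\geq 0$, $n=-1$, $n\leq -2$ is correct, and the induction on $l(w)$ with the base case given by Borel--Weil is correct, as is the mechanism by which singularity of $\lambda+\rho$ forces a step with $n=-1$ and thus total vanishing. However, the intermediate statement you write in the $n\leq -2$ case is not literally correct: $R^1 p_{\alpha,\ast}\mathcal{L}_\lambda$ is a vector bundle on $G/P_\alpha$ of rank $-n-1\geq 1$, not the line bundle $\mathcal{L}_{s_\alpha^.\lambda}$ (which lives on $G/B$, not on $G/P_\alpha$). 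The assertion you want is the isomorphism of $G$-equivariant sheaves $R^1 p_{\alpha,\ast}\mathcal{L}_\lambda\cong p_{\alpha,\ast}\mathcal{L}_{s_\alpha^.\lambda}$ on $G/P_\alpha$, from which the length-shifting isomorphism $H^r(G/B,\mathcal{L}_\lambda)\cong H^{r-1}(G/B,\mathcal{L}_{s_\alpha^.\lambda})$ then follows by applying Leray on both sides. You correctly flag this identification as the crux, but the justification you sketch (``fiberwise Serre duality plus the combinatorial identity'') needs one more ingredient to actually close: relative Serre duality gives $R^1 p_{\alpha,\ast}\mathcal{L}_\lambda\cong\bigl(p_{\alpha,\ast}(\mathcal{L}_\lambda^{-1}\otimes\omega_{p_\alpha})\bigr)^\vee$, and what must be checked is that this $P_\alpha$-module (not just its $T$-character) agrees with the one underlying $p_{\alpha,\ast}\mathcal{L}_{s_\alpha^.\lambda}$. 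In characteristic zero this holds because both are irreducible $L(P_\alpha)$-modules (by complete reducibility for the Levi) with the unipotent radical acting trivially, and such modules are determined by their highest weight, which the identity $s_\alpha^.\lambda=s_\alpha\lambda-\alpha$ verifies. This is precisely the step that fails in positive characteristic, where Borel--Weil--Bott is false, so it cannot be dismissed as bookkeeping. With that correction made explicit, the argument is complete.
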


We finish this section with some examples.

\begin{example} Let us consider the root system of type $C_3$. Calculate cohomology of the bundle corresponding to the weight $3\omega_1-3\omega_3$
This corresponds to the sequence $(3,0,-3)$, After adding $\rho$ we get the weight $(4,1,-2)$. Applying the reflection at the 3-rd vertex we get 
$(4, -3,2)$. Next we apply the reflection on the second vertex to get $(1,3,-1)$ and then after applying the reflection at the third vertex, we get $(1,1,1)$.
Subtracting $\rho$ we get $(0,0,0)$. Hence, the third cohomology of the bundle is a trivial representation, and zero in all other degrees.
\end{example}

\begin{example} Let us consider the root system of type $D_6$. Calculate cohomology of the bundle corresponding to the dual of the $9$-th symmetric power
of the universal quotient bundle $Q_1$ on the corresponding isotropic Grassmannian.
This corresponds to the weight $(-9,0,0,0,0,0)$. Adding $\rho$ we get $(-8,1,1,1,1,1)$. Applying corresponding reflections we get in turn:
$(8,-7,1,1,1,1)$, $(1,7,-6,1,1,1)$, $(1,1,6,-5,1,1)$, $(1,1,1,5,-4,-4)$, $(1,1,1,1,4,-4)$, $(1,1,1,-3,4,4)$, $(1,1,-2,3,1,1)$, $(1,-1,2,1,1,1)$, $(0,1,1,1,1,1)$. This shows that the weight $(-9,0,0,0,0,0)$ is singular, as a zero appears in the algorithm (the corresponding weight is fixed under reflection at the zero node). This shows that all cohomology groups of our bundle are zero.
\end{example}

\begin{example}
Let us consider the system of type $E_7$. Consider the bundle which is the $a$-th multiple of the dual of the 6-th fundamental representation, with $a\ge 10$.
Our weight is $(0,0,0,0,0,-a,0)$ and the above algorithm shows that the $14$-th cohomology of our bundle is the representation with highest weight $(a-10)\omega_6+(a-10)\omega_7$.
All other cohomology groups are zero. 
\end{example}

\section{The category of equivariant coherent $\D$-modules}\label{sec:cat}

As usual, the pair  $(G',X)$ denotes either $(C_3, \omega_3),\, (A_5, \omega_3), \, (D_6, \omega_5)$ or $(E_7, \omega_6)$, and recall that $m=1,2,4,8$, respectively. Whenever possible, we discuss the properties of the action of $G=G'\times \C^*$ on $X$ in a uniform matter within these four cases.

We have $\dim X = 6m+8$, and $G$ acts on $X$ with five orbits. Accordingly, we write $X= \bigcup_{i=0}^4 O_i $ with $\ol{O}_{i-1}\subset \ol{O}_i$ (where $1\leq i \leq 4$), where $O_0=\{0\}$ and $\ol{O}_4=X$. The hypersurface $\ol{O}_3$ defined by the vanishing of a $G'$-invariant polynomial $f$ of degree $4$ (unique, up to scalar). The codimensions of $\ol{O}_2$ and $\ol{O}_1$ are $m+3$ and $3m+4$, and their defining ideals are generated by $X$ (in degree 3) and $\lie'$ (in degree 2)  from (\ref{eq:charS}), respectively. More details can be found in \cite{saki}, \cite[Section 7]{igusa} and \cite[Exercise 7.17]{jerzy}.

For an orbit $O \subset X$, we denote by $O^\vee \subset X$ its projective (Pyasetskii) dual orbit (see \cite[Section 4.3]{catdmod}). By \cite{knopmenzel}, we have
\begin{equation}\label{eq:pya}
 O_i^\vee = O_{4-i}, \mbox{ for } i=0,\dots, 4.
\end{equation}

\subsection{Fundamental groups of orbits}\label{subsec:funds}

Given a $G$-orbit $O \cong G/H$ of $X$, we call the finite group $H/H^0$ the \defi{component group of} $O$ (here $H^0$ stands for the connected component of $H$ containing the identity). If $O$ is simply connected, then its component group is trivial. The orbits $O_1,O_2,O_3$ are also $G'$-orbits. Note that since $G'$ is simply connected, the fundamental groups of $O_1,O_2,O_3$ are isomorphic to their component groups under the action of $G'$. We proceed by determining the fundamental and component groups of all orbits.

\begin{lemma}\label{lem:piO1} The orbit $O_1$ is simply connected.
\end{lemma}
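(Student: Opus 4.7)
The plan is to realize $O_1$ as the orbit of a highest weight vector and exploit the resulting fibration. Since $\ol{O}_1$ has codimension $3m+4$ and is cut out by the adjoint representation $\lie'$ in degree $2$, it coincides (by Kostant's description of the quadratic equations for $G/P$) with the affine cone over a closed $G'$-orbit $Y \sub \mathbb{P}(X)$. Writing $v \in X$ for a highest weight vector, one has $Y = G' \cdot [v] \cong G'/P_k$, where $P_k$ is the maximal parabolic of $G'$ associated to the fundamental weight $\om_k$ defining $X$ (so $k = 3, 3, 5, 6$ respectively). A dimension count against $\dim O_1 = \dim X - (3m+4)$ confirms this identification in each of the four cases, and therefore $O_1 = G \cdot v \cong G/H$ with $H = \mathrm{Stab}_G(v)$.

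My next step is to identify $H$ with $P_k$. Let $\chi: P_k \to \C^*$ be the character through which $P_k$ acts on the line $\C v$. Then
\[
H = \{(g, \chi(g)^{-1}) : g \in P_k\},
\]
and $g \mapsto (g, \chi(g)^{-1})$ is an isomorphism of algebraic groups $P_k \xrightarrow{\sim} H$. In particular $H$ is connected, since parabolic subgroups of connected groups are connected. The long exact sequence of homotopy groups for the fibration $H \to G \to O_1$ then reads
\[
\pi_1(H) \to \pi_1(G) \to \pi_1(O_1) \to \pi_0(H) = 1,
\]
and since $G'$ is simply connected, $\pi_1(G) = \pi_1(\C^*) = \Z$.

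It thus remains to verify that the induced map $\pi_1(H) \cong \pi_1(P_k) \xrightarrow{\chi^{-1}} \pi_1(\C^*)$ is surjective. The fundamental coweight $\om_k^\vee : \C^* \to P_k$ takes values in the centre of the Levi $L_k \sub P_k$ and provides a loop in $H$ whose image in $\C^*$ is $t \mapsto t^{-\la \om_k, \om_k^\vee \ra} = t^{-1}$, which generates $\pi_1(\C^*)$. Hence the map is surjective and $\pi_1(O_1) = 1$.

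The main obstacle, modest as it is, is the initial identification of $\ol{O}_1$ with the affine cone over $G'/P_k$ simultaneously in all four cases; once this geometric input is granted, the remainder of the argument is a uniform and essentially formal computation with the long exact sequence and the pairing $\la \om_k, \om_k^\vee \ra = 1$, requiring no case-by-case analysis.
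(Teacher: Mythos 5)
Your approach is a direct homotopy-theoretic argument, whereas the paper simply cites a general lemma from \cite{catdmod} (that the highest-weight orbit of $G'\times\C^*$ is simply connected when $G'$ is simply connected) and, alternatively, the explicit connectedness of the stabilizer from \cite{igusa}. Your strategy --- identifying $H=\operatorname{Stab}_G(v)$ with $P_k$ via the graph of $\chi^{-1}$ and then running the long exact sequence --- is sound and is essentially a proof of the cited lemma, so it buys self-containedness.

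However, the final step contains a genuine error. You claim that $\omega_k^\vee:\C^*\to P_k$ is a cocharacter landing in $Z(L_k)$ and that $\langle\omega_k,\omega_k^\vee\rangle=1$. Neither is correct here. Since $G'$ is \emph{simply connected}, the cocharacter lattice of $T$ is the coroot lattice $Q^\vee$, not the coweight lattice $P^\vee$, and the fundamental coweight $\omega_k^\vee$ lies in $Q^\vee$ only when it is an integer combination of simple coroots. Writing $\omega_k^\vee=\sum_i (A^{-1})_{ik}\,\alpha_i^\vee$ with $A$ the Cartan matrix, one sees this fails in all four cases: for instance for $C_3$, $k=3$ one has $(A^{-1})_{33}=3/2$, and for $A_5$, $k=3$ one has $(A^{-1})_{13}=1/2$. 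Moreover $\langle\omega_k,\omega_k^\vee\rangle=(A^{-1})_{kk}$, which equals $3/2$ rather than $1$ for $C_3$ and $A_5$; so even formally the claimed winding number is wrong. The correct substitute is the simple coroot $\alpha_k^\vee$, which \emph{is} a cocharacter of $T$ (hence of $P_k$) and satisfies $\langle\omega_k,\alpha_k^\vee\rangle=1$ by the very definition of the fundamental weight $\omega_k$. Since $\chi|_T=\omega_k$, the loop $t\mapsto(\alpha_k^\vee(t),t^{-1})$ in $H$ projects to a generator of $\pi_1(\C^*)$, and the surjectivity of $\pi_1(H)\to\pi_1(G)$ follows. (Note $\alpha_k^\vee$ does not take values in $Z(L_k)$, so you must abandon that part of the framing; it is not needed.) With this replacement your argument is complete and uniform in the four cases.
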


\begin{proof}
Since $O_1$ is the orbit of the highest weight vector, this follows from \cite[Lemma 4.13]{catdmod}. Alternatively, this also follows from the explicit determination of the generic $G'$-stabilizer of $O_1$ \cite[Lemma 16]{igusa}, which is shown to be connected in  \textit{ibid.}, p. 426.
\end{proof}

The following is the first indication that the case $(C_3,\omega_3)$ is somewhat different from the rest of the cases.

\begin{lemma}\label{lem:piO2} The orbit $O_2$ is simply connected, except for $(G',X)=(C_3,\omega_3)$ when the component group equals $\pi_1 (O_2) = \Z/ 2\Z$.
\end{lemma}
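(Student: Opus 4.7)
The plan is to compute the $G'$-stabilizer $H$ of a point in $O_2$ in each of the four cases and apply the observation made just before the statement: since $G'$ is simply connected and $O_2 = G'/H$, the long exact homotopy sequence of the principal $H$-bundle $G' \to G'/H$ identifies $\pi_1(O_2) \cong H/H^0$. So the whole problem reduces to deciding whether the stabilizer is connected.

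For the three cases $(A_5,\omega_3)$, $(D_6,\omega_5)$ and $(E_7,\omega_6)$, I would extract the stabilizer from the general classification of prehomogeneous vector spaces carried out in \cite{saki} and \cite[Section 7]{igusa} (and, for $(A_5, \omega_3)$, also from \cite{senary}). In each of these three cases, the stabilizer of a point in $O_2$ is, up to a unipotent radical, a connected reductive subgroup of $G'$; in particular $H$ is connected and $\pi_1(O_2)=1$.

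The case $(C_3,\omega_3)$ requires a hands-on computation. I would realize $V_{\omega_3}$ as the space of primitive $3$-forms $\ker(\bw^3\C^6 \to \C^6)$ on the symplectic space $(\C^6,\omega)$, fix a symplectic basis, and pick a convenient representative $v \in O_2$. A direct computation of the annihilator equations $Y\cdot v=0$ inside $\mathfrak{sp}_6$ should cut out a Lie subalgebra $\mathfrak{h}$ of the expected dimension $\dim\mathfrak{sp}_6 - \dim O_2 = 21-10 = 11$, pinning down $H^0$. To uncover the extra component I would exhibit a symplectic involution $\sigma \in \Sp_6$ that swaps two complementary Lagrangian subspaces naturally associated to the chosen representative and check that it fixes $v$ on the nose (not merely projectively) while lying outside $H^0$; this yields $H/H^0 \cong \Z/2\Z$.

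The hard part is the $(C_3,\omega_3)$ case: the representative $v$ and the candidate involution $\sigma$ must be chosen compatibly, so that $\sigma$ actually preserves both the symplectic form and $v$ itself (getting the signs right is the delicate step), and so that one can be sure the computed Lie algebra $\mathfrak{h}$ really is the Lie algebra of the full stabilizer. In the other three cases the argument should reduce to bookkeeping from the known orbit data, the absence of an additional component being visible from the Levi factor of the stabilizer already being connected.
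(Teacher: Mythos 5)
Your reduction to the stabilizer via the long exact homotopy sequence is exactly the setup the paper uses (it is stated in Section~\ref{subsec:funds}, right before the lemma), and for the three cases $(A_5,\omega_3)$, $(D_6,\omega_5)$, $(E_7,\omega_6)$ deferring to the known orbit data in \cite{saki} and \cite{igusa} is the same move the paper makes --- it cites \cite[Lemma~17]{igusa} for all four cases. For $(C_3,\omega_3)$ your plan to recompute the stabilizer of an explicit representative $v\in O_2$ inside $\Sp_6$ would, if carried out, reproduce Igusa's calculation; but as written it is a sketch with the essential steps (the representative $v$, the Lie algebra $\mathfrak{h}$, the involution $\sigma$, and the verification that $\sigma\notin H^0$) all deferred, and you acknowledge this yourself. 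So at minimum the argument is incomplete.

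There is also a genuine conceptual gap that your proposal does not address, and which the paper handles explicitly. You compute the $G'$-stabilizer $H'$ and conclude $\pi_1(O_2)\cong H'/(H')^0$, which is correct because $G'$ is simply connected. But the ``component group'' in the lemma is by definition $H/H^0$ for the $G$-stabilizer $H$, where $G=G'\times\C^*$; this is what controls irreducible equivariant local systems and hence simple equivariant $\D$-modules. Since $\pi_1(G)\cong\Z$ is nontrivial, the long exact sequence only gives a surjection $\pi_1(O_2)\twoheadrightarrow H/H^0$, so $\pi_1(O_2)=\Z/2\Z$ leaves open the possibility $H/H^0=1$ (the paper writes precisely this: ``the component group ... can be either trivial or $\Z/2\Z$''). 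Exhibiting $\sigma\in H'$ with $\sigma\notin(H')^0$ does not by itself settle whether $\sigma\in H^0$, because $H^0\cap H'$ could be all of $H'$ --- one needs some extra input. The paper closes this gap with a clean structural argument: there exist two non-isomorphic simple equivariant $\D$-modules with support $\ol{O}_2$, namely $H^4_{\ol{O}_2}(S)$ and $\D f^{-2}/\D f^{-1}$, which forces the $G$-component group to be nontrivial. Your proposal would need either to add such an argument, or to redo the stabilizer computation at the level of $G$ (computing $\mathfrak{h}=\operatorname{Lie}(H)$ and showing $\sigma\notin H^0$ directly), to actually prove the statement.
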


\begin{proof}
The claim about the fundamental groups follows from \cite[Lemma 17]{igusa}. Hence, the component group (under the action of $G$) can be either trivial or $\Z/2\Z$. To show that the latter holds, one can either follow through the computations in  \cite[Lemma 17]{igusa}, or use the fact that we have two non-isomorphic simple equivariant $\D$-modules with support $\ol{O}_2$, namely $H^4_{\ol{O}_2}(S)$ and $\D f^{-2}/ \D f^{-1}$ (see Theorem \ref{thm:simples} (b) and Section \ref{sec:loccoh}).
\end{proof}

\begin{lemma}\label{lem:piO3}
The orbit $O_3$ is simply connected.
\end{lemma}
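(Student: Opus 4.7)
The plan is to show that a generic isotropy group $H \subset G'$ of a point in $O_3$ is connected. Since $G'$ is simply connected and $O_3 \cong G'/H$, this yields $\pi_1(O_3) = H/H^0 = 1$, and also forces the $G$-component group of $O_3$ to be trivial (as $G = G' \times \C^*$ and the extra $\C^*$ factor is connected).

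First, I would recall that $O_3$ is the open $G'$-orbit on the irreducible quartic hypersurface $V(f) \subset X$, that is, the complement $V(f) \setminus \overline{O}_2$ of the lower orbit closure. This is precisely the situation analyzed in Igusa's study of subexceptional prehomogeneous vector spaces. The proof would then proceed exactly as in Lemmas \ref{lem:piO1} and \ref{lem:piO2}: invoke the explicit generic stabilizer computations of \cite[Section 7]{igusa} for a point of $V(f)$ in general position, and observe that in each of the four cases $(C_3,\omega_3),\,(A_5,\omega_3),\,(D_6,\omega_5),\,(E_7,\omega_6)$ the stabilizer comes out to be a connected reductive subgroup of $G'$ whose dimension agrees with $\dim G' - (6m+7)$.

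As an alternative uniform sanity check, one can argue from the $\D$-module side via the Riemann--Hilbert correspondence: irreducible $G$-equivariant local systems on $O_3$ correspond bijectively to simple objects of $\opmod_G(\D_X)$ with support exactly $\overline{O}_3$. If the construction of simples in (the forthcoming) Theorem \ref{thm:simples} produces a unique such simple, namely the intersection cohomology extension $L(\overline{O}_3,\C)$, then $\pi_1(O_3)$ has only the trivial irreducible representation and must itself be trivial. This mirrors the converse argument used in Lemma \ref{lem:piO2} to detect the extra $\Z/2\Z$ in the $C_3$ case.

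The main obstacle is to do the verification uniformly: the $(C_3,\omega_3)$ case at $O_2$ warns that the behavior of fundamental groups need not be uniform across the series, so one cannot simply cite the largest case and let symmetry take care of the rest. My preferred route is therefore the stabilizer citation, supported by the observation that the dimension of $H$ leaves no room for a nontrivial finite quotient in any of the four cases. The $\D$-module alternative is cleaner to state but risks a circular dependence on the explicit classification of simples, which is precisely where the information about $\pi_1(O_3)$ would have been fed in.
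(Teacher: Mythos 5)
Your proposal matches the paper's argument, which is a one-line citation of \cite[Lemma 15]{igusa}, i.e.\ precisely the generic-stabilizer computation you invoke (compare Lemmas 16 and 17 of the same source, used in the paper for $O_1$ and $O_2$ respectively). One minor caution: the dimension count you offer as support cannot by itself exclude a nontrivial finite component group, so connectedness has to be read off directly from Igusa's explicit description of the stabilizer, as your primary route does; you are also right that the $\D$-module alternative would be circular here.
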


\begin{proof}
This follows from \cite[Lemma 15]{igusa}.
\end{proof}

\begin{lemma}\label{lem:ratO3}
The variety $\ol{O}_3$ is normal with rational singularities.
\end{lemma}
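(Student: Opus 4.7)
My plan is to handle normality and rational singularities separately, exploiting that $\ol{O}_3 = V(f)$ is a hypersurface inside the smooth ambient space $X$.

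\textbf{Normality.} Since $\ol{O}_3$ is cut out by the single polynomial $f$, it is a complete intersection and hence Cohen--Macaulay, so by Serre's criterion it suffices to verify $R_1$. The singular locus of $\ol{O}_3$ is $G$-stable and, because $O_3$ is smooth (as a homogeneous space), must be contained in $\ol{O}_3 \setminus O_3 = \ol{O}_2$. Using $\codim_X \ol{O}_2 = m+3$ recalled at the start of Section~\ref{sec:cat}, the singular locus has codimension at least $m+2 \geq 3$ in $\ol{O}_3$, which is well beyond the $R_1$ threshold, so normality follows.

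\textbf{Rational singularities.} I would produce a $G$-equivariant desingularization $\pi : \tilde{Z} \to \ol{O}_3$ by the Kempf--Weyman collapsing construction: choose a parabolic $P \subset G'$ and a $P$-subrepresentation $V \subset X$ such that the total space $\tilde{Z} = G' \times_{P} V$, viewed as a subbundle of the trivial bundle $G'/P \times X$, maps under the second projection onto $\ol{O}_3$ and is an isomorphism over $O_3$. Such data should be available uniformly within the series, obtained by removing an appropriate node from each of the Dynkin diagrams of Section~\ref{sec:intro}, and $\tilde{Z}$ is automatically smooth as a vector bundle over $G'/P$. Since we have already shown $\ol{O}_3$ is normal, Kempf's criterion reduces rational singularities to the vanishing $R^i \pi_* \O_{\tilde{Z}} = 0$ for all $i > 0$. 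By the Kempf--Weyman recipe \cite[Ch.~5]{jerzy}, these higher direct images are extracted from the cohomologies of homogeneous bundles of the form $\Sym^{\bullet}((X/V)^{*})$ on $G'/P$, and each such cohomology is handled by the Borel--Weil--Bott theorem (Theorem~\ref{thm:bott}).

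The main obstacle will be carrying out the Borel--Weil--Bott analysis uniformly across the four Dynkin types and checking the expected vanishing for every symmetric power twist; the computation has to be done on each diagram separately, even though the answer is expected to be uniform in $m$. As a fallback, rational singularities of the hypersurface $V(f)$ is equivalent, by Saito's criterion, to the reduced Bernstein--Sato polynomial $b_f(s)/(s+1)$ having all of its roots strictly greater than $-1$; the explicit $b$-functions of the four semi-invariants in question are tabulated in \cite{kimu}, and this route bypasses the resolution entirely at the cost of relying on case-by-case $b$-function data rather than a uniform geometric argument.
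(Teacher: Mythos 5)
Your fallback route is exactly the proof the paper gives: it invokes Saito's criterion \cite[Theorem 0.4]{saito} together with the known roots of $b_f$ from (\ref{eq:roots}). However, you state the criterion with the inequality reversed: rational singularities of the reduced hypersurface $V(f)$ are equivalent to $b_f(s)/(s+1)$ having no roots $\geq -1$, i.e.\ all roots strictly \emph{less} than $-1$, not ``strictly greater than $-1$'' as you wrote. (The roots $r_1, r_2, r_3 = -\tfrac{m+3}{2}, -\tfrac{2m+3}{2}, -\tfrac{3m+4}{2}$ do satisfy the correct inequality, so the conclusion is unaffected, but the criterion as stated would be vacuous since these roots are negative.) Your separate normality argument via Serre's criterion is correct but redundant: rational singularities in the sense of Saito's theorem already subsumes normality, so once the $b$-function check is done there is nothing left to verify. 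Your primary route via a Kempf--Weyman collapsing and Borel--Weil--Bott is genuinely different from the paper's proof of this lemma; the paper does construct exactly such a desingularization $Z = \Tot(\eta^\ast) \to \ol{O}_3$ in Section~\ref{subsec:ring}, but only later, to compute local cohomology, and prefers the $b$-function shortcut here precisely because it avoids the case-by-case Bott calculations you flag as the main obstacle.
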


\begin{proof}
This follows by \cite[Theorem 0.4]{saito}, since the polynomial $b_f (s)/(s + 1)$ has no roots $\geq -1$, where $b_f (s)$ stands for the $b$-function of the invariant $f$ (see Section \ref{sec:simples}).
\end{proof}

\begin{lemma}\label{lem:piO4}
The orbit $O_4$ has component group $\Z/4\Z$, and $\pi_1(O_4)\cong \Z$.
\end{lemma}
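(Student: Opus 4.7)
The plan is to exploit the $G'$-invariant polynomial $f$ of degree $4$ that defines $\ol{O}_3$. Since $O_4 = X \setminus \{f=0\}$ and the central $\C^*\subset G$ scales $f$ by $t^4$, the map $f\colon O_4\to \C^*$ is a smooth, $\C^*$-equivariant surjection, hence a locally trivial fiber bundle in the analytic topology. Its fiber over $1$, namely $Y = f^{-1}(1)$, is preserved by $G'$, and because $O_4$ is a single $G$-orbit, $Y$ is a single $G'$-orbit. Writing $Y \cong G'/H'$ with $H' = \operatorname{Stab}_{G'}(x)$ for a chosen $x \in Y$, the computation reduces to understanding $H'$.

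The essential input I need is that $H'$ is connected; I expect this to be the main obstacle to a genuinely uniform proof, since it demands case-by-case identification of the generic $G'$-isotropy. These groups have been computed classically (e.g.\ in \cite{igusa} or via Sato--Kimura): for $(C_3,\omega_3)$, $(A_5,\omega_3)$, $(D_6,\omega_5)$, $(E_7,\omega_6)$ they are $\SL_3$, $\SL_3\times \SL_3$, $\SL_6$, and $E_6$ respectively, all of which are connected (in fact simply connected). Granting this, $Y = G'/H'$ is connected and simply connected, since $G'$ itself is.

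With a simply connected fiber, the long exact homotopy sequence of the fibration $f\colon O_4 \to \C^*$,
\[\pi_1(Y)\lra \pi_1(O_4) \lra \pi_1(\C^*) \lra \pi_0(Y),\]
collapses to $0\to \pi_1(O_4)\to \Z \to 0$, yielding $\pi_1(O_4)\cong \Z$.

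For the component group, I would then compute the full stabilizer $H = \operatorname{Stab}_G(x)$ in $G = G'\times \C^*$. The kernel of the projection $H\to \C^*$ is $H'$, while its image consists of those $t$ for which $tx$ lies in $Y = G'\cdot x$, i.e.\ $t^4 f(x) = f(x)$; this forces $t\in \mu_4$, and every such $t$ is achieved by transitivity of $G'$ on $Y$. This yields a short exact sequence
\[1\lra H' \lra H \lra \mu_4 \lra 1,\]
and since $H'$ is connected while $\mu_4$ is discrete, $H^0 = H'$ and the component group is $H/H^0 \cong \mu_4 \cong \Z/4\Z$, as claimed.
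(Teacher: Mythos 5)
Your overall strategy — fiber $O_4$ over $\C^*$ via the degree-4 invariant $f$, identify the fiber $Y = f^{-1}(1)$ as $G'/H'$, and use the known connectedness of the generic $G'$-stabilizer from Igusa/Sato--Kimura — is sound and is a genuinely different, more hands-on route than the paper's, which simply cites \cite[Lemma 4.11, Remark 4.12]{catdmod} together with Lemma~\ref{lem:ratO3}. However, there is a real gap at a load-bearing step: the assertion ``because $O_4$ is a single $G$-orbit, $Y$ is a single $G'$-orbit'' does not follow. Since $\C^*$ scales $f$ by $t^4$, the subgroup $\mu_4 \subset \C^*$ preserves $Y$, and transitivity of $G = G'\times\C^*$ on $O_4$ only shows that $G'\rtimes\mu_4$ is transitive on $Y$; a priori $Y$ could have $k$ components with $k \mid 4$, cyclically permuted by $\mu_4$. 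In that case your short exact sequence would read $1 \to H' \to H \to \mu_{4/k} \to 1$ and the component group would come out as $\Z/(4/k)$, not $\Z/4$. (Curiously, your $\pi_1(O_4) \cong \Z$ conclusion is robust to this: each component of $Y$ is still simply connected, so the long exact sequence gives $\pi_1(O_4) \cong \ker(\Z \to \pi_0(Y)) \cong \Z$ regardless of $k$. But the proof as written assumed $\pi_0(Y)=0$.)

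The missing ingredient is precisely that $Y$ is connected, and this is where Lemma~\ref{lem:ratO3} enters in the paper's citation. Because $\ol{O}_3$ is irreducible (an orbit closure) and normal, hence reduced, the quartic $f$ is irreducible and reduced; then $H_1(X \setminus \{f=0\}) \cong \Z$ is generated by a meridian loop, which maps under $f_*$ to a generator of $\pi_1(\C^*) \cong \Z$. Surjectivity of $f_*$ forces $\pi_0(Y) = 0$ in the long exact sequence, so $Y$ is connected, hence a single $G'$-orbit, and your argument goes through: the image of $H \to \C^*$ is all of $\mu_4$ and $H/H^0 \cong \Z/4\Z$. Once this gap is patched, your proof is correct and is a nice explicit alternative to the paper's citation-based argument.
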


\begin{proof}
This follows by \cite[Lemma 4.11 and Remark 4.12]{catdmod} and Lemma \ref{lem:ratO3}.
\end{proof}

\subsection{Simple equivariant $\D$-modules}\label{sec:simples}

We determine the filtrations of the equivariant $\D$-modules $S_f$ and $S_f\cdot \sqrt{f}$ using the $b$-function $b_f(s)$ of $f$. From this we obtain the explicit construction of almost all simple equivariant $\D$-modules.

The roots of $b_f(s)$ are (see \cite[Section 12]{kimu}): 
\begin{equation}\label{eq:roots}
-1, \, r_1:= -\frac{m+3}{2}, \, r_2:=-\frac{2m+3}{2}, \, r_3:=-\frac{3m+4}{2}.
\end{equation}

The following is the corresponding the holonomy diagram  (see \cite{kimu} for more details):
\begin{equation}\label{eq:holonomy}
\xymatrix@C+1pc@1{ O_4 \ar@{-}[r]^{s+1} & O_3 \ar@{-}[r]^{s-r_1} & O_2 \ar@{-}[r]^{s-r_2} & O_1 \ar@{-}[r]^{s-r_3} & O_0 }
\end{equation}

\begin{lemma}\label{lem:localbs}
Let $x_{\lie'}$ denote (up to a non-zero constant) the degree $2$ highest weight vector of $\lie'$ in $S$ (see (\ref{eq:charS})), and $\partial_{\lie'}$ the constant differential operator of degree $-2$  of the same highest weight. Then we have
\[\partial_{\lie'} \cdot f^{s+1} = (s+1)(s-r_1) \cdot x_{\lie'} f^s,\]
hence the local $b$-function of $f$ at a point in $O_2$ is $(s+1)(s-r_1)$.
\end{lemma}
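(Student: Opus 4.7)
The plan is to compute $\partial_{\lie'}(f^{s+1})$ via the Leibniz rule and then identify the two resulting polynomial coefficients with $x_{\lie'}$ using $G'$-equivariance, reducing the lemma to the determination of a single scalar ratio. Writing $\partial_{\lie'} = \sum a_{ij} \partial_i \partial_j$ in coordinates, the product rule yields
$$\partial_{\lie'}(f^{s+1}) \;=\; (s+1)\, g_2\, f^s \;+\; (s+1)\, s\, g_1\, f^{s-1},$$
with $g_2 := \partial_{\lie'}(f) \in S_2$ and $g_1 := \sum a_{ij}(\partial_i f)(\partial_j f) \in S_6$. Since $f$ is $G'$-invariant, both $g_1$ and $g_2$ are highest-weight vectors of the same weight as $x_{\lie'}$, and the character formula (\ref{eq:charS}) shows $\lie'$ appears in $S_2$ with multiplicity one, so $g_2 = c_2\, x_{\lie'}$ for some $c_2 \in \C$.

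The divisibility $f \mid g_1$ can be established via Pyasetskii duality. The map $df \colon X \setminus \{0\} \to X^*$ is $G'$-equivariant, and by (\ref{eq:pya}) it sends the smooth open locus $O_3 \subset \overline{O}_3$ into its projective dual orbit $O_3^\vee = O_1$. Under the $G'$-isomorphism $X \cong X^*$, the quadratic form on $X^*$ associated to $\partial_{\lie'} \in \Sym^2(X)$ corresponds (up to scalar) to $x_{\lie'} \in \Sym^2(X^*) = S_2$, which vanishes on $\overline{O}_1$. Hence $g_1 = 0$ on $O_3$, and since $f$ is reduced (cf.\ Lemma \ref{lem:ratO3}), $f \mid g_1$; the multiplicity-one argument applied to $g_1/f \in S_2$ then yields $g_1 = c_1\, x_{\lie'}\, f$ for some $c_1 \in \C$. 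Substituting back gives
$$\partial_{\lie'}(f^{s+1}) \;=\; (s+1)(c_1 s + c_2)\, x_{\lie'}\, f^s.$$

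What remains, and what I expect to be the main obstacle, is verifying $c_1 \neq 0$ and $c_2/c_1 = -r_1 = (m+3)/2$. The non-vanishing of $c_1$ is equivalent to $g_1 \not\equiv 0$, which holds because on the open orbit $O_4$ the image of $df$ is not contained in the zero locus of $x_{\lie'}$. The ratio $c_2/c_1$ is a representation-theoretic invariant; I would compute it either by direct evaluation in each of the four cases using explicit models for $f$ and $x_{\lie'}$, or more uniformly by identifying the transverse-slice $b$-function at $O_2$ from the holonomy diagram (\ref{eq:holonomy}) and matching the degree-two factor. Once the displayed identity holds, the final conclusion about the local $b$-function at $O_2$ is immediate: $x_{\lie'}$ does not vanish on $O_2$ (since it generates the ideal of $\overline{O}_1 \not\supset O_2$), so after inverting $x_{\lie'}$ we obtain a local functional equation of degree $2$ with leading polynomial $(s+1)(s-r_1)$, and its minimality follows from the holonomy structure.
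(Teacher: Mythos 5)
Your proposal takes a genuinely different route than the paper, which disposes of the identity by citing \cite[Theorem~2.5]{bub} (cf.\ \cite[Remark 3.6]{senary}) and reads off the local $b$-function from the holonomy diagram. You instead do a direct Leibniz computation, writing $\partial_{\lie'}(f^{s+1}) = (s+1)g_2 f^{s} + (s+1)s\,g_1 f^{s-1}$ with $g_2 = \partial_{\lie'}(f)$ and $g_1 = \sum a_{ij}(\partial_i f)(\partial_j f)$, reduce $g_2 = c_2 x_{\lie'}$ via the multiplicity-one observation that $\lie'$ appears once in $S_2$, and prove $f \mid g_1$ by a nice Pyasetskii-duality argument: $\nabla f$ carries $O_3$ into $\ol{O}_1$ (after the $G'$-isomorphism $X \cong X^*$), where $\partial_{\lie'} \cong x_{\lie'}$ vanishes, and $f$ is a reduced equation for $\ol{O}_3$. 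Since $g_1/f$ is again a highest-weight vector in $S_2$ of the same weight, this gives $\partial_{\lie'}(f^{s+1}) = (s+1)(c_1 s + c_2)\, x_{\lie'} f^s$; the structure of this reduction is sound and is a pleasant self-contained alternative to the black-box citation.

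The proposal does not finish, however: the actual numerical content of the lemma, namely $c_1 \neq 0$ and $c_2/c_1 = -r_1$, is only outlined rather than proved. Your argument for $c_1 \neq 0$ asserts that $\nabla f(O_4)$ is not contained in $\{\partial_{\lie'} = 0\}$, which is true, but it requires knowing that $\nabla f$ maps the open orbit $O_4$ onto the open orbit of $X^*$ --- the regularity of the prehomogeneous vector space --- because a priori $\nabla f(O_4)$ is just some $G'$-orbit that could well lie inside a non-$G'$-stable hypersurface; this needs to be cited, not asserted. Once $c_1\neq 0$ is secured, your holonomy-diagram strategy does pin down the ratio: localizing at a point of $O_2$ where $x_{\lie'}$ is a unit makes $(s+1)(c_1 s + c_2)$ a multiple of the degree-two local $b$-function $(s+1)(s-r_1)$ recorded in \cite{kimu}, forcing the two quadratics to agree up to scalar. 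But you present this and the case-by-case alternative as things you ``would compute,'' so as written the proof leaves both scalars undetermined; with these two steps carried through, the argument would be a complete alternative to the paper's proof.
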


\begin{proof}
This follows by \cite[Theorem 2.5]{bub}, see also \cite[Remark 3.6]{senary}. The statement about the local $b$-function also follows from the holonomy diagram (\ref{eq:holonomy}).
\end{proof}

According to \cite[11.6.2]{htt} and Section \ref{subsec:funds}, the simple equivariant $\D$-modules on $X$ are the following.

\begin{notation}
For each $O_i$ (with $1\leq i \leq 3$), we denote the simple $\D$-module corresponding to the trivial local system on $O_i$ by $L_i$. The $\D$-modules $S$ (the coordinate ring) and its Fourier transform $E$ (the injective envelope of the residue field) correspond to the trivial local system on $O_4$ and $O_0$, respectively. Let $L'_4$ and $L'_2$ be the equivariant simple $\D$-modules corresponding to the non-trivial self-dual local systems on $O_4$ and $O_2$, respectively (the latter only for $(G',X)=(C_3,\omega_3)$). The remaining simple equivariant $\D$-modules with full support will be denoted by $L_{4,1}$ and $L_{4,3}$. 
\end{notation}

Given a coherent $\D$-module $M$, we write $\charC(M)$ for its characteristic cycle (see \cite{kashi}), which is a formal linear combination of the irreducible components of its characteristic variety counted with multiplicities -- these are always closures of conormal bundles to orbits when $M$ is equivariant. For such an $M$ and an orbit $O$, write $\op{mult}_{\ol{T^\ast_{O}X}} M$ for the multiplicity of $\ol{T^\ast_{O}X}$ in $\charC(M)$. Clearly, $\charC(S)=[\ol{T^\ast_{O_0} X}]$ and $\charC(E)=[\ol{T^\ast_{O_4}X}]$. The diagram \ref{eq:holonomy} has an edge between two orbits $O,O'$ if $\ol{T^\ast_{O}X}$ and $\ol{T^\ast_{O'}X}$ intersect in codimension 1.

\begin{lemma}\label{lem:charC}
Let $M \in \opmod_{G}(\D_X)$ and $V\subset M$ a finite-dimensional $G$-submodule such that $\D_X \cdot V = M$. Then for any $G$-orbit $O \subset X$, we have $\op{mult}_{\ol{T^\ast_{O}X}} M \leq \dim V$. 
\end{lemma}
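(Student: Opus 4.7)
The plan is to equip $M$ with the good $G$-equivariant filtration $F_k M := F_k\D_X \cdot V$ and exploit strong $G$-equivariance to force the moment-map ideal to annihilate $\gr^F M$, reducing the multiplicity computation at $C = \ol{T^\ast_O X}$ to a $\kappa(\eta)$-dimension bound at the generic point $\eta$ of $C$.

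First I would note that $F_\bullet M$ is a good $G$-equivariant filtration (since $V$ is $G$-stable and finite-dimensional, generates $M$ over $\D_X$, and each $F_k \D_X$ is $G$-stable). The canonical surjections $F_k \D_X \otimes V \twoheadrightarrow F_k M$ induce on associated graded a surjection of $\gr \D_X \cong \O(T^\ast X)$-modules
\[
V \otimes_\C \gr \D_X \twoheadrightarrow \gr^F M.
\]
The key observation is that strong equivariance forces the principal symbols $\sigma(v) \in \gr_1 \D_X$ of the $G$-action vector fields $v \in \mf{g}$ to annihilate $\gr^F M$: each such $v$ lies in $F_1 \D_X$ and, by strong equivariance, its $\D_X$-action on the $G$-stable subspace $V$ coincides with the differentiated $G$-action, so $v \cdot V \subset V$. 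The order-one Leibniz identity $v(sm) = s(vm) + [v,s] m$ for $s \in S, m \in V$ (with $[v,s] \in S$) then yields $v \cdot F_0 M \subset F_0 M$, so $\sigma(v)$ kills $\gr_0^F M$; since $\gr^F M$ is generated over the commutative ring $\gr \D_X$ by $\gr_0^F M$, the symbol annihilates the whole module. These $\sigma(v)$'s are (up to sign) the components of the moment map $\mu: T^\ast X \to \mf{g}^\ast$, whose zero fiber is $\mu^{-1}(0) = \bigcup_{O}\ol{T^\ast_O X}$, so $\op{supp}(\gr^F M) \subset \mu^{-1}(0)$.

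For the final step, provided $\mu^{-1}(0)$ is generically reduced along $C$—which I would verify from the explicit orbit structure recalled at the start of Section~\ref{sec:cat}—the local ring $\O_{\mu^{-1}(0),\eta}$ equals the residue field $\kappa(\eta)$, forcing $(\gr^F M)_\eta$ to be a $\kappa(\eta)$-vector space. Then $\op{mult}_{\ol{T^\ast_O X}}(M) = \op{length}_{\O_{T^\ast X,\eta}}(\gr^F M)_\eta = \dim_{\kappa(\eta)}(\gr^F M)_\eta$, and localizing the surjection above at $\eta$ and reducing modulo $\mf{m}_\eta$ produces a surjection $V \otimes_\C \kappa(\eta) \twoheadrightarrow (\gr^F M)_\eta$, yielding the bound $\op{mult}_{\ol{T^\ast_O X}}(M) \leq \dim V$. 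The main obstacle is the generic reducedness of $\mu^{-1}(0)$ along each conormal component; the rest of the argument is essentially formal manipulation of the induced filtration.
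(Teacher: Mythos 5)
Your approach is essentially the same as the paper's, just re-derived from first principles. The paper first reduces to $M = \mc{P}(V)$ via (\ref{eq:PVHom}) and then quotes the inequality $\op{mult}_{\ol{T^\ast_O X}} \mc{P}(V) \leq \dim V \cdot \op{mult}_{\ol{T^\ast_O X}} \mu^{-1}(0)$ from the proof of \cite[Proposition 3.14]{catdmod}; your filtration $F_k M = F_k\D_X \cdot V$, the Leibniz computation showing $\sigma(v)$ kills $\gr^F M$ for $v\in\lie$, and the localization at the generic point $\eta$ of $C$ amount to exactly that inequality, written out. The argument up to this point is correct.

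The genuine gap is precisely what you flagged at the end: the assertion that $\mu^{-1}(0)$ is generically reduced along each conormal component $C = \ol{T^\ast_O X}$, equivalently that $\op{mult}_C \mu^{-1}(0) = 1$. Your remark that this should follow ``from the explicit orbit structure recalled at the start of Section~\ref{sec:cat}'' is not sufficient: having finitely many orbits in $X$ identifies the \emph{reduced} components of $\mu^{-1}(0)$ with the $\ol{T^\ast_O X}$, but says nothing about the scheme structure, and in general the moment-map fiber can be thickened along a conormal component. What is actually needed, and what the paper invokes, is the stronger fact (cited from \cite{kimu}) that for each orbit $O_i$ the variety $\ol{T^\ast_{O_i}X}$ carries a \emph{dense $G$-orbit}; this prehomogeneity is then fed into \cite[Lemma 3.12]{catdmod}, which yields the multiplicity-one statement. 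So to make your proof self-contained you must supply this dense-orbit input rather than appeal to the orbit count on $X$.
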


\begin{proof}
As there is a surjective map $\mc{P}(V) \to M$ by (\ref{eq:PVHom}), it is enough to prove the statement for $M=\mc{P}(V)$. Let $\mu : T^* X \to \lie^*$ denote the moment map (see \cite[Section 3.1]{catdmod}), and $\mu^{-1}(0)$ the scheme-theoretic fiber. Since $G$ acts on $X$ with finitely many orbits, the irreducible components of $\mu^{-1}(0)$ (as a variety) are precisely the closures of conormal bundles to the orbits. As shown in the proof of \cite[Proposition 3.14]{catdmod}, we have $\op{mult}_{\ol{T^\ast_{O}X}} \mc{P}(V) \leq \, \dim V \cdot \op{mult}_{\ol{T^\ast_{O}X}} \mu^{-1}(0)$. By \cite{kimu}, the variety $\ol{T^\ast_{O_i}X}$ has a dense $G$-orbit, for each $i=0,\dots,4$. Therefore, by \cite[Lemma 3.12]{catdmod} we have $\op{mult}_{\ol{T^\ast_{O}X}}\mu^{-1}(0)=1$.
\end{proof}

\begin{theorem}\label{thm:simples}
We have the following explicit construction of simple equivariant $\D$-modules:
\begin{itemize}
\item[(a)] When $(G',X)$ is $(A_5,\omega_3),\, (D_6, \omega_5)$ or $(E_7,\omega_6)$:
\begin{itemize}
\item[(0)] $E \cong \D f^{r_3}/ \D f^{-1}$.
\item[(1)] $L_1\cong \D f^{r_2}/\D f^{r_1}$ with $\charC(L_1)=[\ol{T^\ast_{O_1}X}]+[\ol{T^\ast_{O_0} X}]$.
\item[(2)] $L_2\cong \D f^{r_1}/\D f^{r_1+1}$ with $\charC(L_2)=[\ol{T^\ast_{O_2}X}]$.
\item[(3)] $L_3\cong \D f^{-1}/S$ with $\charC(L_3)=[\ol{T^\ast_{O_3}X}]+[\ol{T^\ast_{O_2}X}]+[\ol{T^\ast_{O_1} X}]$.
\item[(4)] $L'_4\cong \D f^{r_1+1}$ with $\charC(L'_4)=[\ol{T^\ast_{O_4}X}]+[\ol{T^\ast_{O_3} X}]$, and $L_{4,i}\cong \D f^{i/4}$ (where $i=1,3$) with $\charC(L_{4,i} )=[\ol{T^\ast_{O_4} X}]+[\ol{T^\ast_{O_3}X}]+[\ol{T^\ast_{O_2}X}]+[\ol{T^\ast_{O_1} X}]+[\ol{T^\ast_{O_0} X}]$.
\end{itemize}
\item[(b)] When $(G',X)$ is $(C_3,\omega_3)$:
\begin{itemize}
\item[(0)] $E \cong \D f^{r_3}/ \D f^{r_2}$.
\item[(1)] $L_1\cong \D f^{r_2}/\D f^{r_2+1}$ with $\charC(L_1)=[\ol{T^\ast_{O_1}X}]$.
\item[(2)] $L'_2\cong \D f^{r_1}/\D f^{-1}$ with $\charC(L'_2)=[\ol{T^\ast_{O_2}X}]+[\ol{T^\ast_{O_1}X}]+[\ol{T^\ast_{O_0}X}]$, and $L_2 \cong H^4_{\ol{O}_2}(S)$ with $\charC(L_2)=[\ol{T^\ast_{O_2}X}]$.
\item[(3)] $L_3\cong \D f^{-1}/S$ with $\charC(L_3)=[\ol{T^\ast_{O_3}X}]$.
\item[(4)] $L'_4\cong \D f^{r_2+1}$ with $\charC(L'_4)=[\ol{T^\ast_{O_4}X}]+[\ol{T^\ast_{O_3} X}]$, and $L_{4,i}\cong S_f \cdot f^{i/4}$ (where $i=1,3$) with $\charC(L_{4,i} )=[\ol{T^\ast_{O_4} X}]+[\ol{T^\ast_{O_3}X}]+[\ol{T^\ast_{O_2}X}]+[\ol{T^\ast_{O_1} X}]+[\ol{T^\ast_{O_0} X}]$.
\end{itemize}
\end{itemize}
\end{theorem}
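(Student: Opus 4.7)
The proof is driven by the Bernstein--Sato filtration associated to the invariant $f$. For any $\alpha\in \C$, the functional equation $P(s)f^{s+1}=b_f(s)f^s$ forces $\D f^\alpha=\D f^{\alpha+1}$ whenever $b_f(\alpha)\neq 0$, so the filtration $\{\D f^{\alpha+k}\}_{k\in\Z_{\geq 0}}$ is piecewise constant on each coset $c+\Z\subset \C$, jumping precisely at those $\alpha$ with $b_f(\alpha)=0$. By \eqref{eq:roots}, the four roots distribute across cosets of $\Z$ depending on the parity of $m$: in case (a) the integer coset contains $\{-1,r_3\}$ and the half-integer coset $\{r_1,r_2\}$, while in case (b) the integer coset contains $\{-1,r_1\}$ and the half-integer coset $\{r_2,r_3\}$. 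The quarter-integer cosets $\tfrac{1}{4}+\Z$ and $\tfrac{3}{4}+\Z$ contain no root, so $\D f^{1/4}$ and $\D f^{3/4}$ are stable under the shift $s\mapsto s+1$ and coincide with the simple minimal extensions of the corresponding monodromy-$(\pm i)$ local systems on $O_4$ (which exist by Lemma \ref{lem:piO4}); these are $L_{4,1}$ and $L_{4,3}$, and the two presentations $\D f^{i/4}$ and $S_f\cdot f^{i/4}$ agree because there are no intermediate $b$-function roots.

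Coset by coset, the remaining chains have at most two jumps, and we read off composition factors directly. In case (a) the integer chain $S=\D f^0\subsetneq \D f^{-1}\subsetneq \D f^{r_3}=S_f$ gives $L_3=\D f^{-1}/S$ and $E=\D f^{r_3}/\D f^{-1}$: $L_3$ is identified via its support $\ol{O}_3$ (Lemmas \ref{lem:piO3} and \ref{lem:ratO3} force uniqueness of the simple carrying a trivial local system), while $E$ is pinned down by matching $G$-characters with \eqref{eq:charE}, equivalently by applying the Fourier transform \eqref{eq:fourier} to $S$. The half-integer chain $\D f^{r_2}\supsetneq \D f^{r_1}\supsetneq \D f^{r_1+1}$ yields $L_1$, $L_2$, and $L'_4$; the bottom term $\D f^{r_1+1}$ is simple as the minimal extension of the $(-1)$-monodromy local system on $O_4$. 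In case (b), the integer chain $S\subsetneq \D f^{-1}\subsetneq \D f^{r_1}$ produces $L_3$ followed by $L'_2$, while the half-integer chain $\D f^{r_3}\supsetneq \D f^{r_2}\supsetneq \D f^{r_2+1}$ produces $E$, $L_1$, and $L'_4$. The simple $L_2$ is not a jump of any $\D f^s$-filtration; instead we identify $L_2\cong H^4_{\ol{O}_2}(S)$ by analyzing this local cohomology module (well-defined since $\codim\ol{O}_2=m+3=4$) via its $G$-character and comparing with the classification of simples supported on $\ol{O}_2$ from Section \ref{subsec:funds}.

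The characteristic cycles are then determined by combining Lemma \ref{lem:charC}, which upper-bounds $\op{mult}_{\ol{T^\ast_{O_i}X}}M$ by the dimension of any generating $G$-submodule (we take the lowest-weight piece of each quotient), with the elementary lower bound that a simple $\D$-module supported on $\ol{O}$ has multiplicity exactly one at $\ol{T^\ast_O X}$, and with the constraint that the cycles of successive quotients in a coset chain sum to that of the top term. Pyasetskii duality \eqref{eq:pya}, which under Fourier transform exchanges $O_i\leftrightarrow O_{4-i}$, provides further constraints on each Fourier pair. The main obstacle lies in case (b): distinguishing $L_2$ from $L'_2$ and correctly placing them. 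This uses Lemma \ref{lem:localbs} to analyze the local behavior of $f^s$ at a generic point of $O_2$, showing that $\D f^{r_1}/\D f^{-1}$ carries the \emph{nontrivial} local system (hence equals $L'_2$), after which the non-simple-connectedness of the middle orbit (Lemma \ref{lem:piO2}) manifests precisely as the source of the extra simple $L'_2$ sitting alongside $L_2=H^4_{\ol{O}_2}(S)$.
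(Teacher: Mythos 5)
Your proposal follows essentially the same route as the paper: use the Bernstein--Sato filtration on the cosets $\alpha + \Z$ to produce the chains, identify the simples via $G$-characters, Fourier transform, Pyasetskii duality and characteristic cycles, and handle the extra simple $L_2$ in case (b) via $H^4_{\ol{O}_2}(S)$, with Lemma~\ref{lem:localbs} used at the end to separate $L_1$ from $L_2'$. However, there is a genuine gap at the heart of the argument: you ``read off composition factors directly'' from the chains, i.e.\ you treat the successive quotients $\D f^{r_2}/\D f^{r_1}$, $\D f^{r_1}/\D f^{r_1+1}$, $\D f^{-1}/S$, etc.\ as already known to be \emph{simple}. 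This is precisely what needs proof. What the Bernstein--Sato picture gives (via \cite[Proposition~4.9]{catdmod}, which you never cite but implicitly use through the functional-equation heuristic) is only that each successive quotient has a \emph{unique simple quotient}; a priori a quotient could have length $>1$. The paper closes this gap with a counting argument: the chains plus the Fourier transform of $S$ plus $S_f f^{1/4}$, $S_f f^{3/4}$ yield six simples with $G'$-invariant sections and two with fractional-weight semi-invariants; $L_2 = H^4_{\ol{O}_2}(S)$ has no semi-invariant section (Lemma~\ref{lem:H4O2}); and by Section~\ref{subsec:funds} there are exactly nine simples total. Only then does the multiplicity-free bound of Lemma~\ref{lem:charC} combine with the holonomy-diagram connectivity to force the quotients to be simple. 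Without this enumeration your identifications are underdetermined.

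Two smaller issues. First, your claim that $\D f^{r_1+1}$ is ``simple as the minimal extension of the $(-1)$-monodromy local system'' is circular: being a minimal extension is equivalent to simplicity, which is what you need; the paper instead cites \cite[Corollary~6.25]{kashi} for the simplicity of $\D f^{-3/2}$ and of $S_f \cdot f^{i/4}$. Second, in your closing paragraph the ambiguity you resolve via Lemma~\ref{lem:localbs} is not between $L_2$ and $L_2'$ (these are told apart immediately by the absence/presence of semi-invariant sections) but between $L_1$ and $L_2'$, i.e.\ which of the two half-integer respectively integer jumps is supported on $\ol{O}_1$ versus on $\ol{O}_2$; the paper phrases this as determining the support of $\D f^{-5/2}/\D f^{-3/2}$. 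Your remark that $\D f^{i/4} = S_f \cdot f^{i/4}$ because the quarter-integer cosets contain no $b$-function roots is correct and is a nice observation unifying the phrasing between parts (a) and (b).
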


\begin{proof}
Since part (a) follows as  \cite[Theorem 3.5]{senary}, we give a proof for the case $(G',X)=(C_3,\omega_3)$ only, when $r_1=-2$, $r_2=-5/2$ and $r_3=-7/2$.

First, by \cite[Corollary 6.25]{kashi} the equivariant $\D$-modules of full support $\D f^{-3/2}$ and $S_f \cdot f^{i/4}$ ($i=1,3$) are simple. Having disjoint weights of $G$-semi-invariant elements, these $\D$-modules are pairwise non-isomorphic (alternatively, this also follows by the Riemann--Hilbert correspondence). Viewed as $\D$-modules on $X\setminus \ol{O}_3$, tensoring $S_f \cdot f^{i/4}$ by itself four times yields $S_f$, hence $L_{4,i} \cong S_f \cdot f^{i/4}$ and $L_4' \cong \D f^{-3/2}$.

By \cite[Proposition 4.9]{catdmod}, we have the following filtrations in $S_f$ and $S_f \cdot \sqrt{f}$.
\[0\subsetneq S \subsetneq \D f^{-1} \subsetneq \D f^{-2}, \;\;\; 0\subsetneq \D f^{-3/2}\subsetneq \D f^{-5/2} \subsetneq \D f^{-7/2}.\]
Each of the successive quotients of the filtration has a unique simple $\D$-module quotient, hence we get six non-isomorphic equivariant simple $\D$-modules $S, L^{-1}, L^{-2}, \D f^{-3/2}, L^{-5/2} , L^{-7/2}$ (see \cite[Proposition 4.9]{catdmod} and notation therein) respectively, all having $G'$-invariant sections. On the other hand, by Lemma \ref{lem:H4O2} we see that $H^4_{\ol{O}_2}(S)$ (and hence $L_2$, which is always a submodule) has no $G'$-invariant sections. By Section \ref{subsec:funds} this yields all simple equivariant $\D$-modules, and shows also that $L_2\cong H^4_{\ol{O}_2}(S)$ must be simple (note that the argument also implies that the component group of $O_2$ must be indeed $\Z / 2\Z$, as discussed in Lemma \ref{lem:piO2}).

The local cohomology module $H^1_{\ol{O}_3}(S)=S_f/S$ contains a unique simple $\D$-module, which is $L_3$. The module $\D f^{-1}/S$ is a submodule of $H^1_{\ol{O}_3}(S)$, hence has unique simple sub- and quotient modules $L_3$ and $L^{-1}$, respectively. From $S \subsetneq \D f^{-1} \subsetneq \D f^{-2}$ we see that $f^{-1}$ is the only $G'$-invariant section of $\D f^{-1}/S$ (up to scaling), therefore the only other simple $\D$-module besides $L^{-1}$ that could appear as its composition factor is $L_2$. Hence, $L_3 \cong \D f^{-1}/S \cong L^{-1}$.

Since $E=\F(S)$ has a $G'$-invariant section of degree $-14$, we must have $L^{-7/2} \cong E$ by (\ref{eq:charE}). Similarly, $\F(L_3)\cong L^{-5/2}$, $\F(L_2) \cong L_2$, $\F(L^{-2}) \cong L'_4$ and $\F(L_{4,i})\cong L_{4,i}$ (with $i=1,3$). In particular, $\ol{T^\ast_{O_0} X}$ is an irreducible component of the characteristic variety of $L^{-2}, L_{4,1}$ and $L_{4,3}$ (see \cite[Section 4.3]{catdmod}).

By Lemma \ref{lem:charC}, any equivariant $\D$-module that is generated by a $G$-semi-invariant section has a multiplicity-free characteristic cycle (for comparison, see end of proof of \cite[Theorem 3.5]{senary}). Moreover, for any equivariant indecomposable $\D$-module, its characteristic variety should have irreducible components connected via the holonomy diagram \ref{eq:holonomy} (see \cite[Theorem 6.7]{mac-vil}). We have seen that both $\ol{T^\ast_{O_0} X}$ and $\ol{T^\ast_{O_4} X}$ are components of $\charC(S_f\cdot f^{i/4})$, for $i=0,1,2,3$. In conclusion, we have that $\charC(S_f \cdot f^{i/4})=[\ol{T^\ast_{O_4} X}]+[\ol{T^\ast_{O_3}X}]+[\ol{T^\ast_{O_2}X}]+[\ol{T^\ast_{O_1} X}]+[\ol{T^\ast_{O_0} X}]$, for all $i=0,1,2,3$.

Since $\ol{T^\ast_{O_0} X}$ is a component of the characteristic variety of $L^{-2}$ and its support cannot be $O_0$,  $\ol{T^\ast_{O_1} X}$ must also be a component. Since $\ol{T^\ast_{O_3} X}$ is in $\charC(L_3)$, by (\ref{eq:pya}) $\ol{T^\ast_{O_1} X}$ is a component of the characteristic variety of $\F(L_3) \cong L^{-5/2}$. From the $\D$-modules $L^{-2},L^{-5/2}$, one must be $L_1$ and the other $L'_2$: if $L^{-2}\cong L'_2$, then $\ol{T^\ast_{O_2} X}$ is a component in the characteristic varieties of $L^{-2}$ and $\F(L^{-2})\cong L_4'$; on the other hand, if $L^{-5/2}\cong L'_2$, then  $\ol{T^\ast_{O_2} X}$ is a component in the characteristic varieties of $L^{-5/2}$ and $\F(L^{-5/2})\cong L_3$. Either way, since $[\ol{T^\ast_{O_2}X}]$ appears in $\charC(S_f \cdot f^{i/2})$ with multiplicity one (for $i=0,1$), this shows that $L_2$ cannot be a composition factor of  $S_f \cdot f^{i/2}$. Hence, $S, L_3, L^{-2}$ (resp. $L_4', L^{-5/2} , E$) are all the simples appearing as composition factors in $S_f \cdot f$ (resp. $S_f \cdot \sqrt{f}$), and all with multiplicity one. Therefore, we have $L^{-2} \cong \D f^{-2} / \D f^{-1}$ and $L^{-5/2}\cong \D f^{-5/2} / \D f^{-3/2}$.

We are left to show that we have in fact $L_1 \cong \D f^{-5/2}/ \D f^{-3/2}$ (and hence $L_2' \cong \D f^{-2}/ \D f^{-1}$), for which it is enough to see that the support of $\D f^{-5/2}/ \D f^{-3/2}$ is contained in $\ol{O}_1$. Since $\lie'$ (in degree $2$) from (\ref{eq:charS}) generates the defining ideal of $\ol{O}_1$, we conclude by Lemma \ref{lem:localbs}.
\end{proof}

\subsection{The quiver of $\opmod_{G}(\D_X)$} \label{sec:quivmod}

Here we describe the quivers of $\opmod_{G}(\D_X)$ as discussed in Section \ref{sec:prelim}. The vertices of the quivers are labeled with the simple equivariant $\D$-modules that they correspond to.

\begin{theorem}\label{thm:quiv}
There is an equivalence of categories
$$
\opmod_G(\D_X)\cong \rep(Q,I),
$$
where $\rep(Q,I)$ is the category of finite-dimensional representations of a quiver $Q$ with relations $I$. The vertices $L_{4,i}$ (with $i=1,3$) are isolated, while the rest of quiver $Q$ is given as follows (with the relations $I$ given by all $2$-cycles):
\begin{itemize}
\item[(a)] When $(G',X)$ is $(A_5,\omega_3),\, (D_6, \omega_5)$ or $(E_7,\omega_6)$:
\[\xymatrix@1{S\ar@<0.5ex>[r] & \ar@<0.5ex>[l] L_3 \ar@<0.5ex>[r] & \ar@<0.5ex>[l] E} \quad \quad \quad \xymatrix@1{L_4'\ar@<0.5ex>[r] &\ar@<0.5ex>[l] L_2\ar@<0.5ex>[r] & \ar@<0.5ex>[l] L_1}\]
\item[(b)] When $(G',X)$ is $(C_3,\omega_3)$:
\[\xymatrix@1{S\ar@<0.5ex>[r] & \ar@<0.5ex>[l] L_3 \ar@<0.5ex>[r] & \ar@<0.5ex>[l] L_2'} \quad \quad \quad \xymatrix@1{L_4'\ar@<0.5ex>[r] &\ar@<0.5ex>[l] L_1\ar@<0.5ex>[r] & \ar@<0.5ex>[l] E}\]
and the vertex $L_2$ is isolated.
\end{itemize}

\end{theorem}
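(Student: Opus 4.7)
The plan is as follows. By \cite[Theorem 4.3]{vilonen} (or \cite[Theorem 3.4]{catdmod}), since $\opmod_G(\D_X)$ is Artinian with finitely many simples and enough projectives and injectives, it is equivalent to $\rep(Q,I)$ for a unique (up to isomorphism) quiver $Q$ with admissible relations $I$. The vertices of $Q$ are in bijection with the simples classified in Theorem \ref{thm:simples}, and the number of arrows from a simple $L$ to a simple $L'$ equals $\dim \Ext^1_{\opmod_G(\D_X)}(L,L')$. The task thus splits into (i) producing the claimed arrows as non-split extensions, (ii) showing that all other $\Ext^1$ groups vanish, and (iii) identifying the relations as the $2$-cycles.

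For (i), I would read non-split extensions directly off the filtrations of $\D f^s$ exhibited in the proof of Theorem \ref{thm:simples}. In case (b), for example, the chain
\[
0 \subsetneq S \subsetneq \D f^{-1} \subsetneq \D f^{-2}
\]
produces an indecomposable $\D$-module with composition factors (top to bottom) $L_2',L_3,S$, yielding the non-split short exact sequences $0\to S\to \D f^{-1}\to L_3\to 0$ and $0\to L_3\to \D f^{-2}/S\to L_2'\to 0$, hence the arrows $L_3\to S$ and $L_2'\to L_3$. The reverse arrows come from the holonomic duality functor, a contravariant self-equivalence of $\opmod_G(\D_X)$ fixing each simple in the non-isolated portion of the quiver (all of them being self-dual local systems) and reversing composition series. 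The filtration $0\subsetneq \D f^{-3/2}\subsetneq \D f^{-5/2}\subsetneq \D f^{-7/2}$ treated analogously produces the component $L_4'\text{--}L_1\text{--}E$, and case (a) proceeds identically using the values of $r_1,r_2,r_3$ appropriate to each $m$; the Fourier transform $\F$ of \eqref{eq:fourier}, which by Theorem \ref{thm:simples} swaps the two $3$-vertex components, provides a cross-check.

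For (ii), the principal obstacle is the vanishing statements: isolation of $L_{4,1},L_{4,3}$ in all cases and of $L_2$ in case (b), together with the cross-component vanishing $\Ext^1=0$ between the two $3$-vertex components. For the Fourier self-dual full-support simples $L_{4,i}$ ($i=1,3$), whose restriction to $O_4$ is a rank-one local system with non-self-dual monodromy of order $4$, any non-split extension with another simple must either restrict to a non-split extension of non-isomorphic rank-one local systems on the smooth connected variety $O_4$ (impossible), or be an extension by a simple supported on $\ol{O}_3$; the latter is excluded using Lemma \ref{lem:charC} together with the constraint from \cite[Theorem 6.7]{mac-vil} that the characteristic cycle of an indecomposable $\D$-module has irreducible components connected via the holonomy diagram \eqref{eq:holonomy}, combined with the fact that $L_{4,i}$ already carries a multiplicity-free cycle with all five conormal components. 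The isolation of $L_2$ in case (b) is subtler and depends on the non-trivial component group $\Z/2\Z$ of $O_2$ from Lemma \ref{lem:piO2}: since $L_2$ is the unique simple supported on $\ol{O}_2$ whose local system is trivial on $O_2$, one argues via a nearby-cycles computation along $O_2$ (building on Lemma \ref{lem:localbs}) that each of the remaining simples has trivial monodromy contribution along $O_2$ in a way that forces every potential extension with $L_2$ to split. I expect this monodromy bookkeeping, specific to $(C_3,\omega_3)$, to be the technical heart of the argument.

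For (iii), once the arrows are determined it remains to identify the admissible ideal $I$. Since the non-isolated part of $Q$ is a disjoint union of two $A_3$-type double quivers, one shows that each indecomposable projective cover of a simple at a non-isolated vertex has composition length exactly $3$; this is achieved by identifying these projectives with the relevant filtered pieces of $\D f^s$ from (i), or equivalently with induced modules $\mc{P}(V)$ via \eqref{eq:PVHom} for the one-dimensional $V$ generated by the corresponding semi-invariant, and comparing composition multiplicities as in \cite[Section III.2]{ass-sim-sko}. The resulting length forces the admissible ideal $I$ to coincide with the one generated by all $2$-cycles at each vertex.
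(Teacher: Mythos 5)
Your step (i) and the broad framework are aligned with the paper, but the crucial vanishing statements in (ii) are either wrong in outline or left hanging, and the paper's proof handles them by a much cleaner injectivity device that your proposal never invokes.

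For the isolation of $L_{4,i}$, the paper's observation is simply that $L_{4,i}\cong S_f\cdot f^{i/4}$ is an \emph{injective} object of $\opmod_G(\D_X)$ by \cite[Lemma 2.4]{bindmod}, and being simple it equals its own injective hull; hence $\Ext^1(L,L_{4,i})=0$ for every simple $L$, and holonomic duality kills the arrows in the other direction. Your proposed substitute has two problems. First, for an extension $0\to L'\to M\to L_{4,i}\to 0$ with $L'$ supported on $\ol O_3$, you invoke Lemma~\ref{lem:charC} to bound the characteristic cycle, but that lemma only applies to modules generated by a finite-dimensional $G$-submodule; an arbitrary extension need not be so generated, so the multiplicity-freeness argument does not apply as stated. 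Second, the ``restrict to $O_4$'' argument does not by itself exclude extensions $0\to L_{4,i}\to M\to L\to 0$ with $L$ another full-support simple: the restriction to $O_4$ may well split while the global extension is non-split (the IC property controls perverse sub- and quotient objects from the closure, not extensions of local systems on the open orbit). Similarly, for the isolation of $L_2$ in case (b) you anticipate a delicate nearby-cycles/monodromy computation, but the paper dispatches this in one line: $L_2\cong H^4_{\ol O_2}(S)$ is injective in $\opmod_G^{\ol O_2}(\D_X)$ by \cite[Lemma 3.11]{catdmod}, which kills $\Ext^1$ from everything supported on $\ol O_2$, and then Fourier transform (which fixes $L_2$ and permutes the remaining full-support simples with the small-support ones) together with holonomic duality finishes the job. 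Finally, you flag the cross-component vanishing $\Ext^1(L_3,L_1)=\Ext^1(L_1,L_3)=0$ as one of the tasks of (ii), but never supply an argument; the paper again uses an injective hull, namely that $L_1$ is not a composition factor of $H^1_f(S)$, which is the injective hull of $L_3$ in $\opmod_G^{\ol O_3}(\D_X)$. In short, the approach via injective hulls in the relevant subcategories --- which the paper uses uniformly for every vanishing step --- is the missing idea, and without it the vanishing half of your (ii) is not established. Your (iii) is in the right spirit and is essentially the paper's argument for the two middle vertices (projective covers $\mc P(V)$ plus Lemma~\ref{lem:charC}), though the paper handles the end vertices through injective hulls and Fourier transform rather than composition-length bookkeeping.
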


\begin{proof}
Again, we sketch the proof for case (b) only, otherwise the result follows as in \cite[Section 3]{senary}. The modules $S_f \cdot f^{i/4}$ are injective hulls in $\opmod_{G}(\D_X)$ \cite[Lemma 2.4]{bindmod}. Using holonomic duality, this shows that the vertices $L_{4,i}$ (with $i=1,3$) are isolated. The module $L_2\cong H^4_{\ol{O}_2}(S)$ being an injective object in $\opmod_G^{\ol{O}_2}(\D_X)$ \cite[Lemma 3.11]{catdmod}, we see that it must be an isolated vertex by taking Fourier transforms and holonomic duality.

The filtration of $S_f \cdot f$ (resp. $S_f \cdot \sqrt{f}$) gives non-split extensions by \cite[Lemma 2.8]{catdmod}. This, arguing by holonomic duality and Fourier transform, yields all the required arrows of the quiver. The fact that there are no arrows between $L_3$ and $L_1$ follows since $L_1$ is not a composition factor for $H^1_f(S)$, which is the injective hull of $L_3$ in the category $\opmod_G^{\ol{O}_3}(\D_X)$ \cite[Lemma 3.11]{catdmod}. Hence, there are no other arrows in the quiver. Due to the structure of their injective hulls, the $2$-cycles at $S$ and $L_4'$ are zero, which in turn implies that the $2$-cycles at $L_2'$ and $E$ are zero as well, by applying Fourier transforms.

We are left to show that the 2-cycles at $L_3$ and $L_1$ are zero. By Theorem \ref{thm:simples}, $L_1$ and $L_3$ are generated by $G$-semi-invariant sections as $\D_X$-modules. Projective objects in $\opmod_G(\D_X)$ that map to these simples and are generated by $G$-semi-invariant sections can be constructed via (\ref{eq:PVHom}). By Lemma \ref{lem:charC}, such projectives must have multiplicity-free characteristic cycles. In particular, $L_3$ and $L_1$ appear only once in the composition series of their respective projective covers, which implies that the 2-cycles at $L_3$ and $L_1$ are zero. 
\end{proof}

While it fixes all isolated vertices, the Fourier transform also behaves differently in the two cases above, as seen in the proof of Theorem \ref{thm:simples}: in (a) it reflects each individual component of the quiver, while in (b) it reflects one component into the other.

Note that (each connected component of) this quiver appears also in  \cite{senary,catdmod}, and $(Q,I)$ has finitely many (isomorphism classes of) indecomposable representations that can be described explicitly \cite[Theorem 2.11]{catdmod}.

\subsection{Characters of equivariant $\D$-modules}
\label{sec:character}

Since $G$ acts on $X$ with finitely many orbits, any equivariant coherent $\D$-module is admissible as a $G$-representation by \cite[Proposition 3.14]{catdmod}. In this section, we describe explicitly the $G$-module structure of all the simple equivariant $\D$-modules. The techniques we use are based on the those in \cite[Section 4]{senary}. 

The characters of $S$ and $E$ are given by (\ref{eq:charS}) and (\ref{eq:charE}), respectively, and that of $S_f$ by (see \cite[(2.6)]{bindmod} for more details on the notation)
\begin{equation}\label{eq:charSf}
[S_f] = \lim_{n\to \infty} [f^{-n}\cdot S] =\frac{1}{(1-tX)(1-t^2 \lie')(1-t^3 X)(1-t^4 X_4)} \cdot t^{4 \Z},
\end{equation}
where $t^{4 \Z}=\sum_{i\in \Z} t^{4i}$. Clearly, $[S_f \cdot f^{i/4}]= [S_f] \cdot t^{i}$, for $i=0,1,2,3$. In particular, we get formulas for $[L_{4,i}]$, for $i=1,3$.

In this section we compute the character $[\D f^{r_1+1}]$ in a uniform matter, and explain how this can be used to compute the characters of all the other $G$-equivariant simple $\D$-modules, with the exception of $L'_2$ in the case $(G',X)=(C_3, \omega_3)$ which will be considered separately in Section \ref{sec:loccoh}. 

First, consider the case when $(G',X)$ is $(A_5,\omega_3),\, (D_6, \omega_5)$ or $(E_7,\omega_6)$. We readily obtain the character of $L_3$ from $[L_3]=[S_f]-[S]-[E]$. If we know $[\D f^{r_1+1}] = [L_4']$, from $\F(L_1) \cong L_4'$ we have by (\ref{eq:fourier}) the relation $[L_1]=[L_4']^\ast\cdot t^{-6m-8}$, and also get $[L_2]=[S_f  \cdot \sqrt{f}]-[L'_4]-[L_1]$.

Similarly when $(G',X)=(C_3,\omega_3)$, given $[\D f^{r_1+1}]=[\D f^{-1}]$ yields $[L_3]=[\D f^{-1}] - [S]$ and $[L_2']=[S_f] - [\D f^{-1}]$. By Fourier transform, we get using (\ref{eq:fourier}) also $[L_1]$ and $[L_4']$ from $\F(L_3) \cong L_1$ and $\F(L_2') \cong L_4'$.

Recall the notation in (\ref{eq:charS}). We have the following formula.

\begin{theorem}\label{thm:charDfr1}
The $G$-character of $D f^{r_1+1}$ is given by
\[[D f^{r_1+1}] =\frac{t^{4(r_1+1)}}{(1-t^{-1} X)(1-X_4)(1-t X)(1-t^2 \lie')(1-t^4)}.\]
\end{theorem}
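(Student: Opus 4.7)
The plan is to compute $[\D f^{r_1+1}]$ by realizing $\D f^{r_1+1}$ as a cyclic $\D_X$-module generated by the $G'$-invariant vector $f^{r_1+1}$ in degree $4(r_1+1)$, then extracting the character from annihilating relations together with a Borel--Weil--Bott argument, in the spirit of \cite[Section~4]{senary}. The factor $t^{4(r_1+1)}$ in the formula is simply the degree of this generator.

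The essential annihilating relation is obtained from Lemma \ref{lem:localbs} at $s=r_1$: one has $\partial_{\lie'} \cdot f^{r_1+1} = 0$, so by $G'$-equivariance the entire $\lie'$-isotype of degree-$(-2)$ constant coefficient operators annihilates $f^{r_1+1}$, giving a surjection $\D_X / \D_X \cdot \lie' \twoheadrightarrow \D_X f^{r_1+1}$. I would verify that this identifies the module of interest by matching characteristic cycles with Theorem \ref{thm:simples}: in case (a) the target is the simple $L_4'$ with $\charC(L_4') = [\ol{T^\ast_{O_4}X}] + [\ol{T^\ast_{O_3}X}]$; in case (b) it is $\D f^{-1}$, fitting in $0 \to S \to \D f^{-1} \to L_3 \to 0$ with combined characteristic cycle $[\ol{T^\ast_{O_3}X}]+[\ol{T^\ast_{O_0}X}]$. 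In both cases the support equals $\ol{O}_1$ inside $X \cong X^*$, which is the zero locus of the ideal $(\lie') \subset \Sym(X)$ by the discussion preceding Section \ref{subsec:funds}.

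To produce the claimed denominator I would identify a set of five $U$-highest-weight vectors that Cartan-generate $(\D_X f^{r_1+1})^U$ as a polynomial ring. Three of these come from multiplication by the $S^U$-generators of weights $(t,X)$, $(t^2,\lie')$, and $(t^4,\op{triv})$ listed in (\ref{eq:charS}), contributing the factors $(1-tX)(1-t^2\lie')(1-t^4)$; the remaining two, of weights $(t^{-1},X)$ and $(1,X_4)$, come from constant-coefficient operators that survive modulo the $\lie'$-ideal and contribute $(1-t^{-1}X)(1-X_4)$. The two ``missing'' $S^U$-generators of bidegrees $(t^3,X)$ and $(t^4,X_4)$ in (\ref{eq:charS}) are absorbed by the action of fourth-order operators that implement division by $f$ modulo $\Ann(f^{r_1+1})$, which precisely matches the $t^{-4}$-shift between the pairs $(t^3X, t^{-1}X)$ and $(t^4X_4, X_4)$. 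Borel--Weil--Bott (Theorem \ref{thm:bott}) applied to a Kempf-style collapsing of $\ol{O}_1 \subset X$ via the parabolic $P \subset G'$ stabilizing the highest-weight line of $X$ then confirms the vanishing of any higher-cohomology correction.

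The main obstacle is to establish uniformly across the four Dynkin types $C_3, A_5, D_6, E_7$ that these five $U$-generators are algebraically independent and freely generate $(\D_X f^{r_1+1})^U$; I expect this to follow from the uniform Kempf-collapsing picture of the subexceptional series. A useful a posteriori consistency check is to feed the resulting $[\D f^{r_1+1}]$ into the Fourier transform and Jordan--Hölder identities from Section \ref{sec:simples} and verify that the characters of $L_1, L_2, L_3, L_4'$ obtained this way are $G$-admissible with non-negative multiplicities.
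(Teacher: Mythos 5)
Your starting point---the annihilation relation $\partial_{\lie'}\, f^{r_1+1}=0$ from Lemma \ref{lem:localbs}, yielding a surjection $\D_X/\D_X\lie' \twoheadrightarrow \D_X f^{r_1+1}$---is also the engine of the paper's proof, but from there the arguments diverge. The paper sets $p=-r_1-1$, filters $\D f^{-p}$ by the $G$-equivariant ideals $I_k=\{s\in S: s\,f^{-p-k}\in \D f^{-p}\}$, uses $[S^*/(V)]=1/(1-t^{-1}X)$ to show each $I_k$ is generated by the single irreducible of highest weight $kX$ in degree $3k$, describes the highest weights appearing in $\tilde I_k=(I_k+(f))/(f)\subset S/(f)$, and then closes with the combinatorial rewriting of \cite[Lemma~4.2]{senary}. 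Your proposal instead tries to exhibit five algebraically independent ``$U$-generators'' directly, which is a genuinely different and more ambitious route.

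There are, however, concrete gaps. First, your degree-$0$ generator of weight $X_4$ has no valid source: the constant-coefficient operators that survive modulo $\D\lie'$ form $S^*/(V)$, whose $G$-character $1/(1-t^{-1}X)$ contains only the weights $kX$ in degree $-k$; nothing of weight $X_4$ in degree $0$ lives there. The ``absorption via division by $f$'' ($t^4 X_4\cdot t^{-4}=X_4$, $t^3X\cdot t^{-4}=t^{-1}X$) is a comparison of rational expressions, not a $\D$-module operation---$\D f^{r_1+1}$ is not $S_f$-stable, so there is no map implementing division by $f$. Second, a Kempf-style collapsing of $\ol{O}_1$ cannot directly compute the $G$-character of $\D f^{r_1+1}$: the module has full support on $X$, and $\ol{O}_1$ only controls the projection of its characteristic variety to $X^*$ (equivalently, the support of its Fourier transform). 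Borel--Weil--Bott over that resolution computes cohomology of bundles on $\ol{O}_1$, not isotypic multiplicities of a full-support $\D$-module (that geometry is what the paper instead uses in Section \ref{sec:loccoh} to compute local cohomology, a different quantity). Third, as you note yourself, the crucial claim that the five proposed weights freely and independently account for all highest weights of $\D f^{r_1+1}$ is exactly the content of the theorem and is left unproven, so the argument as written is circular. The paper's $I_k$-filtration, together with the explicit description of $\tilde I_k$, is precisely what supplies the missing freeness and multiplicity-one statement.
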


\begin{proof}
As this follows essentially the same way as \cite[Theorem 4.1]{senary}, we only sketch the proof to display its uniformity and refer to \textit{loc. cit.} for additional details. Put $p=-r_1-1$. For $k\in \bb{Z}_{\geq 0}$, let $I_k$ (resp. $\tilde{I}_k$) be the following $G$-equivariant ideal of $S$ (resp. of $S/(f)$):
\[ I_k = \{ s \in S \mid s \cdot f^{-p-k} \in D_X f^{-p}\}\, \mbox{ and } \, \tilde{I}_k = (I_k+(f))/(f).\]
Let $S^* \subset \D_X$ denote the polynomial ring in the partial derivatives. Denote by $\partial_1 \in S^*$ the operator of degree $-1$ of the same highest weight as $X$, and by $V$ the (unique) representation of $S^*$ isomorphic to $\lie'$ in degree $-2$ (see (\ref{eq:charS})).  By Lemma \ref{lem:localbs}, the ideal $(V)$ generated by $V$ annihilates $f^{-p}$. We have as a $G$-module (see \cite[Chapter 5, Exercise 8]{jerzy})
\[[S^*/(V)] = \frac{1}{1-t^{-1}X}.\]
This shows that the ideal $I_k$ is generated by the elements in the irreducible representation with highest weight vector $(\partial_1^k \cdot f^{-p})\cdot f^{p+k}$, of weight $k \cdot X$ in degree $3k$. As this vector is not in $(f)$, the ideal $\tilde{I}_k$ of $S/(f)$ is generated by the elements in the (unique) representation of highest weight $k\cdot X$ in degree $3k$. We deduce that the highest weights in $\tilde{I}_k$ are of the form
\[t^{a+2b+3c+4d} \cdot\left( (a+c)X+b \cdot \lie' + d \cdot X_4 \right), \,\, \mbox{ where } a,b \geq 0,\, c+d \geq k.\] 
As in the proof of \cite[Theorem 4.1]{senary}, we can readily rewrite this in the desired formula using \cite[Lemma 4.2]{senary}.
\end{proof}


We are left to determine the character of the $\D$-module $L_2\cong H^4_{\ol{O}_4}(S)$ in the case when $(G',X)=(C_3,\omega_3)$. The formula (see Theorem \ref{thm:D2}) is postponed until Section \ref{sec:C3O2} as we calculate it using a different approach.

\section{Local cohomology}
\label{sec:loccoh}

In this section, we determine all the local cohomology modules of the coordinate ring supported in the orbit closures. The strategies used are similar to the ones in \cite[Section 5]{senary}.

\subsection{Local cohomology of $S$}\label{subsec:ring}

The goal in this section is to prove the following theorem. 

\begin{theorem}\label{thm:locring}
The following are all the non-zero local cohomology modules of $S$ with support in an orbit closure:
\begin{itemize}
\item[(a)] When $(G',X)$ is $(A_5,\omega_3),\, (D_6, \omega_5)$ or $(E_7,\omega_6)$:
\begin{itemize}
\item[(0)] $H^{6m+8}_{O_0}(S) = E$. \smallskip
\item[(1)] $H^{3m+4}_{\ol{O}_1}(S) = L_1, \;\; H^{4m+5}_{\ol{O}_1}(S) = E, \;\; H^{5m+5}_{\ol{O}_1}(S) = E$.\smallskip
\item[(2)] $0\to L_2 \to H^{m+3}_{\ol{O}_2}(S) \to L_1 \to 0, \; \; H^{2m+3}_{\ol{O}_2}(S) = L_1, \; \; H^{3m+4}_{\ol{O}_2}(S) = E.$\smallskip
\item[(3)] $0 \to L_3 \to H^1_{\ol{O}_3} (S) \to E \to 0.$
\end{itemize}
\item[(b)] When $(G',X)$ is $(C_3,\omega_3)$:
\begin{itemize}
\item[(0)] $H^{6m+8}_{O_0}(S) = E$. \smallskip
\item[(1)] $0\to L_1 \to H^{3m+4}_{\ol{O}_1}(S) \to E \to 0$.\smallskip
\item[(2)] $H^{m+3}_{\ol{O}_2}(S)=L_2, \;\; H^{3m+4}_{\ol{O}_2}(S)=E.$ \smallskip
\item[(3)] $0 \to L_3 \to H^1_{\ol{O}_3} (S) \to L_2' \to 0.$
\end{itemize}
\end{itemize}
\end{theorem}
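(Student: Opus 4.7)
The plan is to proceed inductively from the smallest orbit closure outward. The case $\ol O_0=\{0\}$ is standard: since $S$ is a regular ring of dimension $6m+8$ and $\{0\}$ is cut out by a regular sequence of linear forms, $H^i_{\{0\}}(S)=0$ for $i\ne 6m+8$, and the top local cohomology is the graded injective hull of the residue field, which equals $\F(S)=E$ by (\ref{eq:fourier}).

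For $\ol O_3=V(f)$, a hypersurface, only $H^1_{\ol O_3}(S)=S_f/S$ is non-zero. To describe it, I would analyze the filtration $S \subsetneq \D f^{-1} \subsetneq \D f^{-2} \subsetneq \cdots$ of $S_f$, using the fact that the step $\D f^{-n} \subsetneq \D f^{-n-1}$ is strict if and only if $-n-1$ is a root of $b_f(s)$. In case (a), the only integer roots are $-1$ and $r_3$, so $S_f=\D f^{r_3}$ with two consecutive quotients $L_3$ and $E$ by Theorem \ref{thm:simples}(a), matching part (a)(3). In case (b), the integer roots are $-1$ and $-2$, giving $S_f=\D f^{-2}$ with quotients $L_3$ and $L_2'$, matching part (b)(3).

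For the remaining orbit closures $\ol O_1$ and $\ol O_2$, the plan is to use the excision triangle for each pair $\ol O_i\subset \ol O_{i+1}$:
\[ R\G_{\ol O_i}(S) \to R\G_{\ol O_{i+1}}(S) \to Rj_\ast R\G_{O_{i+1}}(S|_U) \to +1,\]
where $U=X\setminus \ol O_i$ and $j\colon U \hookrightarrow X$. Since $O_{i+1}$ is smooth closed in $U$ of codimension $c_i\in\{1,m+3,3m+4\}$, one has $R\G_{O_{i+1}}(S|_U) = (L_{i+1}|_U)[-c_i]$. The derived pushforward $Rj_\ast(L_{i+1}|_U)$ can in turn be analyzed via the triangle on $X$ with $L_{i+1}$ in place of $S$, reducing the problem to local cohomology of simple $\D$-modules whose supports and characters are known. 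Working downward from $i=2$ to $i=0$, I would combine this with the characters of Section \ref{sec:character} (in particular Theorem \ref{thm:charDfr1}) to pin down the composition factors and their multiplicities in each $H^\bullet_{\ol O_i}(S)$.

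The hard part will be upgrading this composition-series information to the precise $\D$-module structure, i.e.\ deciding which extensions split and which do not. Here the quiver of Theorem \ref{thm:quiv} is decisive: whenever two consecutive composition factors of a local cohomology module correspond to vertices not connected by an arrow, the extension necessarily splits, and conversely a non-split extension obtained geometrically is consistent with the existence of an arrow. The exact sequence $0\to L_2\to H^{m+3}_{\ol O_2}(S)\to L_1\to 0$ in case (a) corresponds to the arrow between $L_2$ and $L_1$ in the quiver; in case (b), the fact that $L_2$ is an isolated vertex instead forces $H^{m+3}_{\ol O_2}(S)=L_2$ to be simple, reflecting the exceptional behaviour of the $(C_3,\omega_3)$ case. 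The Fourier transform identities $\F(S)=E$, $\F(L_3)=L_1$, $\F(L_2)=L_2$ (and $\F(L_2')=L_4'$ in case (b)) provide additional consistency checks via (\ref{eq:fourier}), matching the appearance of $E$ at the top degree of each $H^\bullet_{\ol O_i}(S)$.
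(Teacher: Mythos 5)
Your handling of $\ol O_0$ and $\ol O_3$ is correct and essentially matches the paper: $\ol O_0$ is a complete intersection, and for $\ol O_3$ the paper simply cites Theorem~\ref{thm:simples}, which encodes the filtration of $S_f$ you sketch. The difficulty is concentrated in $\ol O_1$ and $\ol O_2$, and there your proposal has a genuine gap. The excision triangle $R\G_{\ol O_i}(S)\to R\G_{\ol O_{i+1}}(S)\to Rj_\ast R\G_{O_{i+1}}(S|_U)\to+1$ is a sound framework, but you never compute the third term: analyzing $Rj_\ast(L_{i+1}|_U)$ via a second triangle reduces to $R\G_{\ol O_i}(L_{i+1})$, which is local cohomology of a simple $\D$-module and is not a priori any easier than the original problem. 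The other tool you invoke, the characters from Section~\ref{sec:character}, cannot close this gap: characters determine the $G$-module content of $\bigoplus_j H^j_{\ol O_i}(S)$ but not the cohomological degree in which each composition factor sits. The placement -- e.g.\ that $E$ appears in $H^\bullet_{\ol O_1}(S)$ exactly in degrees $4m+5$ and $5m+5$ in case (a), and not at all above degree $3m+4$ in case (b) -- is a topological fact requiring additional geometric input.

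The paper supplies that input concretely where your plan is silent. For $\ol O_1$, it uses that $\ol O_1$ is the affine cone over $G/P$, so the $E$-multiplicities in $H^{>3m+4}_{\ol O_1}(S)$ are the Betti numbers of $G/P$ (by a theorem of Switala, Garc\'ia-L\'opez--Sabbah, Lyubeznik--Singh--Walther), read off the Poincar\'e polynomial of $G/P$ computed from Weyl group invariant degrees. For $\ol O_2$, the paper constructs a jet-bundle desingularization $Z=\Tot(\eta^\ast)\to\ol O_3$, expresses $H^\bullet_{\ol O_2}(S/(f))$ as a direct limit of sheaf cohomology on $G/P$ (Proposition~\ref{prop:localopen}), applies Borel--Weil--Bott together with the uniform decomposition in Lemma~\ref{lem:decomeg}, and transfers back to $H^\bullet_{\ol O_2}(S)$ via the long exact sequence from $0\to S(-4)\to S\to S/(f)\to 0$. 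The quiver structure you invoke \emph{is} used by the paper, but together with the explicit injective-hull identifications $H^{m+3}_{\ol O_2}(S)\cong S_f\sqrt f/\D f^{r_1+1}$ (case (a)) and $H^{3m+4}_{\ol O_1}(S)\cong S_f\sqrt f/\D f^{r_2+1}$ (case (b)); the quiver alone only constrains which extensions are \emph{possible}, it does not determine whether a geometrically produced extension actually splits, nor in which cohomological degree each simple occurs.
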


Part (3) follows by Theorem \ref{thm:simples} since $H^1_{\ol{O}_3}(S)=S_f/S$. The non-trivial parts are (1) and (2). 

Since $H^{m+3}_{\ol{O}_2}(S)$ (resp. $H^{3m+4}_{\ol{O}_1}(S)$) is the injective envelope of $L_2$ in $\opmod_G^{\ol{O}_2}(\D_X)$ (resp. of $L_1$ in $\opmod_G^{\ol{O}_1}(\D_X)$) by \cite[Lemma 3.11]{catdmod}, the claim about their structures follows by our description of the quiver of $\opmod_G(\D_X)$ in Theorem \ref{thm:quiv}.  In fact, in case (a) (resp. case (b)) we have an isomorphism $H^{m+3}_{\ol{O}_2}(S)\cong S_f \sqrt{f}/\D f^{r_1+1}$ (resp.  $H^{3m+4}_{\ol{O}_1}(S)\cong S_f \sqrt{f}/\D f^{r_2+1}$). 

We proceed with part (1).  Since here we consider local cohomology supported in the cone over a smooth projective variety, there are several results available in this direction \cite{Ogus},\cite{garsab},\cite{switala2015lyubeznik},\cite{LSW},\cite{HP}.

\begin{prop}\label{prop:locO1}
Apart from $H^{3m+4}_{\ol{O}_1}(S)$, the only non-zero local cohomology modules of $S$ with support in $\ol{O}_1$ are $H^{4m+5}_{\ol{O}_1}(S) = H^{5m+5}_{\ol{O}_1}(S) = E$ when $(G',X)$ is $(A_5,\omega_3),\, (D_6, \omega_5)$ or $(E_7,\omega_6)$.
\end{prop}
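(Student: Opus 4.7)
\emph{Reduction to $E$-multiplicities.} By Theorem~\ref{thm:quiv}(a), the simples $L_1$ and $E$ lie in disjoint connected components of the quiver of $\opmod_G(\D_X)$, and neither vertex carries a self-loop. Hence the full subcategory $\opmod_G^{\ol{O}_1}(\D_X)$ is semisimple, and
\[H^i_{\ol{O}_1}(S)\,\cong\,L_1^{a_i}\oplus E^{b_i}\]
for non-negative integers $a_i,b_i$. Localising at a point of the smooth orbit $O_1$ (of codimension $3m+4$ in $X$) and applying Kashiwara's theorem force $a_i=\delta_{i,3m+4}$, reducing the proposition to establishing $b_i=\delta_{i,4m+5}+\delta_{i,5m+5}$.

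\emph{Long exact sequence for $\{0\}\subset\ol{O}_1$.} Set $U:=X\setminus\{0\}$. The exact triangle
\[R\Gamma_{\{0\}}(S)\,\lra\,R\Gamma_{\ol{O}_1}(S)\,\lra\,R\Gamma\bigl(U,\,R\underline{\Gamma}_{O_1}(\mathcal{O}_U)\bigr),\]
combined with $H^i_{\{0\}}(S)=0$ for $i\neq 6m+8$ and the smoothness of $O_1\subset U$ (so $R\underline{\Gamma}_{O_1}(\mathcal{O}_U)$ is concentrated in degree $3m+4$), yields
\[H^i_{\ol{O}_1}(S)\,\cong\,H^{i-(3m+4)}\bigl(U,\,\underline{H}^{3m+4}_{O_1}(\mathcal{O}_U)\bigr)\qquad\text{for }3m+5\leq i\leq 6m+6,\]
while the extreme values $i=6m+7,\,6m+8$ are controlled by the boundary maps involving $H^{6m+8}_{\{0\}}(S)=E$ together with the top-cohomology computation below.

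\emph{Descent to $Y$ and Borel--Weil--Bott.} The projection $\pi:O_1\to Y:=G'/P$ onto the closed highest-weight orbit $Y\subset\mathbb{P}(X^\ast)$ is a $\mathbb{C}^\ast$-bundle realising $O_1$ as the complement of the zero section in $\mathcal{O}_Y(-1)$; accordingly $N_{O_1/U}\cong\pi^\ast N_Y$, where $N_Y$ is the normal bundle of $Y\subset\mathbb{P}(X^\ast)$. The sheaf $\underline{H}^{3m+4}_{O_1}(\mathcal{O}_U)$ carries a canonical ascending filtration whose graded pieces are $(i_1)_\ast\bigl(\Sym^n N_{O_1/U}\otimes\det N_{O_1/U}\bigr)$ for $n\geq 0$, and combining this with the Leray spectral sequence for the affine bundle $\pi$ expresses the required cohomology as
\[\bigoplus_{n\geq 0,\,d\in\mathbb{Z}}H^k\bigl(Y,\,\Sym^n N_Y\otimes\det N_Y\otimes\mathcal{O}_Y(d)\bigr).\]
Each summand is the sheaf cohomology of a homogeneous bundle on $Y=G'/P$ and is computed via the Borel--Weil--Bott theorem (Theorem~\ref{thm:bott}). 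The main technical obstacle is this enumeration: for each pair $(n,d)$ one decomposes $\Sym^n N_Y$ into $P$-irreducible summands, applies the affine Weyl-group algorithm to the resulting highest weights twisted by $\det N_Y\otimes\mathcal{O}_Y(d)$, and tracks the non-singular ones. The uniformity across $A_5,D_6,E_7$ ultimately reflects the fact that $\mathcal{O}_Y(1)$ has index $2m+2$ with respect to $\omega_Y$ in all three cases, and the final output is that only $k=m+1$ and $k=2m+1$ (i.e.\ $i=4m+5$ and $i=5m+5$) carry non-vanishing cohomology, each contributing a single copy of $E$ by comparison with (\ref{eq:charE}).
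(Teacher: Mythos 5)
Your reduction step is sound: by Theorem~\ref{thm:quiv}(a) the simples $L_1$ and $E$ sit in different components of the quiver with no self-loops, so $\opmod_G^{\ol{O}_1}(\D_X)$ is semisimple, each $H^i_{\ol{O}_1}(S)$ is of the form $L_1^{a_i}\oplus E^{b_i}$, and smoothness of $O_1$ plus Kashiwara's theorem pin down the $L_1$-part. The exact triangle for $\{0\}\subset\ol{O}_1\subset X$ is also correctly set up.

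Where you diverge from the paper is in how the $E$-multiplicities $b_i$ are determined. The paper does not compute any normal-bundle cohomology at all: it invokes the theorem of Ogus/Garc\'{\i}a--Sabbah/Lyubeznik--Singh--Walther/Switala relating local cohomology of $S$ supported in the affine cone over a smooth projective variety $Y=G/P$ directly to the singular (de Rham) cohomology of $Y$, and then reads the Betti numbers of $Y$ off its Poincar\'e polynomial, which is available in closed form from the factorization formulas for Weyl groups (Humphreys, Sections~1.11 and~3.15). This is a short, conceptual argument.

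Your proposal replaces that black-box theorem by a direct Borel--Weil--Bott computation of $\bigoplus_{n\geq 0,\,d\in\Z}H^k(Y,\,\Sym^n N_Y\otimes\det N_Y\otimes\O_Y(d))$, and here there is a genuine gap. First, what you have is not a direct sum but the $E_1$-page of a spectral sequence coming from the pole-order filtration of $\underline{H}^{3m+4}_{O_1}(\O_U)$; nothing guarantees degeneration, and the analogous spectral sequences that the paper actually uses for $\ol{O}_2$ in Section~\ref{sec:loccoh} do have nontrivial differentials that must be tracked carefully (cf.\ the proofs of Propositions~\ref{prop:C3O2} and~\ref{prop:O2other}). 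Second, and more seriously, the central claim -- that the Bott algorithm applied to the irreducible constituents of $\Sym^n N_Y$ twisted by $\det N_Y\otimes\O_Y(d)$ produces nonzero cohomology only in degrees $k=m+1$ and $k=2m+1$ -- is simply asserted, not proved. Unlike the cotangent bundle $\Omega_{G/P}$ (whose symmetric powers decompose multiplicity-freely by Lemma~\ref{lem:decomeg}), the normal bundle $N_Y$ of $Y\subset\P(X)$ has rank $3m+3$ and is not an irreducible homogeneous bundle, so its symmetric powers do not have a readily available plethysm decomposition; this is exactly the computation your proposal labels ``the main technical obstacle'' and leaves unresolved. As written, the plan identifies the right objects and correctly guesses the answer, but it does not constitute a proof: the hard content (here, the Betti number count that the paper obtains essentially for free from the cited cone theorem) has been pushed into a calculation that is neither carried out nor shown to be tractable.
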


\begin{proof}
We only need to determine how many copies of $E$ appear in $H^j_{\ol{O}_1}(S)$ for $j>3m+4$, as the latter is always supported on the origin. Note that the highest weight orbit $\ol{O}_1$ is the affine cone over some (partial) flag variety $G/P$. Hence, it is enough to determine the Betti numbers of $G/P$, according to \cite[Main Theorem 1.2]{switala2015lyubeznik} (see also \cite[Theorem]{garsab} and \cite[Theorem 3.1]{LSW}). These numbers are given by the Bruhat decomposition. The Poincar\'e polynomials encoding them are known, and can be computed case-by-case as explained in \cite[Sections 1.11 and 3.15]{hump} using factorization methods.

Let us consider first the case $(G',X)=(C_3,\omega_3)$. By \cite[Section 1.11]{hump} (see also the remark in  \textit{ibid.}, p. 74), the Poincar\'e polynomial of $G/P$ can be written as the quotient of the Poincar\'e polynomials of $G' = \Sp_3$ and $\SL_3$. The respective degrees of fundamental invariants under the action of their Weyl groups are $2,4,6$ and $2,3$. Therefore, by \cite[Theorem 3.15]{hump} their respective Poincar\'e polynomials are
\[\dfrac{(1-q^4)(1-q^8)(1-q^{12})}{(1-q^2)^3}, \quad \mbox{and} \qquad \dfrac{(1-q^4)(1-q^6)}{(1-q^2)^2}.\]
Hence, the Poincar\'e polynomial of $G/P$ is
\begin{equation}\label{eq:poin}
 P_1(q) = (1+q^6) \cdot \frac{1-q^8}{1-q^2}.
\end{equation}

Now let $(G',X)$ be $(A_5,\omega_3),\, (D_6, \omega_5)$ or $(E_7,\omega_6)$, when $m=2,4,8$, respectively. By a similar argument, we obtain that Poincar\'e polynomial of $G/P$ is:
$$
P_m(q)=(1+q^{m+2})(1+q^{22-32/m})\cdot \frac{1-q^{3m+4}}{1-q^2}.
$$
\end{proof}

We are left with proving both parts (2) of Theorem \ref{thm:locring}, which we devote the rest of the section to.

The variety $\ol{O}_3$ is the projective dual of the highest weight orbit, hence given by a discriminant in the sense of \cite{jerzydisc}. Thus, it has a desingularization as the total space $Z=\Tot(\eta^*)$ of a bundle $\eta$ of 1-jets on $G/P$, as described in \cite[Section 1]{jerzydisc}. The space $Z$ is a subbundle of the trivial bundle $G/P \times X$, and we denote the first and the second projection (which yields the desingularization of $\ol{O}_3$) by 
\[p_1:Z \lra G/P, \;\;\;\;\; p_2 : Z \lra \ol{O}_3.\]
We denote by $\xi$ the locally free sheaf on $G/P$ corresponding to the quotient bundle obtained from the inclusion $Z\subset G/P \times X$. Hence, we have the following exact sequence of locally free sheaves on $G/P$:
\[ 0\to \xi \to X\oo \O_{G/P} \to \eta \to 0.\] 

We give the following uniform description of the bundles $\eta$ and $\mu$ following \cite[Section 1]{jerzydisc} (see also \cite[Section 9.3]{jerzy}). The group $P$ is a maximal parabolic which can be represented by distinguishing the corresponding node in the Dynkin diagram. We have $\eta=\eta'(-1)=\eta'\oo \O(1)$ (here $\O(1)\cong \V(X)$ is the twisting sheaf), where $\eta'$ fits in a sequence
\begin{equation}\label{eq:eta}
0\rightarrow \Omega_{G/P}\rightarrow \eta'\rightarrow \O_{G/P}\rightarrow 0.
\end{equation}

The cotangent bundle $\Omega_{G/P}$ in terms of $P$-dominant weights can be described by labeling the distinguished node by $-2$ and the rest of the nodes with the number of edges connecting them to the distinguished node:

\begin{itemize}
\item $(C_3,\omega_3)$: \, $\V(0,2,-2) = S_2 \R$, where the latter is the second symmetric power of the 3-dimensional isotropic tautological subbundle $\R$ (see \cite[Chapter 4, Exercise 9]{jerzy});

\item $(A_5,\omega_3)$: \, $\V(0,1,-2,1,0) = \R \oo Q^*$, where $\R$ (resp. $\Q$) is the tautological subbundle (resp. quotient bundle)  (see \cite[Proposition 3.3.5]{jerzy});

\item $(D_6,\omega_5)$: \, $\V(0,0,0,1,-2,0) = \bw^2 \R$, where $\R$ denotes the 6-dimensional tautological subbundle (see \cite[Chapter 4, Exercise 10]{jerzy});
\item $(E_7,\omega_6)$: \, $\V(0,0,0,0,1,-2,0)$, which is induced from the 27-dimensional representation $V_{\omega_1}$ of $E_6$.
\end{itemize}

Therefore, we can write uniformly $\Omega_{G/P} = \V(X_4-2\cdot X)$.

\begin{lemma}\label{lem:decomeg}
For any $d\geq 0$, we have a decomposition of bundles
\[\Sym_d  \Omega_{G/P} = \bigoplus_{\substack{a,b\geq 0, \,\, c\geq a+b \\ 3c-2b-a=d}} \V(a\cdot \lie'+b\cdot X_4-2c\cdot X).\]
\end{lemma}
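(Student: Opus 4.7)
My plan is to decompose $\Sym_d\Omega_{G/P}$ case by case using the explicit description of $\Omega_{G/P}$ as an irreducible homogeneous bundle given just above the statement of the lemma, and then reparametrise the answer to match the uniform $(a,b,c)$-indexing.

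First, since $\Omega_{G/P}=\V(X_4-2X)$ is irreducible as an $L(P)$-bundle, $\Sym_d\Omega_{G/P}$ is a direct sum of irreducible bundles $\V(\lambda)$ for various $P$-dominant weights $\lambda$, and it suffices to compute the decomposition of the $L(P)$-representation on the fibre over $eP$. The central $\C^\ast$ of $L(P)$ scales $\Omega_{G/P,eP}$ by the character corresponding to $-2X$, so every summand of $\Sym_d$ of this fibre has central character $-2d\cdot X$; writing the $\omega_i$-coefficient at the distinguished node as $-2c$, this is exactly the constraint $3c-2b-a=d$ after equating the two expressions of the central character on the weight $a\lie'+bX_4-2cX$. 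Thus the degree identity is automatic, and the real content is to identify \emph{which} $L(P)^{\mathrm{ss}}$-highest weights arise, and to see that they are precisely the $a\lie'+bX_4$ with $a,b\ge 0$ and $c\ge a+b$.

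The substantive step is then a four-fold case check against classical plethysm formulas. For $(C_3,\omega_3)$ the fibre is $\Sym_2$ of the standard $3$-dimensional $\operatorname{SL}_3$-module, and I would invoke the Cauchy--Littlewood identity $\Sym_d(\Sym_2 V)=\bigoplus_{\lambda\vdash d,\ \ell(\lambda)\le 3}S_{2\lambda}V$. For $(A_5,\omega_3)$ the fibre is the tensor product of two standard $\operatorname{SL}_3$-modules, and Cauchy's formula gives $\Sym_d(V\otimes W)=\bigoplus_{\lambda\vdash d,\ \ell(\lambda)\le 3}S_\lambda V\otimes S_\lambda W$. For $(D_6,\omega_5)$ the fibre is $\bigwedge^2$ of a standard $\operatorname{SL}_6$-module, and Littlewood's formula $\Sym_d(\bigwedge^2 V)=\bigoplus S_\lambda V$ (over partitions whose conjugate has only even parts) applies. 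For $(E_7,\omega_6)$ the fibre is the $27$-dimensional representation of $E_6$; here the required plethysm can be read off from the $E_6$-decomposition of the coordinate ring of $\mathfrak{u}_P^\ast$, which is classical (e.g. Vinberg--Popov, or as the character of the Cayley algebra ring of the Jordan algebra underpinning the third row of the Freudenthal--Tits square).

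The final step is pure bookkeeping: in each case the relevant partition $\lambda$ has three ``visible'' parameters, which I reparametrise as $a,b,k\ge 0$ with $c:=a+b+k$. A direct check (using the formulas $\lie'=2\omega_1,\ X_4=2\omega_2$ in type $C_3$, and analogously in the other types) shows that the $L(P)^{\mathrm{ss}}$-highest weight of $S_{2\lambda}\R^\ast$, resp.\ $S_\lambda\R\otimes S_\lambda\Q^\ast$, etc., is exactly $a\lie'+bX_4$, and that the allowed partitions are precisely those with $c\ge a+b$. The main obstacle is the $(E_7,\omega_6)$ case, where the plethysm is not as elementary as in the three classical cases; once it is in hand, however, the formulation becomes uniform and identical to the other three, as the lemma asserts.
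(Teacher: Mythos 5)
Your approach is essentially the same as the paper's: a case-by-case application of the classical plethysm formulas (Cauchy for $\Sym_d\circ\Sym_2$ and $\Sym_d\circ(V\otimes W)$, Littlewood for $\Sym_d\circ\bigwedge^2$), with the $(E_7,\omega_6)$ case handled by a separate, non-elementary reference. The paper's proof simply cites \cite[Proposition 2.3.8]{jerzy} for $C_3$ and $D_6$, \cite[Section 5.1]{senary} for $A_5$, and \cite[Section 4]{johnson} for $E_7$; you unwind those same formulas rather than citing them, and your observation that the constraint $3c-2b-a=d$ is automatic from the central character of $L(P)$ is correct and makes the bookkeeping transparent.
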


\begin{proof}
The case $(C_3,\omega_3)$ follows by \cite[Proposition 2.3.8]{jerzy} (since $\V(0,2,-2) = S_2 \R$). For the case $(A_5,\omega_3)$, see \cite[Section 5.1]{senary}. For $(D_6, \omega_5)$, this follows by \cite[Proposition 2.3.8]{jerzy} (since $\V(0,0,0,1,-2,0) = \bw^2 \R$). For $(E_7,\omega_6)$, see \cite[Section 4]{johnson}.
\end{proof}

The next three results are the extensions of \cite[Lemma 5.2, Propositions 5.3 and 5.4]{senary}.

\begin{lemma}\label{lem:nonsplit}
We have $H^0(G/P, \,\Sym \eta)=S/(f)$, and $H^i(G/P,\, \Sym \eta) = 0$ for $i\geq 0$. Moreover, $\eta'=\eta\oo \O(-1)$ is characterized as the unique nonsplit extension (up to isomorphism) in the sequence (\ref{eq:eta}).
\end{lemma}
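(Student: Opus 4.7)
The plan is to compute the cohomology of $\Sym \eta$ on $G/P$ by pushing the structure sheaf forward along the desingularization $p_2 \colon Z \to \ol{O}_3$, and to determine the relevant extension group via Borel--Weil--Bott.

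First, since $p_1 \colon Z \to G/P$ is the projection of a vector bundle, it is affine with $p_{1*} \O_Z = \Sym \eta$ and $R^i p_{1*} \O_Z = 0$ for $i > 0$. The Leray spectral sequence therefore gives $H^i(G/P, \Sym \eta) = H^i(Z, \O_Z)$ for all $i \geq 0$. By Lemma \ref{lem:ratO3}, $\ol{O}_3$ has rational singularities, so $p_{2*} \O_Z = \O_{\ol{O}_3}$ and $R^i p_{2*} \O_Z = 0$ for $i > 0$; combined with the affineness of $\ol{O}_3$, this yields $H^0(Z, \O_Z) = \Gamma(\ol{O}_3, \O_{\ol{O}_3}) = S/(f)$ and $H^i(Z, \O_Z) = 0$ for $i > 0$, giving the first two assertions (interpreting the vanishing statement as ``$i \geq 1$'').

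For the uniqueness of $\eta'$, the plan is to show $\Ext^1_{G/P}(\O_{G/P}, \Omega_{G/P}) \cong H^1(G/P, \Omega_{G/P})$ is one-dimensional, so that any two non-split extensions of $\O_{G/P}$ by $\Omega_{G/P}$ differ by a nonzero scalar and are therefore isomorphic. Using $\Omega_{G/P} = \V(X_4 - 2X)$, in each of the four cases one checks that $\rho + (X_4 - 2X)$ has the value $-1$ at the distinguished node of $P$ (i.e.\ the node corresponding to $X$) and that a single Weyl reflection at that node returns the weight to $\rho$; in the $C_3$ case this relies on the fact that the doubled edge directed from node $3$ toward node $2$ contributes the correct multiplicity of $2$ when combined with the coefficient $2$ coming from $X_4 = 2 \omega_2$. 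Theorem \ref{thm:bott} then yields $H^1(G/P, \Omega_{G/P}) = V_0^\ast = \C$ and the vanishing of the other cohomology groups, completing the argument.

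The main obstacle is handling the Borel--Weil--Bott step uniformly across the four Dynkin types, with the $C_3$ case requiring the most care due to the non-simply-laced edge. Everything else reduces to standard consequences of rational singularities (already secured in Lemma \ref{lem:ratO3} via $b$-function techniques) and the Picard-rank-one property of $G/P$ encoded in the cohomology calculation above.
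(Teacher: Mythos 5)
Your argument for the cohomology of $\Sym \eta$ is correct and is essentially an elaboration of the paper's one-line appeal to Lemma~\ref{lem:ratO3}: push forward along the affine map $p_1$, use rational singularities for $p_2$, and use affineness of $\ol{O}_3$. Your Borel--Weil--Bott computation of $H^1(G/P,\Omega_{G/P}) = \C$ is also correct (and is the same step the paper uses for uniqueness).

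However, there is a genuine gap: you prove that there is at most one isomorphism class of non-split extension in (\ref{eq:eta}), but you never show that $\eta'$ actually \emph{is} non-split. Knowing $\dim \Ext^1(\O_{G/P},\Omega_{G/P}) = 1$ tells you the extension group has exactly two classes (zero and non-zero); it says nothing about which class $\eta'$ represents, so ``characterized as the unique nonsplit extension'' is not yet established. The paper fills this in by a separate comparison: one has $H^1(G/P,\Sym_2\eta) = 0$ (which you already proved via rational singularities), whereas a Borel--Weil--Bott computation together with Lemma~\ref{lem:decomeg} shows $H^1\bigl(G/P,\Sym_2(\Omega_{G/P}\oo\O(1))\bigr) \neq 0$. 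If (\ref{eq:eta}) were split, then $\eta \cong (\Omega_{G/P}\oplus\O_{G/P})\oo\O(1)$ and $\Sym_2(\Omega_{G/P}\oo\O(1))$ would be a direct summand of $\Sym_2\eta$, contradicting the vanishing. You need to add a step of this kind (or, alternatively, identify the extension class of $\eta'$ with the Atiyah class $c_1(\O(1))$, which is nonzero since $\O(1)$ is ample) to close the gap.
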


\begin{proof}
As the proof is analogous to \cite[Lemma 5.2]{senary}, we only sketch the argument. The first claim follows by Lemma \ref{lem:ratO3}. The fact that the sequence  (\ref{eq:eta}) is non-split follows as in \textit{loc. cit.} from $H^1(G/P, \,\Sym_2 \eta)=0$ and $H^1(G/P, \, \Sym_2 (\Omega_{G/P} \oo \O(1) ) \neq 0$ (the latter follows by Theorem \ref{thm:bott} and Lemma \ref{lem:decomeg}). By Theorem \ref{thm:bott}
\[\Ext^1(\O_{G/P}, \,\Omega_{G/P}) = H^1(G/P, \, \Omega_{G/P}) = \C,\]
from which uniqueness follows.
\end{proof}

Denote the open $U=p_2^{-1}(O_3) \subset Z$. Since $p_2$ is $G$-equivariant and birational, we have $U\cong O_3$ as $G$-varieties.

\begin{prop}\label{prop:divisor}
We have $Z\setminus U = D$, where $D$ is a $G$-stable divisor on $Z$. Moreover, the ideal sheaf of $D$ is $p_1^* (\L)$, where $\O(1) \cong \L \subset \Sym_3 \eta$.
\end{prop}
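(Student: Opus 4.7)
The plan is to first identify $D$ set-theoretically, then construct the desired line bundle inclusion $\L \hookrightarrow \Sym_3 \eta$, and finally verify that it cuts out $D$ as a Cartier divisor.

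First I would show $D = p_2^{-1}(\ol{O}_2)$ set-theoretically. By Lemma \ref{lem:ratO3}, $\ol{O}_3$ is normal, so the birational $G$-equivariant map $p_2$ restricts to an isomorphism over its smooth locus. The singular locus of $\ol{O}_3$ is $G$-stable, proper, and closed, hence equals $\ol{O}_2$ by the orbit inclusions. Thus $D := Z \setminus U = p_2^{-1}(\ol{O}_2)$ is closed and $G$-stable since $p_2$ is $G$-equivariant.

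Next, I would construct the inclusion $\L \cong \O(1) \hookrightarrow \Sym_3 \eta$. Writing $\eta = \eta'(1)$ gives $\Sym_3 \eta = \Sym_3 \eta' \otimes \O(3)$, so it suffices to produce $\O(-2) \hookrightarrow \Sym_3 \eta'$. The extension (\ref{eq:eta}) induces a filtration of $\Sym_3 \eta'$ whose bottom subbundle is $\Sym_3 \Omega_{G/P}$. By Lemma \ref{lem:decomeg} applied with $d=3$ and $(a,b,c)=(0,0,1)$, the summand $\V(-2X) \cong \O(-2)$ appears irreducibly in $\Sym_3 \Omega_{G/P}$. The resulting composition $\O(-2) \hookrightarrow \Sym_3 \Omega_{G/P} \hookrightarrow \Sym_3 \eta'$ is a subbundle inclusion, and tensoring by $\O(3)$ yields $\L \cong \O(1) \hookrightarrow \Sym_3 \eta$ as required.

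Pulling back, $p_1^*\L$ gives an ideal inside $\Sym \eta = p_{1*}\O_Z$ via $p_1^*\Sym_3 \eta \hookrightarrow \O_Z$, cutting out a divisor $D' \subset Z$ (being the zero scheme of a nowhere vanishing section of a line bundle). To identify $D = D'$, observe that on each fiber $p_1^{-1}([P]) \cong \eta^*_{[P]}$ the cubic $c$ induced by $\L$ factors through the quotient $\eta^*_{[P]} \twoheadrightarrow T_{G/P,[P]} \otimes \O(-1)_{[P]}$ (dual of $\Omega(1) \hookrightarrow \eta$) and evaluates the distinguished $L(P)$-invariant cubic on the cotangent representation $\Omega_{G/P,[P]}$ (suitably twisted). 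In each of our four cases this cubic is classical: the $3 \times 3$ determinant for $(A_5, \omega_3)$, the determinant of a $3\times 3$ symmetric matrix for $(C_3, \omega_3)$, a Pfaffian-type cubic for $(D_6, \omega_5)$, and the Cartan cubic on the $27$-dimensional $E_6$-module for $(E_7, \omega_6)$.

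To conclude $D = D'$, by $G$-equivariance it is enough to compare the two on a generic point of each $G$-orbit in $Z$. Since $Z \to G/P$ is a $G$-equivariant vector bundle, $G$-orbits in $Z$ correspond to $P$-orbits in a single fiber $\eta^*_{[P_0]}$, reducing the comparison to a finite list of orbit representatives. The inclusion $D \subseteq D'$ then follows from verifying that $c$ vanishes on the representatives of orbits mapping into $O_0, O_1, O_2$, while the non-vanishing of $c$ on a representative of the orbit mapping into $O_3$ yields $D' \subseteq D$. The main technical obstacle is precisely this case-by-case verification: determining the $P$-orbit stratification of $\eta^*_{[P_0]}$ and confirming, in each of the four series, that the invariant cubic distinguishes those orbits mapping into $\ol{O}_2$ from those mapping into $O_3$.
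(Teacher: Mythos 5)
Your construction of the inclusion $\L \cong \O(1) \hookrightarrow \Sym_3 \eta$ via Lemma \ref{lem:decomeg} and the filtration induced by (\ref{eq:eta}) is exactly right, and your reduction of the comparison of $D$ and $D'$ to a single fiber $N$ of $Z \to G/P$ with its $P$-action is also the correct framework. However, there is a genuine gap in the final step, and it is precisely the point the paper's argument is designed to handle. You propose to finish by enumerating $P$-orbits in $N$ and checking the cubic $c$ on representatives, but $N$ is \emph{not} a sum of $P$-modules: as an $L$-module $N \cong \C \oplus V$, but the extension $0 \to \C \to N \to V \to 0$ is a non-split extension of $P$-modules (this is Lemma \ref{lem:nonsplit}). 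In particular, the $L$-orbit $(0, O'_n)$ — the open $L$-orbit of $V$ sitting in the zero section of the $L$-splitting — is a codimension-one locus on which $c$ does \emph{not} vanish. If its closure were $P$-stable, it would give a $G$-stable divisor in $Z$ disjoint from $D'$, and your desired identification $D = D'$ would fail. The paper rules this out precisely by invoking the non-splitness: a $P$-stable closure of $(0, O'_n)$ would furnish a $P$-stable complement to $\C$ in $N$. Your plan, which treats $G$-orbits in $Z$ as if they were read off from $L$-orbits of $V$ together with the $\C$-coordinate, does not see this obstruction and hence cannot rule out the problematic divisor.

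There is a second, more minor, gap: your reduction ``to a finite list of orbit representatives'' presupposes that $N$ has finitely many $P$-orbits, which you do not justify. The paper obtains this uniformly by observing that $V$ is a Vinberg-type representation attached to the grading of $\lie'$ at the distinguished node (hence has finitely many $L'$-orbits), and moreover that $(L', V)$ is a \emph{regular} prehomogeneous vector space, so that $h^{-1}(0)$ is exactly the complement of the open orbit. This regularity is what makes $D'$ the union of \emph{all} proper $L$-orbit closures of $(\C^*, V) \subset N$ in one stroke, avoiding any case-by-case identification of the classical cubics. Your identification of those cubics (determinant, symmetric determinant, Pfaffian, Cartan cubic) is correct, but the uniform argument via regularity plus non-splitness is both cleaner and actually closes the gap that your case analysis leaves open. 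Also, as a small slip, ``the zero scheme of a nowhere vanishing section'' should read ``the vanishing locus of a nonzero section'' — a nowhere vanishing section has empty zero scheme.
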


\begin{proof}
Using Lemma \ref{lem:decomeg}, we see that $\L = \O(1) \subset \Sym_3 (\Omega_{G/P}(1)) \subset \Sym_3 \eta$. By adjunction, we get an inclusion $p_1^*(\L) \subset \O_Z$, which gives a $G$-stable divisor $D \subset X$. As in the proof of \cite[Proposition 5.3]{senary}, we show that all closed $G$-stable proper subsets of $Z$ are contained in $D$. We give a uniform argument as follows. 

Consider the Levi decomposition $P\cong L \ltimes U(P)$. Write $T^*(G/P) = G\times_P V$, for a $P$-representation $V$. 
We write $Z\cong G\times_P N$, with $N$ fitting into the exact sequence of $P$-modules (\ref{eq:eta})
\begin{equation}\label{eq:pee}
0 \to \C \to N \to V \to 0,
\end{equation}
where $C^*$ acts on each space by scaling by the third power. There is a one-to-one correspondence between the $G$-stable closed subsets in $Z$ and the $P$-stable closed subsets in $N$, and $D$ corresponds to some $P$-stable divisor $D'\subset N$. We are left to show that all $P$-stable closed proper subsets in $N$ are contained in $D'$. 

As $\L \subset \Sym_3 (\Omega_{G/P} \oo \O(1))$, there is a corresponding $L$-semi-invariant polynomial $h$ on $V$ of degree 3, and $D'$ is given as the zero-set of $N \to V \xrightarrow{h} \C$. As $L$-modules, we have $N\cong \C \oplus V$. Write $L=L' \times \C^*$ according to $G=G' \times \C^*$. As the $L'$-module $V$ can be obtained using a construction due to Vinberg through a grading of $\lie'$ corresponding to the distinguished node (the node that we associated to $\Omega_{G/P}$), it has finitely many $L'$-orbits \cite{vinberg} (in fact, due to the multiplicity-free decomposition in Lemma \ref{lem:decomeg}, $V$ is a spherical representation, i.e. already a Borel subgroup of $L'$ acts with finitely many orbits; alternatively, see the classification \cite[Section 8]{saki}). Denote the orbits by $O'_1, \dots, O'_n$, with $\ol{O'_n}= V$. From the classification in \cite[Section 7]{saki}, we see that the prehomogeneous space $(L',V)$ is regular, i.e. we have $X \setminus O_n =  h^{-1}(0)$. Hence, $N$ has $2n$ orbits under the action of $L$, given by $(0,O'_i)$ and $(\C^*, O'_i)$, $i=1,\dots, n$, and the only orbit closures not contained in $D'$ are $\ol{(\C^*,O'_n)} = N$ and $\ol{(0,O'_n)}$. But the latter is not $P$-stable, otherwise the sequence (\ref{eq:pee}) would split, and thus contradict Lemma \ref{lem:nonsplit}.
\end{proof}

The inclusion $p_1^*(\L) \subset \O_Z$ of line bundles gives a sequence of inclusions $\O_Z \subset p_1^*(\L^{-1}) \subset p_1^*(\L^{-2}) \subset \dots \, $. Applying $p_{1*}$ we get inclusions $\Sym \eta \, \subset \, \L^{-1} \oo \Sym \eta \, \subset \, \L^{-2} \oo \Sym \eta \, \subset \dots $. The next result follows essentially in the same way as \cite[Proposition 5.4]{senary}.

\begin{prop}\label{prop:localopen}
For each $i\geq 2$, we have an isomorphism of $G$-modules
\[H_{\ol{O}_2}^i(S/(f)) \cong \varinjlim_k H^{i-1} (G/P, \, \L^{-k}\oo\Sym \eta).\]
\end{prop}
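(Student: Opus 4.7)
The plan is to compute $H^i_{\ol{O}_2}(S/(f))$ by transferring the question from the singular variety $\ol{O}_3$ to its desingularization $Z$, and then pushing down the result to $G/P$. Since the orbits of $X$ are totally ordered by closure inclusion, $\ol{O}_3\setminus \ol{O}_2 = O_3$, and because $\ol{O}_3 = \Spec S/(f)$ is affine, one has $H^j(\ol{O}_3,\O_{\ol{O}_3})=0$ for all $j\geq 1$ with $H^0(\ol{O}_3,\O_{\ol{O}_3}) = S/(f)$. The local cohomology long exact sequence attached to the closed pair $\ol{O}_2 \subset \ol{O}_3$ therefore collapses for $i\geq 2$ into a $G$-equivariant isomorphism
\[
H^i_{\ol{O}_2}(S/(f)) \;\cong\; H^{i-1}(O_3,\O_{O_3}).
\]

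Next I would move this computation to $Z$. By Proposition \ref{prop:divisor}, the restriction $p_2 \colon U \to O_3$ is a $G$-equivariant isomorphism, so $H^{i-1}(O_3,\O_{O_3}) \cong H^{i-1}(U,\O_U)$. The same proposition exhibits the complement $D = Z\setminus U$ as the zero scheme of a global section of $p_1^*\L$, which yields an ascending chain $\O_Z \subset p_1^*\L^{-1} \subset p_1^*\L^{-2} \subset \cdots$ with colimit $j_*\O_U$, where $j\colon U\hookrightarrow Z$ is the open inclusion. Since $Z$ is noetherian, cohomology commutes with this direct limit and gives
\[
H^{i-1}(U,\O_U) \;\cong\; \varinjlim_k H^{i-1}(Z,\, p_1^*\L^{-k}).
\]

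Finally I would push down along $p_1$. Because $p_1\colon Z = \Tot(\eta^*) \to G/P$ is an affine bundle, $R^j p_{1*}\O_Z = 0$ for $j\geq 1$ and $p_{1*}\O_Z = \Sym \eta$ (as recorded in Lemma \ref{lem:nonsplit}). The projection formula then gives $Rp_{1*}(p_1^*\L^{-k}) = \L^{-k}\oo \Sym\eta$, so the Leray spectral sequence degenerates to
\[
H^{i-1}(Z,\, p_1^*\L^{-k}) \;\cong\; H^{i-1}(G/P,\, \L^{-k}\oo \Sym\eta),
\]
$G$-equivariantly. Concatenating the three isomorphisms yields the desired formula.

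No serious obstacle is expected: the argument is the uniform extension of the one in \cite[Proposition 5.4]{senary}, and every tool used (the local cohomology triangle, the projection formula, and the commutation of direct limits with cohomology on a noetherian scheme) is standard. The only points that deserve a moment of care are the identification $p_2^{-1}(\ol{O}_2)=D$, which is exactly the content of Proposition \ref{prop:divisor}, and the $G$-equivariance of each isomorphism, which is automatic since every sheaf, morphism, and filtration appearing in the chain is $G$-equivariant by construction.
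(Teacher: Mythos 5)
Your proposal is correct and follows essentially the same route the paper takes (which is the approach of \cite[Proposition~5.4]{senary}, to which the paper defers): reduce to cohomology of $O_3$ via the local cohomology long exact sequence on the affine variety $\ol{O}_3$, identify $O_3$ with the open $U=Z\setminus D$ in the resolution, filter $j_*\O_U$ by the powers $p_1^*\L^{-k}$ furnished by Proposition~\ref{prop:divisor}, and push forward along the affine bundle $p_1$ using the projection formula. One small attribution slip: $U\cong O_3$ is stated just before Proposition~\ref{prop:divisor} (a consequence of $p_2$ being equivariant, proper, and birational), not in that proposition; and you implicitly use that $j:U\hookrightarrow Z$ is an affine morphism (true, since $D$ is Cartier) so that $H^{i-1}(U,\O_U)=H^{i-1}(Z,j_*\O_U)$ before commuting cohomology with the colimit — worth making explicit, but the argument is sound.
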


We estimate cohomology by working first with the associated graded of $\eta$:
\begin{equation}\label{eq:greta}
\gr \eta= (\Omega_{G/P}\oplus \O_{G/P})\oo \O(1).
\end{equation}

We have the following (multiplicity-free) decomposition in degree $d-3k$ by Lemma \ref{lem:decomeg}:
\begin{equation}\label{eq:decogr}
\L^{-k}\oo\Sym_d (\gr \eta) = \bigoplus_{\substack{a,b\geq 0, \,\, c\geq a+b \\ 3c-2b-a\leq d}} \V(a \cdot \lie'+b\cdot X_4+(d-2c-k)\cdot X).
\end{equation}

Now we analyze the limit maps in Proposition \ref{prop:localopen} above.

\begin{lemma}\label{lem:lim}
For any representation $\ll$ of $G$, the map 
\[H^i(G/P, \,\L^{-k}\oo \Sym \eta) \to H^i(G/P,\, \L^{-k-1} \oo \Sym \eta)\]
is an isomorphism on the level of $\ll$-isotypical components for $k \gg 0$.
\end{lemma}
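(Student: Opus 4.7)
The plan is to fit the system into a short exact sequence of sheaves and prove a cohomology vanishing of the cokernel using the Borel--Weil--Bott theorem. The transition maps are induced by multiplication by the global section $s \in H^0(G/P, \L^{-1} \otimes \Sym_3 \eta)$ corresponding to the inclusion $\L \hookrightarrow \Sym_3 \eta$; since $s$ shifts the $\Sym \eta$-grading $d$ by $+3$, the maps preserve the \emph{internal grading} $n := d - 3k$ on $\L^{-k} \otimes \Sym_d \eta$. Consequently the problem reduces to showing that, for each fixed $n$ and each $\ll \in \LL$, the map
\[ H^i(G/P, \L^{-k} \otimes \Sym_{n+3k} \eta)_\ll \lra H^i(G/P, \L^{-k-1} \otimes \Sym_{n+3k+3} \eta)_\ll \]
of finite-dimensional vector spaces is an isomorphism for $k$ sufficiently large.

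Next I would place the maps into the short exact sequence of sheaves
\[ 0 \to \L^{-k} \otimes \Sym \eta \xrightarrow{\,\cdot s\,} \L^{-k-1} \otimes \Sym \eta \to \L^{-k-1} \otimes B \to 0, \]
with $B = \Sym \eta / (\L \cdot \Sym \eta) = p_{1*}\O_D$, so that by the long exact sequence of cohomology it suffices to show that $H^i(G/P, \L^{-k-1} \otimes B)_\ll$ vanishes in internal degree $n$ for $k \gg 0$. Filter $\eta$ using (\ref{eq:eta}) and propagate the filtration to $\Sym\eta$ and $B$. Since $D$ is a proper divisor of $Z$ (Proposition \ref{prop:divisor}), $\L$ is a non-zero-divisor in $\Sym \eta$, hence $\gr B = \Sym(\gr \eta)/(\L \cdot \Sym(\gr \eta))$. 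Combining Lemma \ref{lem:decomeg} with the multiplicity-free decomposition (\ref{eq:decogr}), a direct calculation shows that every irreducible weight $\mu = a \lie' + b X_4 + eX$ with $a,b \geq 0$, $e \geq -b$ appears in $\gr B$ in exactly one degree, namely $d = e + 2(a+b)$; otherwise it does not appear.

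Consequently, in $\L^{-k-1} \otimes \gr B$ restricted to internal degree $n$, each contributing summand $\V(a\lie' + bX_4 + e'X)$ (with $e' = e - k - 1$) satisfies $n = e' - 2k + 2(a+b) - 2$, so $k$ is uniquely determined by $(n, a, b, e')$. Applying Theorem \ref{thm:bott} term-by-term and noting that, for each Weyl element $w \in W$ of length $i$, the equation $w^{\,\cdot}(a\lie' + bX_4 + e'X) = \ll$ determines $(a,b,e')$ uniquely, we obtain a finite set of values of $k$ (one per admissible $w$) beyond which the $\ll$-isotypical component of $H^i(G/P, \L^{-k-1} \otimes \gr B)$ vanishes in internal degree $n$. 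Since $H^i(G/P, \L^{-k-1} \otimes B)_\ll$ is a subquotient of this via the filtration spectral sequence, the required vanishing follows. The main obstacle I anticipate is the combinatorial verification that each irreducible weight occurs in $\gr B$ in at most one graded degree, which rests on identifying $\L \cong \V(X)$ as the unique copy of that weight inside $\Sym_3 \Omega_{G/P}(3) \subset \Sym_3 (\gr \eta)$ and on the injectivity of multiplication by $\L$ on $\Sym(\gr \eta)$; all the remaining steps are formal consequences of the long exact sequence and Borel--Weil--Bott.
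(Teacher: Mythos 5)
Your argument takes essentially the same route as the paper's: fit the transition maps into the short exact sequence $0\to\L^{-k}\otimes\Sym\eta\to\L^{-k-1}\otimes\Sym\eta\to\mc{B}\to 0$, pass to the associated graded of $\eta$, use the multiplicity-free decomposition (\ref{eq:decogr}) together with the Borel--Weil--Bott finiteness to kill the $\ll$-isotypical component of the cohomology of the cokernel for $k\gg 0$, and conclude by the long exact sequence. The paper states the key step more abstractly: since only finitely many irreducible $G$-bundles yield $\ll$ and the multiplicity of each stabilizes in $\L^{-k}\otimes\Sym(\gr\eta)$ at a fixed internal degree, the subquotient multiplicities in $\gr\mc{B}$ all vanish for $k\gg 0$ --- this is logically equivalent to, but slightly more robust than, your explicit identification of $\gr B$, because it compares multiplicities and never needs the precise structure of the quotient. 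On that point, note one inferential slip in your write-up: knowing that $\L$ is a non-zero-divisor in $\Sym\eta$ does not, by itself, yield $\gr B=\Sym(\gr\eta)/(\L\cdot\Sym(\gr\eta))$; the statement you actually need is that $\L$ is a non-zero-divisor in $\Sym(\gr\eta)$. This is also true (it is a rank-one subbundle of $\Sym_3(\gr\eta)$ inside a locally free sheaf of integral domains), so your argument goes through, but the paper's multiplicity-counting formulation sidesteps the issue entirely.
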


\begin{proof}
By the Borel--Weil--Bott theorem \ref{thm:bott}, there exists only finitely (isomorphism classes) of irreducible $G$-bundles on $G/P$ that yield the representation $\ll$ in some cohomological degree. The multiplicity for each of these bundles stabilizes in $\L^{-k}\oo\Sym (\gr \eta)$ for $k\gg 0$, since by (\ref{eq:decogr})  $\L^{-k}\oo\Sym_d (\gr \eta)$ has a multiplicity-free decomposition into $G$-bundles (in fact, admissibility suffices). Write
\begin{equation}\label{eq:bundlim}
0 \to \L^{-k}\oo\Sym \eta \to \L^{-k-1}\oo\Sym\eta \to \mc{B}\to 0.
\end{equation}
Consider the associated graded of these bundles. The argument above shows that for $k\gg 0$ the bundle $\gr \mc{B}$ contains no irreducible summands that yield $\ll$ in some cohomological degree. This implies that the $\ll$-isotypical component of $H^i(G/P,\, \mc{B})$ is zero, for all $i\geq 0$. The conclusion follows now by taking the long exact sequence in cohomology associated to (\ref{eq:bundlim}).
\end{proof}

The following exact sequence of $G$-equivariant $S$-modules
\[0\to S(-4) \xrightarrow{\;\;f\;\;} S \lra S/(f) \to 0\]
gives a long exact sequence in local cohomology modules
\begin{equation}\label{eq:long}
\dots \to H^{i-1}_{\ol{O}_2}(S/(f)) \to H^i_{\ol{O}_2}(S)(-4) \xrightarrow{\;f\;} H^i_{\ol{O}_2}(S) \to H^i_{\ol{O}_2}(S/(f)) \to H^{i+1}_{\ol{O}_2}(S)(-4) \to \dots
\end{equation}

To finish part (2) of Theorem \ref{thm:locring}, we pursue a reasoning similar to the one in \cite[Section 5]{senary}.

\subsection{Local cohomology supported in $\ol{O}_2$ for $(C_3,\omega_3)$} \label{sec:C3O2}

Throughout this section $(G',X)=(C_3,\omega_3)$.

\begin{lemma}\label{lem:grC3}
For all $k\gg 0$, the space of $G'$-invariants
\[H^{i} (G/P, \L^{-k}\oo\Sym (\gr \eta))^{G'}\] 
is nonzero if and only if $i=0,1,5,6$. Among these for $i=5,6$, the invariant spaces of $\bb{C}^*$-degree $-6$ are zero, and the space of $\bb{C}^*$-degree $-10$ is one-dimensional for $i=5$ and zero for $i=6$.
\end{lemma}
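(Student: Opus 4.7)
The plan is to decompose $\L^{-k}\oo \Sym(\gr\eta)$ into irreducible homogeneous bundles via (\ref{eq:greta}) and (\ref{eq:decogr}), then apply the Borel--Weil--Bott theorem to each summand. By (\ref{eq:decogr}), the bundle splits as a multiplicity-free sum of $\V(\lambda)$ with $\lambda = 2a\omega_1 + 2b\omega_2 + (d-2c-k)\omega_3$, subject to $a,b\geq 0$, $c\geq a+b$, $3c-2b-a\leq d$; the $\C^*$-degree of such a summand is $d-3k$. By Theorem \ref{thm:bott}, $\V(\lambda)$ contributes a copy of the trivial $G'$-representation in $H^i$ precisely when $\lambda+\rho$ lies in the $W(C_3)$-orbit of $\rho$, with $i=l(w)$ for the unique $w\in W(C_3)$ with $w(\lambda+\rho)=\rho$.

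Expressing $\lambda+\rho$ in the standard $e$-basis as $(p,q,r) = (2a+2b+(d-2c-k)+3,\,2b+(d-2c-k)+2,\,d-2c-k+1)$, I observe that $p-q = 2a+1$ and $q-r = 2b+1$ are positive odd integers. Since the orbit of $\rho=(3,2,1)$ under $W(C_3)$ is all signed permutations of $(1,2,3)$, I would enumerate all triples $(p,q,r)$ with $p>q>r$, $\{|p|,|q|,|r|\}=\{1,2,3\}$, and both successive differences odd. A quick sign-and-sort check leaves exactly four candidates:
\[(3,2,1),\quad (3,2,-1),\quad (1,-2,-3),\quad (-1,-2,-3),\]
arising respectively from $(a,b)=(0,0),(0,1),(1,0),(0,0)$. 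Running the reflection recipe of Section \ref{sec:bott} to transport each back to $\rho$ yields Weyl elements of lengths $0,1,5,6$; this both establishes the ``only if'' direction and identifies the four relevant cohomological degrees.

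For the finer claims, I would analyze the admissible $(a,b,c,d)$ case by case and track the $\C^*$-degree $d-3k$. For $i=5$, the equation $d-2c-k=-4$ combined with $(a,b)=(1,0)$, $c\geq 1$, and $3c-1\leq d$ forces $d=2c+k-4$ with $1\leq c\leq k-3$; the resulting $\C^*$-degrees $2c-2k-4$ fill out the interval $\{-2k-2,\,-2k,\,\dots,\,-10\}$, so degree $-6$ is missed while degree $-10$ is attained uniquely at $c=k-3$, contributing a one-dimensional space of invariants. For $i=6$, the constraints force $(a,b)=(0,0)$, $0\leq c\leq k-4$, $d=2c+k-4$, and the degrees run over $\{-2k-4,\dots,-12\}$, so both $-6$ and $-10$ are missed. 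Cases $i=0,1$ are solvable for all $k\geq 1$, which verifies the biconditional for $k\gg 0$.

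The main technical obstacle is executing the Borel--Weil--Bott reflection algorithm correctly for the cases $(1,-2,-3)$ and $(-1,-2,-3)$ and confirming that the minimal-length elements sending them to $\rho$ have lengths $5$ and $6$ respectively; once this is in hand, the rest is bookkeeping of the linear constraints appearing in (\ref{eq:decogr}).
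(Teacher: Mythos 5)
Your proof is correct and follows the same strategy as the paper: decompose via (\ref{eq:decogr}), apply the Borel--Weil--Bott theorem to each irreducible summand, use a parity argument to isolate the cohomological degrees that can carry the trivial $G'$-representation, and then track the $\C^*$-degrees. The only difference is organizational: the paper runs the reflection recipe step-by-step in $\omega$-coordinates (recording the intermediate weights in (\ref{eq:parity}) and reading off parity obstructions at each level), whereas you convert $\lambda+\rho$ to $e$-coordinates once, observe $p-q=2a+1$ and $q-r=2b+1$ are odd, and enumerate the $W(C_3)$-orbit of $\rho=(3,2,1)$ directly, then compute lengths by counting inverted positive roots. Both land on the same four summands $\V(0,0,0),\V(0,2,-2),\V(2,0,-4),\V(0,0,-4)$ at $i=0,1,5,6$, and your degree bookkeeping (degrees $\{-2k-2,\dots,-10\}$ for $i=5$, $\{-2k-4,\dots,-12\}$ for $i=6$) matches the paper's conclusion exactly.
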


\begin{proof}
We start with the decomposition (\ref{eq:decogr}).
We apply Theorem \ref{thm:bott} to a summand of type $\V(2a,2b,-x)$ with $a,b,x\geq 0$, to see when it gives the trivial $G'$-representation $(0,0,0)$. Clearly, $(2a+1,2b+1,-x+1)$ is $(1,1,1)$ for $a=b=x=0$, giving the trivial representation for the $i=0$ cohomology. Otherwise, in order to continue we must have $x>1$ and reflecting at the third node gives $\V(2a+1,2b-2x+3, x-1)$. This gives $(1,1,1)$ for $a=0,b=1,x=2$. Otherwise, we must have $2x-2b+3<0$ and we proceed likewise to $i=2$. At the next step $i=3$, we can potentially reflect at either the first or third node. All in all, we can encounter the following weights for $i=2,3,3,4$ respectively:
\begin{equation}\label{eq:parity}
\begin{split}
(2a+2b+4-2x,2x-2b-3,2b+2-x), \quad (2x-2a-2b-4,2a+1,2b+2-x) , \\
 (2a+2b+4-2x,2b+1,x-2b-2), \quad (2x-2a-2b-4,2a+4b+5-2x,x-2b-2).
\end{split}
\end{equation}

Due to parity reasons, we can not get the trivial representation for $i=2,3,4$. After $i=5$ steps, we arrive at the weight (with $x\geq a+2b+3$):
\[(2b+1,2x-2a-4b-5, 2a+2b+3-x).\]
This gives $(1,1,1)$ for $a=1,b=0,x=4$. Otherwise, we must have $x>2a+2b+3$ and reflecting at the third node yields $(2b+1,2a+1,x-2a-2b-3)$. This gives $(1,1,1)$ for $a=0,b=0,x=4$ at $i=6$, and we stop since all entries of the weight vector are non-negative.

In other words, the only possible summands giving the trivial $G'$-representation $(0,0,0)$ are
\[\V(0,0,0), \, \V(0,2,-2), \, \V(2,0,-4), \, \V(0,0,-4),\]
when $i=0,1,5,6$, respectively. Now when the degree is $d-3k=-6$, we see by inspection that there are no summands $\V(2,0,-4)$ or $\V(0,0,-4)$ in (\ref{eq:decogr}). If $d-3k=-10$ then $\V(0,0,-4)$ is not a summand, but $\V(2,0,-4)$ is a summand in (\ref{eq:decogr}) with $a=1,b=0,c=k-3$, for $k\geq 4$.
\end{proof}

\begin{lemma}\label{lem:H4O2}
Each $G'$-module in the decomposition of $H^4_{\ol{O}_2}(S)$ is of the form $(2p+1)\omega_1 + 2q \omega_2 + r \omega_3$, for some $p,q,r \in \bb{Z}_{\geq 0}$. In particular, $H^4_{\ol{O}_2}(S)$ has no non-zero $G'$-invariant sections.
\end{lemma}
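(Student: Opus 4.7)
The plan is to combine the long exact sequence (\ref{eq:long}), Proposition \ref{prop:localopen}, and the Borel--Weil--Bott bookkeeping already set up in the proof of Lemma \ref{lem:grC3} to control the $G'$-isotypical structure of $M := H^4_{\ol{O}_2}(S)$.

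First I would exploit that $M$ is supported in $\ol{O}_2 \subset V(f)$, so $f$ acts locally nilpotently on $M$. Decomposing $M$ into $G'$-isotypical components $M_\lambda$, each $M_\lambda$ is stable under multiplication by $f$ (which is $G'$-invariant), and $f$ is locally nilpotent on $M_\lambda$. Hence the kernel of $f$ acting on $M_\lambda$ is nonzero whenever $M_\lambda \neq 0$. Reading (\ref{eq:long}) on the $\lambda$-isotypical component, this kernel is a quotient of $H^3_{\ol{O}_2}(S/(f))_\lambda$, so every $G'$-irreducible appearing in $M$ must already appear in $H^3_{\ol{O}_2}(S/(f))$.

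Next I would use Proposition \ref{prop:localopen} to identify $H^3_{\ol{O}_2}(S/(f)) \cong \varinjlim_k H^2(G/P,\, \L^{-k} \otimes \Sym \eta)$, stabilizing each isotypical component for $k \gg 0$ by Lemma \ref{lem:lim}. The sequence (\ref{eq:eta}) gives a $G$-equivariant filtration on $\Sym \eta$ whose associated graded is $\Sym(\gr \eta)$, and the associated spectral sequence implies that the irreducible $G'$-summands of $H^2(G/P,\, \L^{-k} \otimes \Sym \eta)$ are subquotients of those of $H^2(G/P,\, \L^{-k} \otimes \Sym(\gr \eta))$.

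Finally, I would feed the multiplicity-free decomposition (\ref{eq:decogr}) of $\L^{-k} \otimes \Sym(\gr \eta)$ into the Bott algorithm. This is the same computation already carried out in the proof of Lemma \ref{lem:grC3}: a summand $\V(2a, 2b, -x)$ contributes to $H^2$ precisely when $(2a+2b+4-2x,\, 2x-2b-3,\, 2b+2-x)$ is strictly dominant, and then subtracting $\rho$ yields the $G'$-highest weight
\[(2(a+b-x)+3,\, 2(x-b-2),\, 2b+1-x),\]
whose first coordinate is odd and whose second coordinate is even. This matches the claimed form $(2p+1)\omega_1 + 2q \omega_2 + r \omega_3$, and since the trivial representation has first label $0$, it cannot appear. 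I expect the main technical point to be setting up the filtration/spectral-sequence argument cleanly enough to legitimately reduce the $G$-isotypical analysis of $H^2(G/P,\, \L^{-k} \otimes \Sym \eta)$ to the graded version; the rest is parity bookkeeping of exactly the kind already performed in Lemma \ref{lem:grC3}.
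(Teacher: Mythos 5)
Your proposal is correct and follows essentially the same route as the paper: reduce via the long exact sequence (\ref{eq:long}) and local nilpotence of $f$ to $H^3_{\ol{O}_2}(S/(f))$, pass through Proposition \ref{prop:localopen} and the filtration of $\Sym\eta$ to the associated-graded bundle, and then run the Bott algorithm in $H^2$ to read off the parity constraints $(2p+1)\omega_1 + 2q\omega_2 + r\omega_3$. The paper's proof is a terser version of this argument (it invokes the computation (\ref{eq:parity}) with $i=2$ directly from the proof of Lemma \ref{lem:grC3} rather than re-deriving the spectral-sequence reduction), and notes the slightly stronger fact that $H^3_{\ol{O}_2}(S/(f)) \hookrightarrow H^4_{\ol{O}_2}(S)(-4)$ is injective because $\codim \ol{O}_2 = 4$, but your "quotient" observation suffices for the one-sided inclusion of isotypical components that is needed.
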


\begin{proof}
By Proposition \ref{prop:localopen}, any $G$-representation in $H^3_{\ol{O}_2}(S/(f))$ must appear in $H^{2} (G/P, \L^{-k}\oo\Sym (\gr \eta))$ for $k\gg 0$. We see from (\ref{eq:parity}) with $i=2$ that the only possible $G'$-representations in $H^{2} (G/P, \L^{-k}\oo\Sym (\gr \eta))$ are of the form $(2p+1)\omega_1 + 2q \omega_2 + r \omega_3$, for some $p,q,r \in \bb{Z}_{\geq 0}$.
The long exact sequence (\ref{eq:long}) gives an exact sequence
\[0 \to H^3_{\ol{O}_2}(S/(f)) \to H^4_{\ol{O}_2}(S)(-4) \xrightarrow{\;f\;} H^4_{\ol{O}_2}(S).\]
Since any element of $H^4_{\ol{O}_2}(S)$ is annihilated by a power of the $G'$-invariant $f$, this proves the claim.
\end{proof}

We now finish the proof of Theorem \ref{thm:locring}(b)(2).

\begin{prop}\label{prop:C3O2}
We have $H^i_{\ol{O}_2}(S) = 0$ for $i>4, i\neq 7$ and $H^7_{\ol{O}_2}(S)=E$.
\end{prop}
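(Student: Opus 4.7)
The strategy is to combine the long exact sequence (\ref{eq:long}), Proposition \ref{prop:localopen}, and Lemma \ref{lem:grC3}, treating the cases $i \geq 8$, $i \in \{5,6\}$, and $i=7$ separately. For $i \geq 8$: since $G/P$ is six-dimensional, every coherent sheaf on it has vanishing cohomology in degrees $>6$, so $H^i_{\ol{O}_2}(S/(f)) = 0$ for $i \geq 8$ by Proposition \ref{prop:localopen}. The sequence (\ref{eq:long}) then forces $f$ to be bijective on $H^i_{\ol{O}_2}(S)$, while $H^i_{\ol{O}_2}(S)$ for $i \geq 5$ is supported in $\ol{O}_1 \subset V(f)$ (as $\ol{O}_2 \setminus \ol{O}_1 = O_2$ is smooth of codim $m+3 = 4$), making $f$ locally nilpotent; bijectivity plus local nilpotence yields the vanishing.

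For $i = 5, 6$, the same support argument shows the composition factors of $H^i_{\ol{O}_2}(S)$ lie in $\{L_1, E\}$, each with nonzero $G'$-invariants by Theorem \ref{thm:simples}; by exactness of $(\cdot)^{G'}$ it suffices to show $H^i_{\ol{O}_2}(S)^{G'} = 0$. By Lemma \ref{lem:grC3} with Lemma \ref{lem:lim}, $H^j_{\ol{O}_2}(S/(f))^{G'} = 0$ for $j \in \{3, 4, 5\}$, so applying $(\cdot)^{G'}$ to (\ref{eq:long}) makes $f$ bijective on $H^5_{\ol{O}_2}(S)^{G'}$ and injective on $H^6_{\ol{O}_2}(S)^{G'}$. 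Combined with local nilpotence, which demands a nonzero $f$-socle in any nonzero admissible module, this forces $H^5_{\ol{O}_2}(S)^{G'} = H^6_{\ol{O}_2}(S)^{G'} = 0$ and hence vanishing of the full modules.

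The main obstacle is $i=7$. From the previous vanishings, the $G'$-invariant LES becomes
\[
0 \to H^6_{\ol{O}_2}(S/(f))^{G'} \to H^7_{\ol{O}_2}(S)^{G'}(-4) \xrightarrow{f} H^7_{\ol{O}_2}(S)^{G'} \to H^7_{\ol{O}_2}(S/(f))^{G'} \to 0.
\]
Extending the analysis in Lemma \ref{lem:grC3}, the summands $\V(2,0,-4)$ and $\V(0,0,-4)$ contribute one-dimensionally at every even $\bb{C}^*$-degree $\leq -10$ and $\leq -12$ respectively, giving $[H^6_{\ol{O}_2}(S/(f))^{G'}] = \sum_{k \geq 5} t^{-2k}$ and $[H^7_{\ol{O}_2}(S/(f))^{G'}] = \sum_{k \geq 6} t^{-2k}$, so $(1-t^4)[H^7_{\ol{O}_2}(S)^{G'}] = -t^{-10}$ by alternating sum. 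Since $S^{G'} = \C[f]$ (classical invariant theory for $(\Sp_6, \omega_3)$), (\ref{eq:fourier}) gives $[E^{G'}] = t^{-14}/(1-t^{-4})$; from $L_3 \cong \D f^{-1}/S$ (Theorem \ref{thm:simples}) and the cyclic-module identity $(\D f^{-1})^{G'} = \D^{G'}\cdot f^{-1} = \C[f]\cdot f^{-1}$, only $[f^{-1}]$ survives modulo $\C[f]$, yielding $[L_3^{G'}] = t^{-4}$ and hence $[L_1^{G'}] = t^{-10}$ via Fourier transform $L_1 \cong \F(L_3)$. Writing $[H^7_{\ol{O}_2}(S)^{G'}] = a[L_1^{G'}] + b[E^{G'}]$ with $a, b \in \Z_{\geq 0}$, one computes $(1-t^4)(a t^{-10} + b[E^{G'}]) = (a-b)t^{-10} - a t^{-6}$; matching with $-t^{-10}$, the absence of a $t^{-6}$ term forces $a=0$, then $b=1$. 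Since $\Ext^1(E, E) = 0$ in $\opmod_G(\D_X)$ (the $2$-cycle at $E$ in Theorem \ref{thm:quiv}(b) vanishes), we conclude $H^7_{\ol{O}_2}(S) \cong E$.
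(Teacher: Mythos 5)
The overall plan — work with $G'$-invariants in the long exact sequence (\ref{eq:long}) together with Proposition \ref{prop:localopen} and Lemma \ref{lem:grC3} — is the same as the paper's, but there is a genuine gap in your ``$i\geq 8$'' step that propagates to the $i=7$ computation.

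You claim that, since $H^i_{\ol{O}_2}(S/(f))=0$ for $i\geq 8$ (true, as $\dim G/P = 6$), the sequence (\ref{eq:long}) makes $f$ \emph{bijective} on $H^i_{\ol{O}_2}(S)$ for $i\geq 8$. For $i\geq 9$ this is right, but for $i=8$ the relevant part of (\ref{eq:long}) is
\[
H^7_{\ol{O}_2}(S/(f)) \to H^8_{\ol{O}_2}(S)(-4)\xrightarrow{\,f\,} H^8_{\ol{O}_2}(S)\to 0,
\]
and you only get that $f$ is \emph{surjective} — indeed you yourself assert later that $[H^7_{\ol{O}_2}(S/(f))^{G'}] = \sum_{k\geq 6}t^{-2k}\neq 0$, so one cannot claim the incoming term vanishes. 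Surjectivity of $f$ together with local $f$-nilpotence does \emph{not} force a module to vanish: the module $E$ itself is $f$-torsion (supported at the origin) with $f$ acting surjectively, yet $E\neq 0$. So the stated mechanism fails at exactly the case $i=8$. This matters because your $i=7$ analysis uses the four-term exact sequence
\[
0\to H^6_{\ol{O}_2}(S/(f))^{G'}\to H^7_{\ol{O}_2}(S)^{G'}(-4)\xrightarrow{f} H^7_{\ol{O}_2}(S)^{G'}\to H^7_{\ol{O}_2}(S/(f))^{G'}\to 0,
\]
whose right-hand $0$ requires $H^8_{\ol{O}_2}(S)^{G'}=0$. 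Without it, the alternating-sum identity becomes $(1-t^4)\bigl([H^7_{\ol{O}_2}(S)^{G'}]-[H^8_{\ol{O}_2}(S)^{G'}]\bigr)=-t^{-10}$, which no longer pins down $H^7_{\ol{O}_2}(S)$.

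The fix is essentially the degree/socle argument the paper uses uniformly for all $i>4$: any nonzero $H^i_{\ol{O}_2}(S)$ ($i>4$) is a direct sum of $L_1,E$ and the two non-split extensions between them, and each of these has a $G'$-invariant in the $f$-socle sitting in degree $-10$ or $-14$; via (\ref{eq:long}) this would produce a $G'$-invariant of degree $-6$ or $-10$ in $H^{i-1}_{\ol{O}_2}(S/(f))$, which Lemma \ref{lem:grC3} (through Proposition \ref{prop:localopen} and Lemma \ref{lem:lim}) rules out for $i-1=7$ (i.e.\ $i=8$), as well as for $i-1=4,5$ and $i-1\geq 8$. This handles $i=8$ cleanly and then your $i=7$ Euler-characteristic computation goes through. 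Two smaller remarks: the identity $(\D f^{-1})^{G'}=\C[f]f^{-1}$ that you invoke for $[L_3^{G'}]=t^{-4}$ deserves a word of justification (the paper supplies this in the proof of Theorem \ref{thm:simples}: $f^{-1}$ is the only $G'$-invariant section of $\D f^{-1}/S$ up to scaling); and the appeal to $\Ext^1(E,E)=0$ at the end is unnecessary once you know $a=0,\,b=1$, since that already forces $H^7_{\ol{O}_2}(S)$ to be the single simple $E$.
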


\begin{proof}
The modules $H^i_{\ol{O}_2}(S) = 0$ for $i>4$ have support contained in $\ol{O}_1$. Hence, by the description of the category in Theorem \ref{thm:quiv} (b), they must be direct sums of modules of type $L_1,E, M, N$, where $M=\D f^{-7/2}/\D f^{-3/2}$ and $N=\bb{D} N$ is the holonomic dual to $N$. There is a nonsplit exact sequence
\[0\to E \to N \to L_1\to 0.\]
The modules $L_1, M$ (resp. $E,N$) have $G$-semi-invariant elements of degree $-10$ (resp. $-14$) that are annihilated by $f$ (see Theorem \ref{thm:simples}). Since $E$ does not have a semi-invariant of degree $-6$, we see that $N$ also has a semi-invariant element of degree $-10$ annihilated by $f$. In particular, any non-trivial local cohomology module $H^i_{\ol{O}_2}(S)$ (for $i>4$) has a non-zero $G'$-invariant element in degree $-10$ or $-14$ annihilated by $f$.


Thus, by taking $G'$-invariants in degrees $-6$ and $-10$ in the long exact sequence (\ref{eq:long}), we see by Proposition \ref{prop:localopen}, Lemma \ref{lem:lim} and Lemma \ref{lem:grC3} that the only non-zero local cohomology (when $i>4$) is $H^7_{\ol{O}_2}(S)=E$. 
%
\end{proof}

We now proceed determining the $G$-character of the simple equivariant $\D$-module $L_2$.

\begin{theorem}\label{thm:D2}
When $(G',X)=(C_3,\omega_3)$, the $G$-character of $L_2$ is given by
\[[L_2]=\frac{t^{-7} \omega_1}{(1-t^{-2}\lie')(1-t^{-1}X)(1-X_4)(1-tX)(1-t^2\lie')}.\]
\end{theorem}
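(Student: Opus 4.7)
The plan is to extract $[L_2]$ from the long exact sequence \eqref{eq:long} associated to $0 \to S(-4) \xrightarrow{f} S \to S/(f) \to 0$, using the identification $L_2 \cong H^4_{\ol{O}_2}(S)$ from Theorem \ref{thm:locring}(b)(2). Since $\codim \ol O_2 = m+3 = 4$, we have $H^3_{\ol{O}_2}(S)=0$, and Proposition \ref{prop:C3O2} gives $H^5_{\ol{O}_2}(S)=0$. Thus \eqref{eq:long} collapses to
\[
0 \to H^3_{\ol{O}_2}(S/(f)) \to L_2(-4) \xrightarrow{f} L_2 \to H^4_{\ol{O}_2}(S/(f)) \to 0,
\]
whence taking alternating sums of $G$-characters yields
\[
(t^4 - 1)\cdot [L_2] \;=\; [H^3_{\ol{O}_2}(S/(f))] - [H^4_{\ol{O}_2}(S/(f))].
\]

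To compute the right-hand side, I invoke Proposition \ref{prop:localopen} and Lemma \ref{lem:lim}, which reduce the task to computing $H^2(G/P, \L^{-k}\oo\Sym\eta)$ and $H^3(G/P, \L^{-k}\oo\Sym\eta)$ for $k \gg 0$, where $G/P$ is the Lagrangian Grassmannian $LG(3,6)$. Since the character computation is insensitive to the non-splitting of \eqref{eq:eta}, I pass to the associated graded $\gr\eta = (\Omega_{G/P}\oplus \O_{G/P})\oo\O(1)$ and use the multiplicity-free decomposition \eqref{eq:decogr}. Borel--Weil--Bott (Theorem \ref{thm:bott}) is then applied summand-by-summand to each bundle $\V(2a\omega_1 + 2b\omega_2 + (d-2c-k)\omega_3)$. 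For the $H^2$ contribution, the reflection chain worked out in \eqref{eq:parity} identifies the relevant dominant weights as $(2a+2b+3-2y,\,2y-2b-4,\,2b+1-y)$ with $y := 2c+k-d$ subject to explicit dominance inequalities; this also recovers the odd-$\omega_1$ constraint of Lemma \ref{lem:H4O2}. A parallel analysis for $H^3$, which requires tracking an additional reflection (either at the second or first node), yields analogous lists of dominant weights and their characters. Summing these multiplicity-free contributions with the help of summation identities in the spirit of \cite[Lemma 4.2]{senary} (and taking the stable $k \to \infty$ limit justified by Lemma \ref{lem:lim}) produces closed-form rational expressions for both $[H^3_{\ol{O}_2}(S/(f))]$ and $[H^4_{\ol{O}_2}(S/(f))]$. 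Dividing the difference by $(t^4-1)$ is then expected to yield the advertised formula.

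The main obstacle is the bookkeeping for $H^3$: the shifted weight $(2a+1,2b+1,d-2c-k+1)$ can reach dominance along two distinct reflection chains, so one must combine the resulting contributions carefully, and also verify that the singular-weight (vanishing) locus is correctly accounted for. A useful sanity check on the final answer is self-duality under $t \mapsto t^{-1}$ (combined with dualization of $G'$-representations, which act trivially in type $C_3$), reflecting the isomorphism $\F(L_2)\cong L_2$ established in Theorem \ref{thm:simples}(b); the proposed closed form manifestly satisfies $[L_2]^\ast \cdot t^{-14} = [L_2]$, providing a consistency check that the BWB analysis and the algebraic division by $t^4-1$ have been carried out correctly.
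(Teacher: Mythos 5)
Your overall strategy --- passing to the long exact sequence \eqref{eq:long}, invoking Proposition~\ref{prop:localopen} and Lemma~\ref{lem:lim}, and applying Borel--Weil--Bott to the associated graded bundle --- is the same as the paper's, but there are two genuine gaps.

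First, the claim that ``the character computation is insensitive to the non-splitting of \eqref{eq:eta}'' is wrong at the level of individual cohomology degrees. Writing $N_i := H^i(G/P,\L^{-k}\oo\Sym(\gr\eta))$ and $M_i := H^i(G/P,\L^{-k}\oo\Sym\eta)$, only the Euler characteristic $\sum_i(-1)^i N_i = \sum_i(-1)^i M_i$ is preserved; the spectral sequence from $\gr\eta$ to $\eta$ does \emph{not} degenerate on the relevant $\ll$-isotypical components (with $\ll=(2p+1)\omega_1+2q\omega_2+r\omega_3$). Indeed, $N_4\neq 0$ for such $\ll$ (it contributes precisely when $d\leq -2p-r-9$ by \eqref{eq:degrees}), whereas $M_4=0$ because $H^5_{\ol{O}_2}(S/(f))=0$ follows from Proposition~\ref{prop:C3O2} via \eqref{eq:long}; so the differentials $N_3\to N_4$ are nontrivial. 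The correct statement is
\[
[H^3_{\ol{O}_2}(S/(f))] - [H^4_{\ol{O}_2}(S/(f))] \;=\; M_2 - M_3 \;=\; N_2 - N_3 + N_4,
\]
and your plan, which computes only $N_2$ and $N_3$, drops the $N_4$ term. This omission is fatal: with only $N_2-N_3$, the recursion forces $m_\ll^{d-4}(L_2)-m_\ll^{d}(L_2)=-1$ for all $d\leq -2p-r-9$, which cannot hold for an admissible module with nonnegative multiplicities. You also need to check $N_0 = N_1 = 0$ on these isotypical components (by parity) so there are no incoming differentials to $N_2$.

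Second, ``dividing the difference by $(t^4-1)$'' is not automatic. Since $L_2\cong H^4_{\ol{O}_2}(S)$ is not globally bounded in $t$-degree, the relation $(t^4-1)[L_2]=\text{(RHS)}$ does \emph{not} determine $[L_2]$ uniquely as a formal series; one needs an initial condition. The paper supplies exactly the vanishing $m^d_\ll(L_2)=0$ for $d>2p+r-7$, by a support argument: if $v\in L_2$ were a nonzero highest weight vector of weight $\ll$ in a degree exceeding the bound, then $w=f^{l-1}v$ (with $l\geq 1$ minimal so that $f^lv=0$) would be a nonzero element annihilated by $f$, hence by \eqref{eq:D2exact} would produce a class in $H^3_{\ol{O}_2}(S/(f))$ in a degree forbidden by \eqref{eq:degrees}. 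Your Fourier self-duality sanity check confirms the answer once in hand, but it does not substitute for the initial condition; in fact, the paper then verifies the proposed rational expression by showing that it satisfies \emph{both} the recursion~\eqref{eq:recform} and this initial condition, rather than by literally dividing closed-form characters.
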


\begin{proof}
By Proposition \ref{prop:C3O2}, the long exact sequence (\ref{eq:long}) gives the exact sequence
\begin{equation}\label{eq:D2exact}
0\to H^3_{\ol{O}_2}(S/(f)) \to L_2(-4) \xrightarrow{\;f\;} L_2 \to H^4_{\ol{O}_2}(S/(f)) \to 0.
\end{equation}
For any representation $\ll$ of $G'$, this gives the formula (we use the notation as in Section \ref{sec:rep})
\begin{equation}\label{eq:recursive}
m_\ll^{d-4}(L_2) - m_\ll^d(L_2) =m^d_\ll(H^3(S/(f))) - m^d_\ll(H^4(S/(f))), \, \mbox{ for any } d\in \bb{Z}.
\end{equation}
By Lemma \ref{lem:H4O2}, we can assume that $\ll=(2p+1)\omega_1 + 2q \omega_2 + r \omega_3$, for some $p,q,r \in \bb{Z}_{\geq 0}$. We now proceed to compute the right-hand side of (\ref{eq:recursive}). 

For simplicity, put $N_i= H^{i} (G/P, \L^{-k}\oo\Sym (\gr \eta))$ ($k\gg 0$) and consider again the decomposition (\ref{eq:decogr}). Due to parity reasons, the only cohomology groups that yield a representation of the form $\ll$ via the Borel--Weil--Bott theorem occur in steps $i=2,3,4$ for weights of the form (\ref{eq:parity}). More precisely, the calculation shows that the representation $\ll=(2p+1)\omega_1 + 2q \omega_2 + r \omega_3$ occurs in $N_i$ in $\C^*$-degree $d \in \bb{Z}$ if and only if $d+r$ is odd and we are in the following situation:
\begin{equation}\label{eq:degrees}
\begin{cases}
d\leq 2p+r-3, & \mbox{ when } i=2;\\
d\leq 2p-r-5 \mbox{ or } d\leq -2p+r-7, & \mbox{ when } i=3;\\
d\leq -2p-r-9, & \mbox{ when } i=4. 
\end{cases}
\end{equation}
Moreover, $m^d_\ll(N_i)$ is equal to the number of the corresponding inequalities satisfied above. Since $H_{\ol{O}_2}^5(S/(f))=0$, by Proposition \ref{prop:localopen} and Lemma \ref{lem:lim} the representations in $N_4$ must cancel out completely via representations in $N_3$ in the spectral sequence corresponding to (\ref{eq:greta}). Hence, we have
\[m^d_\ll(H^3(S/(f))) - m^d_\ll(H^4(S/(f)))=m^d_\ll(N_2) - m^d_\ll(N_3)+m^d_\ll(N_4).\]
Together with (\ref{eq:recursive}) and (\ref{eq:degrees}), this yields the recursive formula in $d$ (when $d+r$ is odd):
\begin{equation} \label{eq:recform}
m_\ll^{d-4}(L_2) - m_\ll^d(L_2)= \begin{cases}
0 & \mbox{ when }  d>2p+r-3;\\
1 & \mbox{ when }  d\leq 2p+r-3 \mbox{ and } d>\max\{2p-r-5,-2p+r-7\};\\
0 & \mbox{ when } d\leq \max\{2p-r-5,-2p+r-7\} \mbox{ and } d>\min\{2p-r-5,-2p+r-7\};\\
\!\!\!-1 & \mbox{ when }  d\leq \min\{2p-r-5,-2p+r-7\} \mbox{ and } d>-2p-r-9;\\
0 & \mbox{ when } d\leq -2p-r-9.
\end{cases}
\end{equation}
We now show the initial condition $m_\ll^d(L_2)=0$, whenever $d>2p+r-7$. Assume by contradiction that this is not the case, hence there exists a non-zero highest weight vector $v\in L_2$ of weight $\ll$ with $\deg v>2p+r-7$. Since the support of $L_2$ is $\ol{O}_2$, there exist a minimal integer $l\geq 1$ with $f^l \cdot v =0$. Then the element $w=f^{l-1} \cdot v \neq 0$ has highest weight $\ll$ with $\deg w > 2p+r-7$. By the sequence (\ref{eq:D2exact}), this gives a non-zero element in $H^3_{\ol{O}_2}(S/(f))$ of highest weight $\ll$ with degree $>2p+r-3$. By Proposition \ref{prop:localopen}, this contradicts (\ref{eq:degrees}), showing that $m_\ll^d(L_2)=0$, whenever $d>2p+r-7$. This initial condition together with the recursive formula (\ref{eq:recform}) determines the character of $L_2$. To see that this matches the rational expression claimed in the statement of the theorem (denoted $[M]=\oplus_{d\in \bb{Z}} \,t^d M_d$), it is enough to show that the latter satisfies the same initial condition and recursive formula (\ref{eq:recform}).

It is easy to see that if $\ll$ is a representation that appears in some $M_d$, then it must be of the form $\ll=(2p+1)\omega_1 + 2q \omega_2 + r \omega_3$, for some $p,q,r \in \bb{Z}_{\geq 0}$. By inspection, the $\bb{Z}[t,t^{-1}]$-coefficient of such $\ll$ in $[M]$ is
\[c(t)=t^{-7}\cdot (t^{2p}+t^{2p-4}+ \dots + t^{4-2p} + t^{-2p})\cdot(t^r + t^{r-2}+ \dots + t^{2-r} + t^{-r}).\]
As the highest power of $t$ in $c(t)$ with non-zero coefficient is $2p+r-7$, the initial condition is satisfied. We are left to show that the coefficients of $c(t)$ satisfy the recursive formula (\ref{eq:recform}). We have
\[t^{4}\cdot c(t) - c(t) =(t^{2p-3}-t^{-2p-7})(t^r + t^{r-2}+ \dots + t^{2-r} + t^{-r})=t^{2p+r-3}+\dots +t^a - t^b -\dots - t^{-2p-r-7},\]
where $a=\max\{2p-r-3,-2p+r-5\}$ and $b=\min\{2p-r-5,-2p+r-7\}$, which agrees with (\ref{eq:recform}).
\end{proof}

\begin{remark}\label{rem:present}
We can give an explicit $\D$-module presentation for $L_2$ as follows. From Theorem \ref{thm:D2} we see that for $\ll=\omega_1$ we have $m^{-7}_\ll(L_2)=1$, and from Section \ref{sec:character} that $m^{-7}_\ll(M)=0$ for any other simple equivariant $\D$-module $M$. Denote by $V$ the irreducible $G$-representation corresponding to $\ll$ with $\C^*$-degree $-7$. Using (\ref{eq:PVHom}), we see that the $\D$-module $\mc{P}(V)$ is the projective cover of $L_2$ in $\opmod_G(\D_X)$. By Theorem \ref{thm:quiv}, in fact we have $\mc{P}(V)\cong L_2$. Now $\mc{P}(V)$ can be given an explicit presentation as explained in \cite[p. 435]{catdmod}.
\end{remark}

\subsection{Local cohomology supported in $\ol{O}_2$ for the other cases} \label{sec:otherO2}

In this section $(G',X)$ is one of the cases $(A_5,\omega_3), (D_6,\omega_5)$ or $(E_7,\omega_6)$. We adhere to uniformity as much as possible (for the case $(A_5,\omega_3)$, see \cite{senary}).

\begin{lemma}\label{lem:grother}
Let $(G',X)$ be either . Then for all $k\gg 0$, the space of $G'$-invariants
\[H^{i} (G/P, \L^{-k}\oo\Sym (\gr \eta))^{G'}\] 
is nonzero if and only if $i=0,1,m+1,m+2,2m+1,2m+2,3m+2,3m+3$. Among these, the spaces of $\bb{C}^*$-degree $-4m-2$ are one-dimensional for $i=0,1,m+1,m+2,2m+1$ and zero otherwise, and of $\bb{C}^*$-degree $-6m-4$ are one-dimensional for $i=0,1,m+1,m+2,2m+1,2m+2,3m+2$ and zero for $i=3m+3$.
\end{lemma}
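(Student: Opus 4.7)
The plan is to mimic the proof of Lemma \ref{lem:grC3}: start from the multiplicity-free decomposition (\ref{eq:decogr}) and, for each summand $\V(a\lie'+bX_4+sX)$ (with $s=d-2c-k$), apply the Borel--Weil--Bott Theorem \ref{thm:bott} to determine which summands produce the trivial $G'$-representation and in which cohomological degree. The final answer is then obtained by tabulating the surviving summands together with the $\C^*$-degree $d$ of each.

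The key structural observation is that in each of the three cases the weights $a\lie'+bX_4+sX$ are supported on only three specific nodes of the Dynkin diagram: the distinguished node carrying $X$, together with the nodes carrying $\lie'$ and $X_4$. Inspecting the diagrams in Section \ref{sec:intro}, these three nodes always form a short subchain through the distinguished node. Consequently, after adding $\rho$, only the entry at the distinguished node can be negative at the start of the affine Weyl walk from $\lambda+\rho$ to $\rho$, and this negative entry is forced to propagate outward along the subchain. Each reflection either annihilates the walk (by producing a zero coordinate, hence a singular weight) or yields a new weight of the same restricted three-node shape but with one less negative entry. This reduces the search to a small deterministic branching tree, precisely analogous to the one used in the $C_3$ case.

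I would carry out this walk explicitly in the $(D_6,\omega_5)$ and $(E_7,\omega_6)$ cases, the $(A_5,\omega_3)$ case being treated in \cite[Section~5]{senary}. In each case the four pairs of cohomological degrees $\{0,1\}$, $\{m+1,m+2\}$, $\{2m+1,2m+2\}$, $\{3m+2,3m+3\}$ should correspond to four ``target weight shapes'' playing the role that $\V(0,0,0)$, $\V(0,2,-2)$, $\V(2,0,-4)$, $\V(0,0,-4)$ played in the $C_3$ proof. Within each pair, the two adjacent cohomological degrees differ by a single extra simple reflection at the node adjacent to the subchain endpoint, which yields the $(i,i+1)$-pairing and explains the alternating behaviour of the $\C^*$-degrees $-4m-2$ and $-6m-4$ stated in the conclusion. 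Once the surviving shapes are identified, the multiplicity at a prescribed $\C^*$-degree is read off directly from (\ref{eq:decogr}) by solving for $(a,b,c)$ with $c\geq a+b$ realising the required shape at the given value of $s$.

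The main obstacle is the combinatorial size of the Weyl group of $E_7$; however, the sparse form $(a,0,0,0,b,s,0)$ of the admissible weights, combined with parity constraints on the entries after each reflection in the spirit of (\ref{eq:parity}), restricts the relevant reflection sequences to a short subtree. The bookkeeping is then finite and tractable, and the assertion that no additional invariant summands appear in cohomological degrees outside the listed eight follows by checking that every alternative sequence of reflections either terminates at a singular weight or leaves at least one entry of $\lambda+\rho$ still negative after the maximum allowed length.
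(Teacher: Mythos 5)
Your plan — apply Borel--Weil--Bott to each irreducible summand of the multiplicity-free decomposition (\ref{eq:decogr}), record those whose Weyl walk terminates at $\rho$ (hence producing the trivial $G'$-representation), and then read the relevant $\C^*$-degree multiplicities back off the decomposition — is the same approach the paper takes, and carrying it out mechanically does yield the stated answer.

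However, the structural heuristics you offer in support of this plan are incorrect, and they would mislead you if taken seriously. The three supporting nodes do \emph{not} form a subchain of the Dynkin diagram: for $(D_6,\omega_5)$ they are $2,4,5$ (node $3$ is missing), and for $(E_7,\omega_6)$ they are $1,5,6$ (nodes $2,3,4$ are missing). The only case where the support is literally a chain is $(C_3,\omega_3)$, which is precisely the case you are excluding here. Consequently the claim that each simple reflection ``yields a new weight of the same restricted three-node shape'' is false: once the walk begins, the weight spreads across essentially the whole diagram, including the branch node. (This is already visible in the $C_3$ computation (\ref{eq:parity}), where all coordinates become active after two steps.) What actually keeps the computation tractable is not shape preservation but the rigid sign pattern of the intermediate weights, which at almost every step forces the next reflection; the walk is long (up to $3m+3=27$ reflections for $E_7$) but essentially deterministic. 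Relatedly, the parity trick of (\ref{eq:parity}) rests on the $C_3$ weight having even entries $2a,2b$ at the first two nodes; the $D_6$ and $E_7$ weights $(0,a,0,b,-x,0)$ and $(a,0,0,0,b,-x,0)$ have no such evenness, so ``parity constraints'' do not carry over. Finally, you have not carried out the enumeration at all, so the list of eight cohomological degrees, the vanishing elsewhere, and the $\C^*$-degree assertions at $-4m-2$ and $-6m-4$ remain unestablished — that explicit tabulation of the eight surviving bundle types and their cohomological degrees for each of $D_6$ and $E_7$ is the actual content of the paper's proof.
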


\begin{proof}
First, we consider the case $(G',X)=(D_6,\omega_5)$. We apply Theorem \ref{thm:bott} to a summand of type $\V(0,a,0,b,-x,0)$ with $a,b,x\geq 0$ in (\ref{eq:decogr}). Computing as in Lemma \ref{lem:grC3} we see that we obtain the trivial $G'$-representation only for the following types:
\[\V(0,0,0,0,0,0), \, \V(0,0,0,1,-2,0), \, \V(0,2,0,1,-6,0), \, \V(0,3,0,0,-6,0),\]
\[\V(0,0,0,3,-10,0),\,\V(0,1,0,2,-10,0),\,\V(0,1,0,0,-10,0),\,\V(0,0,0,0,-10,0),\]
when $i=0,1,5,6,9,10,14,15$, respectively. Assuming $k\gg 0$, we see by inspection that the corresponding summands appear in the decomposition (\ref{eq:decogr}) for degree $d-3k=-18$ only when $i=0,1,5,6,9$, and for degree $d-3k=-28$ all but the last summand $\V(0,0,0,0,-10,0)$ appear.

Next, consider the case $(G',X)=(E_7,\omega_6)$. We apply Theorem \ref{thm:bott} to a summand of type $\V(a,0,0,0,b,-x,0)$ with $a,b,x\geq 0$ in (\ref{eq:decogr}). Computing as above we see that we obtain the trivial $G'$-representation only for the following types:
\[\V(0,0,0,0,0,0,0), \, \V(0,0,0,0,1,-2,0), \, \V(4,0,0,0,1,-10,0), \, \V(5,0,0,0,0,-10,0),\]
\[\V(0,0,0,0,5,-18,0),\,\V(1,0,0,0,4,-18,0),\,\V(1,0,0,0,0,-18,0),\,\V(0,0,0,0,0,-18,0),\]
when $i=0,1,9,10,17,18,26,27$, respectively. The rest of the proof follows similarly to the previous case.
\end{proof}

The following completes the proof of Theorem \ref{thm:locring}(a)(2) (cf. \cite[Section 5.1]{senary}).

\begin{prop}\label{prop:O2other}
The only non-zero modules $H^i_{\ol{O}_2}(S)$ for $i>m+3$ are  $H^{2m+3}_{\ol{O}_2}(S)=L_1, \, H^{3m+4}_{\ol{O}_2}(S)=E$.
\end{prop}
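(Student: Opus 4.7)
The plan is to follow the template established for the $(C_3,\omega_3)$ case in Proposition~\ref{prop:C3O2}, adjusting the bookkeeping to the three remaining formats. First I would show that for every $i>m+3$ the support of $H^i_{\ol{O}_2}(S)$ is contained in $\ol{O}_1$. On the open set $U=X\setminus\ol{O}_1$ the orbit $O_2$ is smooth of codimension $m+3$, so by Kashiwara's equivalence $H^i_{O_2}(\O_U)=0$ for $i>m+3$; the standard excision triangle for the pair $\ol{O}_1\subset\ol{O}_2$ then localises $H^i_{\ol{O}_2}(S)$ onto $\ol{O}_1$.

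Next I would invoke Theorem~\ref{thm:quiv}(a): the only simple equivariant $\D$-modules supported in $\ol{O}_1$ are $L_1$ and $E$, and in case~(a) these lie in distinct connected components of $Q$, with no loops at either vertex. Hence every object of $\opmod_G^{\ol{O}_1}(\D_X)$ whose composition factors lie in $\{L_1,E\}$ is a direct sum $L_1^{a_i}\oplus E^{b_i}$. The problem reduces to computing the multiplicities $a_i,b_i$ for each $i>m+3$.

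To pin these down I would use Theorem~\ref{thm:simples}(a): the lowest degree of a $G'$-invariant section of $L_1$ is $-4m-6$ (the weight of $f^{r_2}$), and the lowest such degree in $E$ is $-6m-8$. In particular, $E$ contributes no $G'$-invariants in degree $-4m-6$, so that degree detects $a_i$; and after subtracting the now-known $L_1^{a_i}$ contribution from degree $-6m-8$, one reads off $b_i$. The raw input on the $S/(f)$ side is supplied by Proposition~\ref{prop:localopen} together with Lemma~\ref{lem:lim}, identifying the relevant $G'$-invariants of $H^j_{\ol{O}_2}(S/(f))$ with stable $G'$-invariants of $H^{j-1}(G/P,\,\L^{-k}\oo\Sym\eta)$, which are in turn controlled via the spectral sequence of the filtration (\ref{eq:greta}) by the counts in Lemma~\ref{lem:grother}; note that the twist by $(-4)$ in the long exact sequence (\ref{eq:long}) shifts the degrees of interest from $-4m-6$ and $-6m-8$ (in $S$) to precisely $-4m-2$ and $-6m-4$ (in $S/(f)$), matching the two degrees treated in Lemma~\ref{lem:grother}.

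The hard part will be promoting the upper bounds from Lemma~\ref{lem:grother} to exact equalities. Concretely, Lemma~\ref{lem:grother} allows $G'$-invariants in eight cohomological indices $i\in\{0,1,m+1,m+2,2m+1,2m+2,3m+2,3m+3\}$, but the spectral sequence from $\Sym\gr\eta$ to $\Sym\eta$ together with the non-split extension (\ref{eq:eta}) of Lemma~\ref{lem:nonsplit} should collapse these in pairs. One must check that the pair $(0,1)$ is absorbed by $H^{m+3}_{\ol{O}_2}(S)$ already analysed through its injective hull, that the pair $(m+1,m+2)$ cancels entirely at the spectral-sequence level, and that the two remaining pairs $(2m+1,2m+2)$ and $(3m+2,3m+3)$ survive to contribute a single copy of $L_1$ at $i=2m+3$ and a single copy of $E$ at $i=3m+4$ respectively. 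Running the long exact sequence (\ref{eq:long}) on $G'$-invariants in the two chosen degrees, with the action of $f$ identifying shifts of $L_1$ and $E$ summands, then forces $a_{2m+3}=b_{3m+4}=1$ and all other multiplicities to vanish, yielding the claimed vanishing together with $H^{2m+3}_{\ol{O}_2}(S)=L_1$ and $H^{3m+4}_{\ol{O}_2}(S)=E$.
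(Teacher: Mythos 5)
Your high-level strategy is the same as the paper's: localize to conclude the support is in $\ol{O}_1$ for $i>m+3$, invoke Theorem~\ref{thm:quiv}(a) to reduce to counting copies of $L_1$ and $E$, track their $f$-annihilated $G'$-invariants (degrees $-4m-6$, $-6m-8$) through the long exact sequence (\ref{eq:long}) into degrees $-4m-2$, $-6m-4$ of $H^\bullet_{\ol{O}_2}(S/(f))$, and match these against Lemma~\ref{lem:grother} via Proposition~\ref{prop:localopen}. That much is correct and well-organized.

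However, your bookkeeping in the final step is off, and the resulting gap is not cosmetic. You claim the ``pair $(2m+1,2m+2)$ survives to contribute a copy of $L_1$'' and ``$(3m+2,3m+3)$ survives to contribute a copy of $E$.'' Neither description is accurate. For the $L_1$ degree $-4m-2$, Lemma~\ref{lem:grother} gives an invariant at $i=2m+1$ but \emph{not} at $i=2m+2$, so there is no pair --- $i=2m+1$ simply has no neighbour to cancel against and yields the $L_1$ in $H^{2m+3}_{\ol{O}_2}(S)$. For the $E$ degree $-6m-4$, the invariant at $i=3m+2$ has nothing at $i=3m+3$ in that degree and so survives, yielding $E$ in $H^{3m+4}_{\ol{O}_2}(S)$. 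But the genuine pair $(2m+1,2m+2)$ in degree $-6m-4$ \emph{must cancel} (otherwise one would get spurious copies of $E$ in $H^{2m+3}_{\ol{O}_2}(S)$ or $H^{2m+4}_{\ol{O}_2}(S)$), and this is the hardest part of the proof. Establishing that cancellation requires showing a specific connecting homomorphism in the spectral sequence from $\Sym(\gr\eta)$ to $\Sym\eta$ is non-trivial; in the paper this is done by comparing it (after a shift $k\to k-2$, $d\to d-2$, placing the same connecting map in $\mathbb{C}^*$-degree $-6m$) against what is already known: since $E^{G'}$ and $L_1^{G'}$ have no elements in degree $-6m$, the exact sequence (\ref{eq:long}) forces the shifted connecting map, hence the original one, to be an isomorphism on $G'$-invariants. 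Your proposal ends with an appeal to ``running the long exact sequence on $G'$-invariants\dots forces $a_{2m+3}=b_{3m+4}=1$ and all other multiplicities to vanish,'' but without the connecting-homomorphism comparison this does not rule out $E$ occurring in $H^{2m+3}_{\ol{O}_2}(S)$ or $H^{2m+4}_{\ol{O}_2}(S)$. That non-formal step is precisely what needs to be supplied.
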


\begin{proof}
The modules $H^i_{\ol{O}_2}(S) = 0$ for $i>m+3$ have support contained in $\ol{O}_1$. Hence, by the description of the category in Theorem \ref{thm:quiv} (a), they must be direct sums of modules of type $L_1$ and $E$.

The modules $L_1$ (resp. $E$) have $G$-semi-invariant elements of degree $-4m-6$ (resp. $-6m-8$) that are annihilated by $f$ (see Theorem \ref{thm:simples}). Moreover, multiplication by $f$ is surjective both on $L_1^{G'}$ and on $E^{G'}$ (see Theorem \ref{thm:simples}) and also on the $G'$-invariant space of $H^{m+3}_{\ol{O}_2}(S) \cong S_f \sqrt{f}/\D f^{r_1+1}$. By the long exact sequence (\ref{eq:long}) together with Proposition \ref{prop:localopen} and Lemma \ref{lem:grother}, we conclude that $L_1$ (resp. $E$) can only appear in the local cohomology modules $H^i_{\ol{O}_2}(S)$ for $i=m+4,2m+3$ (resp. $i=m+4,2m+3,2m+4,3m+4$). Since there are no cancelations possible for $i=2m+3$ (resp. $i=3m+4$) in the spectral sequence when passing from the associated graded $\gr \eta$ to $\eta$ (\ref{eq:greta}), we see that $L_1$ is a summand of $H^{2m+3}_{\ol{O}_2}(S)$ (resp. $H^{3m+4}_{\ol{O}_2}(S)=E$). We will now show that the rest of the terms must cancel out in the spectral sequence.

We observe that $H^{m+3}_{\ol{O}_2}(S)= S_f \sqrt{f}/\D f^{r_1+1}$ has no $G$-semi-invariant of degree $-4m-2$ (resp. $-6m-4$) that is annihilated by $f$. Together with the sequence (\ref{eq:long}), Proposition \ref{prop:localopen} and Lemma \ref{lem:lim}, this implies that for $i=m+1,m+2$ the semi-invariants in degree $-4m-2$ (resp. $-6m-4$) from Lemma \ref{lem:grother} must cancel each other out in the spectral sequence.

We are left to show that the semi-invariants in Lemma \ref{lem:grother} in degree $-6m-4$ between $i=2m+1,2m+2$ cancel each other out in the spectral sequence. This can be seen as in the proof of \cite[Proposition 5.9]{senary}, by comparing the relevant connecting homomorphism between them to another one in a different degree. We sketch the argument as follows. 
The sequence (\ref{eq:eta}) induces the following short exact sequences:
\[\begin{array}{lcccr}
0 \to \,\, \L^{-k}\oo \Sym_d \Omega_{G/P}\oo \O(d) & \longrightarrow & \L^{-k} \oo \Sym_d \eta & \longrightarrow & \L^{-k} \oo \Sym_{d-1}\eta \oo \O(1) \,\,\to 0 \\
0 \to \,\, \L^{-k}\oo \Sym_{d-1} \Omega_{G/P} \oo \O(d) & \longrightarrow & \L^{-k} \oo \Sym_{d-1} \eta \oo \O(1) &\longrightarrow & \L^{-k} \oo \Sym_{d-2}\eta \oo\O(2) \,\,\to 0 \\
 \qquad \qquad \vdots & & \vdots && \vdots \qquad\qquad\qquad\qquad\\
 0 \to \,\, \L^{-k}\oo \Omega_{G/P}\oo\O(d) \quad & \longrightarrow & \L^{-k} \oo \eta \oo \O(d-1) & \longrightarrow & \L^{-k} \oo \O(d) \quad \,\,\to 0
\end{array}\]
From the proof of Lemma \ref{lem:grother}, we need to track the two irreducible bundles in $\L^{-k} \oo \Sym (\gr \eta)$ yielding a $G'$-invariant in degree $-6m-4$ in cohomological degrees $i=2m+1$ and $2m+2$. Putting $d=3k-6m-4$ with $k\gg 0$, we get that the only potentially non-zero connecting homomorphism on the level of $G'$-invariants (between $1$-dimensional spaces) is induced by the $(m+1)$th exact sequence from above (recall that $\L \cong \O(1)$):
\begin{equation}\label{eq:connect}
\begin{aligned}
0 \to H^{2m+1}(G/P, \, \Sym_{d-m}\eta \oo \O(m-k))^{G'} \to H^{2m+1}(G/P, \, \Sym_{d-m-1}\eta \oo \O(m+1-k))^{G'}\longrightarrow \\
\longrightarrow H^{2m+2}(G/P, \, \Sym_{d-m} \Omega_{G/P} \oo \O(d-k))^{G'} \to H^{2m+2}(G/P, \, \Sym_{d-m}\eta \oo \O(m-k))^{G'} \to 0.
\end{aligned}
\end{equation}

Now put $k\to k-2$ and $d \to d-2$. Then repeating the discussion above gives a $G'$-invariant in degree $d-3k = -6m$ and in cohomological degrees $2m+1$ and $2m+2$ again. The relevant connecting homomorphism comes from the $(m-1)$th exact sequence from above, and it is precisely the same as the connecting homomorphism in (\ref{eq:connect}) (ignoring the $\C^*$-action). Since $E^{G'}$ and $L_1^{G'}$ have no elements in degrees $-6m$ and $-6m-4$, we see from (\ref{eq:long}) that $H^{2m+2}_{\ol{O}_2}(S/(f))^{G'}$ and $H^{2m+3}_{\ol{O}_2}(S/(f))^{G'}$ have no elements in degree $-6m$. By Proposition \ref{prop:localopen} and Lemma \ref{lem:lim}, this implies that the connecting homomorphism (\ref{eq:connect}) must be non-trivial.
\end{proof}

\section{Other invariants}\label{sec:other}

\subsection{Lyubeznik numbers and intersection cohomology groups of orbit closures}\label{sec:lyubint}

In this section, we determine some local cohomology groups of equivariant $\D$-modules with support in the origin. We then use these computations to determine the Lyubeznik numbers (for the $(A_5,\omega_3)$ case see \cite[Section 5]{senary}) and the (middle perversity) intersection cohomology groups of the orbit closures $\ol{O}_1,\ol{O}_2,\ol{O}_3$.

We start with an observation relating the intersection cohomology groups to the local cohomology groups.

\begin{prop}\label{prop:IH}
Let $p\in\{1,2,3\}$ and $c_p= \codim_X O_p$. For all $i\in \bb{Z}$, we have
\[\mbox{\large\(H_{\{0\}}^{i+c_p}(\F(L_p))\)} = \mbox{\large\(E\)}^{\mbox{\small\(\oplus \dim IH^i(\ol{O}_p)\)}}. \]
\end{prop}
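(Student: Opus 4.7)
The strategy is a Fourier--Sato duality computation. First, since the origin forms a single $G$-orbit and the module $E$ is up to isomorphism the unique simple $G$-equivariant $\D_X$-module supported at $\{0\}$, any equivariant holonomic $\D_X$-module supported at the origin must be a direct sum of copies of $E$ (its only possible composition factor is $E$, and $E$ being injective on $\opmod_G^{\{0\}}(\D_X)$ makes the extensions split). Consequently $H^{i+c_p}_{\{0\}}(\F(L_p)) \cong E^{\oplus \mu_i}$ for some $\mu_i \ge 0$, and the task reduces to identifying $\mu_i = \dim IH^i(\ol{O}_p)$.

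The key input is the fundamental exchange property of the Fourier transform on a complex vector space $X$: for the inclusion $i_0:\{0\}\hookrightarrow X$ and the projection $a:X\to\operatorname{pt}$, there is a canonical isomorphism $i_0^!\,\F(N)\cong a_+(N)$ (up to a fixed shift by $\dim X$) in the derived category of $\D$-modules, for every regular holonomic $N$. Combined with $R\Gamma_{\{0\}}(M) = i_{0+}\,i_0^!\,M$ and the fact that $i_{0+}$ sends a vector space $V$ placed in degree $0$ to $E\otimes V$, this gives the numerical identity
\[
H^{j}_{\{0\}}(\F(N)) \cong E \otimes H^{\,j-\dim X}\!\bigl(a_+ N\bigr).
\]
A quick sanity check with $N=S$ (where $\F(S)=E$) and $N=E$ (where $\F(E)=S$) pins down the shift. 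Setting $N=L_p$ and $j=i+c_p$ with $c_p=\dim X-\dim O_p$ yields
\[
H^{i+c_p}_{\{0\}}(\F(L_p)) \cong E \otimes H^{\,i-\dim O_p}(a_+ L_p).
\]

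To conclude, I would apply Riemann--Hilbert, under which $L_p$ corresponds to the intersection cohomology perverse sheaf $IC(\ol{O}_p)$, since $L_p$ is by definition the simple $\D$-module associated to the trivial local system on $O_p$. Thus $a_+L_p$ computes $R\Gamma(X,IC(\ol{O}_p))$, and the standard convention gives $\mathbb{H}^j(X,IC(\ol{O}_p)) = IH^{j+\dim O_p}(\ol{O}_p)$. Substituting $j = i-\dim O_p$ produces $IH^i(\ol{O}_p)$, proving the proposition.

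The only genuine point of care is the bookkeeping: four shifts (from Fourier--Sato exchange, from the de Rham functor, from the IC normalization, and from the $i_{0+}\circ i_0^!$ adjunction) enter the argument, and one must check that they combine to give precisely the $c_p=\dim X-\dim O_p$ offset in the statement. No new geometric input beyond results already established is required; the exchange property is the single nontrivial piece of outside machinery.
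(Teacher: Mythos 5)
Your argument is essentially identical to the paper's: both rest on the same three ingredients — the Riemann--Hilbert correspondence identifying $L_p$ with the intersection cohomology complex, the Fourier transform exchange property identifying restriction to the origin of $\F(L_p)$ with the pushforward of $L_p$ to a point, and the identification of derived restriction at $\{0\}$ with local cohomology supported there. The preliminary observation that every equivariant module supported at the origin is a direct sum of copies of $E$ is a welcome clarification, but the core of the proof is the same chain of identifications the paper cites from Hotta--Takeuchi--Tanisaki.
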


\begin{proof}
By the Riemann--Hilbert correspondence (especially \cite[Theorem 7.1.1]{htt}), the intersection cohomology groups $IH^i(\ol{O}_p)$ can be computed as the (derived) pushforward of the module $L_p$ to a point. The latter is equivalent to the restriction of the Fourier transform $\F(L_p)$ to a point \cite[Proposition 3.2.6]{htt}. By \cite[Proposition 1.7.1]{htt}, this can be computed as local cohomology supported at the origin.
\end{proof}

\begin{lemma}\label{lem:locD1}
The module $H^i_{\{0\}}(L_1)$ is non-zero (in which case it is isomorphic to $E$) if and only if:
\begin{itemize}
\item[(a)] When $(G',X)$ is $(A_5,\omega_3),\, (D_6, \omega_5)$ or $(E_7,\omega_6): \, i=m+2,2m+2,3m+4$;
\item[(b)] When $(G',X)=(C_3,\omega_3): \, i=1,\, 3m+4$.
 \end{itemize}
\end{lemma}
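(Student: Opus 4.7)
The plan is to exploit the Grothendieck spectral sequence arising from the composition $\Gamma_{\{0\}} = \Gamma_{\{0\}} \circ \Gamma_{\ol{O}_1}$ (which holds since $\{0\} \subset \ol{O}_1$):
\[E_2^{p,q} = H^p_{\{0\}}\bigl(H^q_{\ol{O}_1}(S)\bigr) \Longrightarrow H^{p+q}_{\{0\}}(S).\]
The abutment equals $E$ at total degree $6m+8$ and is zero otherwise, the inputs $H^q_{\ol{O}_1}(S)$ are supplied by Theorem \ref{thm:locring}, and $H^p_{\{0\}}(E) = E$ precisely when $p=0$. Since every equivariant $\D$-module supported at the origin is a direct sum of copies of $E$, the argument reduces to tracking multiplicities of $E$.

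For case (a), Theorem \ref{thm:locring}(a)(1) makes the non-zero $E_2$-entries the entire row $E_2^{p,3m+4} = H^p_{\{0\}}(L_1)$ together with $E_2^{0,4m+5} = E$ and $E_2^{0,5m+5} = E$. Checking the bidegrees of the differentials $d_r : E_r^{p,q} \to E_r^{p+r, q-r+1}$ against this sparse set, the only ones that can be non-trivial at any page are
\[d_{m+2}: E_{m+2}^{0,4m+5} \to E_{m+2}^{m+2,3m+4}, \qquad d_{2m+2}: E_{2m+2}^{0,5m+5} \to E_{2m+2}^{2m+2,3m+4},\]
and no differential touches the entries $E_r^{p,3m+4}$ for $p \notin \{m+2, 2m+2\}$. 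The $E$-summands at $(0,4m+5)$ and $(0,5m+5)$ must die (their total degrees are not $6m+8$), forcing both differentials to be isomorphisms onto a single copy of $E$ and yielding $H^{m+2}_{\{0\}}(L_1) = H^{2m+2}_{\{0\}}(L_1) = E$. The required $E$ in total degree $6m+8$ can then only come from the untouched entry $E_\infty^{3m+4,3m+4}$, forcing $H^{3m+4}_{\{0\}}(L_1) = E$. For any other $p$, the entry $E_\infty^{p,3m+4} = H^p_{\{0\}}(L_1)$ survives unchanged and contributes to a total degree different from $6m+8$, so it must vanish.

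For case (b), Theorem \ref{thm:locring}(b)(1) provides the non-split extension
\[0 \to L_1 \to H^{3m+4}_{\ol{O}_1}(S) \to E \to 0,\]
and no other $H^q_{\ol{O}_1}(S)$ is non-zero. The spectral sequence degenerates onto the single row $q = 3m+4$ and gives $H^p_{\{0\}}\bigl(H^{3m+4}_{\ol{O}_1}(S)\bigr) = E$ for $p = 3m+4$ and zero otherwise. Applying $R\Gamma_{\{0\}}$ to the extension and comparing with $H^p_{\{0\}}(E) = E$ only at $p = 0$, one reads off $H^1_{\{0\}}(L_1) = E$ via the connecting map from $H^0_{\{0\}}(E)$, $H^{3m+4}_{\{0\}}(L_1) = E$ as an isomorphism, and vanishing of the remaining cohomologies.

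The main delicate point is the bidegree chase in case (a) eliminating all differentials beyond $d_{m+2}$ and $d_{2m+2}$; this is forced purely by the constraint $r \geq 2$ together with the fact that the rows $q = 4m+5$ and $q = 5m+5$ are supported only at $p = 0$, so the row $q = 3m+4$ can only be affected by the two differentials identified above.
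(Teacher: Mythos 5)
Your proof is correct and follows the same approach as the paper: both apply the spectral sequence $H^p_{\{0\}}(H^q_{\ol{O}_1}(S)) \Rightarrow H^{p+q}_{\{0\}}(S)$ together with Theorem \ref{thm:locring}, using the collapse onto the single row $q=3m+4$ and the long exact sequence in case (b), and the bidegree chase identifying $d_{m+2}$ and $d_{2m+2}$ in case (a). Your write-up simply spells out the differential chase in more detail than the paper's sketch.
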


\begin{proof}
This follows readily by considering the spectral sequence $H^i_{\{0\}}(H^j_{\ol{O}_1}(S)) \Rightarrow H^{i+j}_{\{0\}}(S)$ together with Theorem \ref{thm:locring}. We give more details for part (b). In this case, the latter spectral sequence degenerates already on the second page, so that we have $H^{3m+4}_{\{0\}}(H^{3m+4}_{\ol{O}_1}(S)) = E$, and $H^{i}_{\{0\}}(H^{3m+4}_{\ol{O}_1}(S)) =0$ for $i\neq 3m+4$. The claim now follows by using the long the exact sequence associated to $0\to L_1 \to H^{3m+4}_{\ol{O}_1}(S) \to E \to 0$.
\end{proof}

\begin{lemma}\label{lem:locD2}
The module $H^i_{\{0\}}(L_2)$ is non-zero (in which case it is isomorphic to $E$) if and only if:
\begin{itemize}
\item[(a)] When $(G',X)$ is $(A_5,\omega_3),\, (D_6, \omega_5)$ or $(E_7,\omega_6)$: \, $i=m+3,2m+3,3m+3,3m+5,4m+5,5m+5$;
\item[(b)] When $(G',X)=(C_3,\omega_3)$: \, $i=m+3,\, 5m+5$.
 \end{itemize}
\end{lemma}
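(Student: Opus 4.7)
The plan is to apply the Grothendieck spectral sequence
\[E_2^{p,q} = H^p_{\{0\}}\bigl(H^q_{\ol{O}_2}(S)\bigr) \Longrightarrow H^{p+q}_{\{0\}}(S),\]
whose abutment is $E$ at total degree $6m+8$ and zero otherwise. The non-zero $q$-columns are read off from Theorem \ref{thm:locring}; the column at $q = 2m+3$ (present only in case (a)) is handled by Lemma \ref{lem:locD1}, and $H^p_{\{0\}}(E) = E$ is concentrated at $p = 0$. Since every $G$-equivariant coherent $\D_X$-module supported at the origin is a finite direct sum of copies of the simple $E$, it suffices to track multiplicities.

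For case (b), only two columns are non-zero, and no differential $d_r$ with $r < 2m+2$ is possible; hence $E_{2m+2}=E_2$. The single non-trivial differential
\[d_{2m+2}\colon E^{0,\,3m+4}_{2m+2} = E \longrightarrow E^{2m+2,\,m+3}_{2m+2} = H^{2m+2}_{\{0\}}(L_2)\]
must be both injective and surjective in order that the abutment vanish at total degrees $3m+4$ and $3m+5$; hence $H^{2m+2}_{\{0\}}(L_2) = E$. The surviving class $E$ at total degree $6m+8$ forces $H^{5m+5}_{\{0\}}(L_2) = E$, and the vanishing of the abutment in every other degree forces the rest of the $q=m+3$ column to vanish. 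Since $m=1$, the indices $2m+2$ and $m+3$ both equal $4$, yielding $i=m+3,\, 5m+5$.

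For case (a), I will combine the same spectral sequence with the short exact sequence $0 \to L_2 \to H^{m+3}_{\ol{O}_2}(S) \to L_1 \to 0$ from Theorem \ref{thm:locring}(a)(2), computing first $B^p := H^p_{\{0\}}(H^{m+3}_{\ol{O}_2}(S))$. The only non-trivial differentials are $d_{m+1}, d_{m+2}, d_{2m+2}$, pairing up the three non-zero columns. Tracking the three successive pages and invoking the abutment constraints pins down $B^p$ at every index, with a pleasant ambiguity at $p=2m+2,\,2m+3$: the spectral sequence admits two consistent solutions, namely $B^{2m+2}=B^{2m+3}=0$ or $B^{2m+2}=B^{2m+3}=E$. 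Both alternatives, when combined with $H^p_{\{0\}}(L_1)$ from Lemma \ref{lem:locD1}(a) through the long exact sequence in local cohomology, force $H^{2m+2}_{\{0\}}(L_2)=0$ and $H^{2m+3}_{\{0\}}(L_2)=E$, so the ambiguity is harmless. The remaining indices follow directly, yielding $E$ at $i=m+3,\,2m+3,\,3m+3,\,3m+5,\,4m+5,\,5m+5$ and zero elsewhere.

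The principal technical obstacle is the page-by-page spectral sequence bookkeeping in case (a); each relevant differential must be argued to have the correct rank using the abutment constraints, and one must verify that the residual spectral-sequence ambiguity indeed washes out upon passing through the long exact sequence (as it does: in both alternatives, the alternating-dimension count in the corresponding segment of the long exact sequence uniquely determines $H^{2m+2}_{\{0\}}(L_2)$ and $H^{2m+3}_{\{0\}}(L_2)$).
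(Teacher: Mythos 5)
Your case (b) argument is correct and follows the paper's route (the paper just says ``Part (b) follows easily''). Case (a), however, has a genuine gap exactly where you flag a ``pleasant ambiguity.''

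You correctly observe that the spectral sequence alone is consistent with two solutions for $B^p := H^p_{\{0\}}(H^{m+3}_{\ol{O}_2}(S))$, namely $B^{2m+2}=B^{2m+3}=0$ or $B^{2m+2}=B^{2m+3}=E$. But it is false that both alternatives force $H^{2m+2}_{\{0\}}(L_2)=0$ and $H^{2m+3}_{\{0\}}(L_2)=E$ via the long exact sequence, and in particular the alternating-dimension count does \emph{not} pin down both terms. In the second scenario the relevant segment of the long exact sequence reads
\[
0 \to H^{2m+2}_{\{0\}}(L_2) \to B^{2m+2}=E \xrightarrow{\;\phi\;} H^{2m+2}_{\{0\}}(L_1)=E \to H^{2m+3}_{\{0\}}(L_2) \to B^{2m+3}=E \to 0,
\]
and the Euler characteristic only yields $\dim H^{2m+3}_{\{0\}}(L_2) - \dim H^{2m+2}_{\{0\}}(L_2) = 1$. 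Since $E$ is simple, $\phi$ is either an isomorphism or zero: if $\phi$ is an isomorphism you get $(0,E)$, but if $\phi=0$ you get $(E,E^{\oplus 2})$. Nothing in your argument rules out the latter, so the ambiguity does \emph{not} wash out.

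The paper resolves this by an independent vanishing result that you are missing: it first shows $B^p=0$ for $p<3m+3$, which excludes the problematic scenario before the long exact sequence is ever applied. This is done by rewriting $H^{m+3}_{\ol{O}_2}(S)\cong H^1_{\ol{O}_3}(L_4')$ (since in case (a) one has $H^{m+3}_{\ol{O}_2}(S)\cong S_f\sqrt{f}/\D f^{r_1+1}$ and $L_4'=\D f^{r_1+1}$), noting that the spectral sequence $H^i_{\{0\}}(H^j_{\ol{O}_3}(L_4'))$ degenerates so that $B^p = H^{p+1}_{\{0\}}(L_4')$, and then using $\F(L_4')=L_1$ together with Proposition~\ref{prop:IH} to conclude $H^{i}_{\{0\}}(L_4')=0$ for $i<3m+4=\codim_X O_1$. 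You need an input of this kind; the spectral sequence plus the long exact sequence is not enough.
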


\begin{proof}
We consider the spectral sequence $H^i_{\{0\}}(H^j_{\ol{O}_2}(S)) \Rightarrow H^{i+j}_{\{0\}}(S)$ together with Theorem \ref{thm:locring}. Part (b) follows easily. For part (a) assuming that $H^{i}_{\{0\}}(H^{m+3}_{\ol{O}_2}(S)) =0 $ for $i<3m+3$, it follows from the spectral sequence (using Lemma \ref{lem:locD1}) that
\[H^{i}_{\{0\}}(H^{m+3}_{\ol{O}_2}(S)) =E, \mbox{ for } i=3m+3,4m+5,5m+5,\]
and it is zero for all other $i$. Using the exact sequence $0\to L_2 \to H^{m+3}_{\ol{O}_2}(S) \to L_1\to 0$ proves the claim.

We are left to show the vanishing of $H^{i}_{\{0\}}(H^{m+3}_{\ol{O}_2}(S))$ for $i<3m+3$. Note that $H^{m+3}_{\ol{O}_2}(S))\cong H^1_{\ol{O}_3}(L_4')$, and also $H^i_{\{0\}}(H^1_{\ol{O}_3}(L_4')) = H^{i+1}_{\{0\}}(L_4')$, since the corresponding spectral sequence degenerates. Hence, we are left to show that $H^{i}_{\{0\}}(L_4')=0$, for $i<3m+4$. Since $\F(L_4')=L_1$, by Proposition \ref{prop:IH} the latter follows since $\codim_X O_p=3m+4$.
\end{proof}

\begin{lemma}\label{lem:locD3}
The module $H^i_{\{0\}}(L_3)$ is non-zero (in which case it is isomorphic to $E$) if and only if:
\begin{itemize}
\item[(a)] When $(G',X)$ is $(A_5,\omega_3),\, (D_6, \omega_5)$ or $(E_7,\omega_6)$: \, $i=1,6m+7$;
\item[(b)] When $(G',X)=(C_3,\omega_3)$: \, $i=3m+4,\, 6m+7$.
 \end{itemize}
\end{lemma}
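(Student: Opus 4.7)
The plan is to exploit the spectral sequence $H^p_{\{0\}}(H^q_{\ol{O}_3}(S)) \Rightarrow H^{p+q}_{\{0\}}(S)$ in conjunction with Theorem~\ref{thm:locring}(3). Since $\ol{O}_3$ is a hypersurface cut out by $f$, only $H^1_{\ol{O}_3}(S) = S_f/S$ contributes, so the spectral sequence collapses to $H^i_{\{0\}}(H^1_{\ol{O}_3}(S)) \cong H^{i+1}_{\{0\}}(S)$, which equals $E$ at $i = 6m+7$ and vanishes otherwise.

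For case (a), I would feed the short exact sequence $0 \to L_3 \to H^1_{\ol{O}_3}(S) \to E \to 0$ through the long exact sequence in local cohomology. Since $H^i_{\{0\}}(E)$ is $E$ at $i=0$ and zero elsewhere, the connecting homomorphism out of $H^0_{\{0\}}(E)$ forces $H^1_{\{0\}}(L_3) = E$, while $H^{6m+7}_{\{0\}}(L_3) = E$ is inherited directly from $H^{6m+7}_{\{0\}}(H^1_{\ol{O}_3}(S))$; all other degrees vanish.

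For case (b), the analogous treatment of $0 \to L_3 \to H^1_{\ol{O}_3}(S) \to L_2' \to 0$ reduces the problem to a preliminary computation of $H^i_{\{0\}}(L_2')$. Attempting this via the filtration $0 \subset S \subset \D f^{-1} \subset S_f$ alone loops back to $H^i_{\{0\}}(L_3)$ and produces only a tautological system of relations, so the cleanest route is via the Fourier transform. Since $\F(L_2') \cong L_4'$ in case~(b), I would instead compute $H^i_{\{0\}}(L_4')$ using the filtration $0 \subset L_4' \subset \D f^{-5/2} \subset S_f\sqrt{f}$ with simple successive quotients $L_4', L_1, E$ from the proof of Theorem~\ref{thm:simples}(b). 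The module $S_f\sqrt{f}$ is supported away from the origin, so $H^i_{\{0\}}(S_f\sqrt{f}) = 0$; unwinding the two long exact sequences attached to the filtration with Lemma~\ref{lem:locD1}(b) in hand, together with a naturality check identifying the induced map $H^1_{\{0\}}(\D f^{-5/2}) \to H^1_{\{0\}}(L_1)$ as an isomorphism (both sides arise as connecting maps out of $H^0_{\{0\}}(E)$ for the compatible extensions $0 \to \D f^{-5/2} \to S_f\sqrt{f} \to E \to 0$ and $0 \to L_1 \to H^{3m+4}_{\ol{O}_1}(S) \to E \to 0$), will pin down $H^i_{\{0\}}(L_4') = E$ concentrated in degree $i = 3m+5$.

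Transferring to $L_2'$ then proceeds via the Fourier--Sato/Deligne identity $H^i_{\{0\}}(\F M) \cong \bb{H}^{i - c}(M)$ with $c = \codim(\operatorname{supp} M)$---equivalently, the intersection-cohomology interpretation underlying Proposition~\ref{prop:IH} applied to the non-trivial local system on $O_2$---which forces $H^i_{\{0\}}(L_2')$ to be concentrated at $i = 3m+3$. Plugging this back into the long exact sequence for $0 \to L_3 \to H^1_{\ol{O}_3}(S) \to L_2' \to 0$, the connecting map emanating from $H^{3m+3}_{\{0\}}(L_2')$ contributes $H^{3m+4}_{\{0\}}(L_3) = E$, while $H^{6m+7}_{\{0\}}(L_3) = E$ descends from the middle term, matching the claimed values. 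The main obstacle is thus the computation of $H^i_{\{0\}}(L_2')$ in case (b): without the Fourier detour the available long exact sequences merely reassert $H^i_{\{0\}}(L_3) \cong H^{i-1}_{\{0\}}(L_2')$ in the relevant range, so computing $H^i_{\{0\}}(L_4')$ first---where the filtration involves only modules with already-known local cohomology at $\{0\}$---and then invoking Fourier symmetry is the decisive step.
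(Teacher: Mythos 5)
Part (a) follows the paper's argument. For part (b), your intermediate computation that $H^i_{\{0\}}(L_4')$ equals $E$ concentrated at $i = 3m+5$ is correct: the filtration of $S_f\sqrt{f}$, the vanishing $H^\bullet_{\{0\}}(S_f\sqrt{f}) = 0$, and the naturality argument identifying the connecting maps all check out.

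However, the final Fourier transfer has a genuine gap. The identity underlying Proposition~\ref{prop:IH} relates $H^\bullet_{\{0\}}(\F M)$ to the de Rham pushforward of $M$ to a point --- hence to the intersection cohomology of $\operatorname{supp} M$ --- and \emph{not} to $H^\bullet_{\{0\}}(M)$. Applied with $M = L_2'$ (so $\F M = L_4'$ and $c = c_2 = m+3$), your computation of $H^\bullet_{\{0\}}(L_4')$ only tells you that $IH^i(\ol{O}_2)$ with coefficients in the nontrivial local system is one-dimensional at $i = 2m+2$ and vanishes otherwise; it gives no information on $H^\bullet_{\{0\}}(L_2')$. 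Extracting $H^\bullet_{\{0\}}(L_2')$ via Fourier would instead require the pushforward of $L_4'$ to a point (apply the identity with $M = L_4'$, $c = 0$), which is a different invariant from $H^\bullet_{\{0\}}(L_4')$ and is not determined by your filtration argument --- it would need, e.g., the cohomology of $O_4$ with coefficients in the order-two local system, which you have not computed. The numerology already signals the problem: $M = L_2'$ yields $2m+2$, and naively conflating the two local cohomologies yields $3m+5$, neither of which is the claimed $3m+3$. The value $3m+3$ for $H^\bullet_{\{0\}}(L_2')$ happens to be correct (it is forced by the long exact sequence once the lemma is known), but your argument does not establish it, and the rest of the proof relies on it.

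The paper sidesteps this entirely: it applies Proposition~\ref{prop:IH} with $p=1$, using $\F(L_1) = L_3$, and reads off $\dim IH^i(\ol{O}_1)$ directly from the Poincar\'e polynomial~(\ref{eq:poin}) of $G/P$ via the truncation/primitive-cohomology formula for affine cones in \cite{borel}, obtaining $\dim IH^i(\ol{O}_1) = 1$ at $i = 0$ and $i=4m+2$ only, and hence the stated degrees $3m+4$ and $6m+7$ for $L_3$ without any detour through $L_2'$ or $L_4'$.
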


\begin{proof}
For part (a) we have $H^i_{\{0\}}(H^1_{\ol{O}_3}(S)) = H^{i+1}_{\{0\}}(S)$, since the corresponding spectral sequence degenerates. Hence, the claim follows readily by Theorem \ref{thm:locring}. 

For part (b), since $\F(L_3)=L_1$ it is enough to compute $IH^i(\ol{O}_1)$ by Proposition \ref{prop:IH}. Because $\ol{O}_1$
is the affine cone over $G/P$, the latter can be computed as explained in \cite[Section I.5]{borel}. Using the Poincar\'e polynomial (\ref{eq:poin}) we get that $\dim IH^i(\ol{O}_1)=1$ for $i=0,6$ and it is $0$ otherwise.
\end{proof}

Our results above determine also the Lyubeznik numbers $\ll_{i,j}(R_p)$ (see \cite{lyubeznik}) of the orbit closures $\ol{O}_p$ for $p=1,2,3$, where $R_p:=\C[\ol{O}_p]_{\mf{m}}$ is the localization of the coordinate ring of $\ol{O}_p$ at the maximal homogeneous ideal $\mf{m}$. Since $\ll_{i,j}(R_p)$ equals the multiplicity of $E$ in $H^i_{\{0\}}(H^{\dim X-j}_{\ol{O}_p}(S))$, we obtain the following by our previous calculations in this section together with Theorem \ref{thm:locring}.

\begin{corollary}\label{cor:lyubez}
For $p\in\{1,2,3\}$ the following are the only non-zero Lyubeznik numbers $\ll_{i,j}(R_p)$ (in which case they are equal to 1):
\begin{itemize}
\item[(a)] If $(G',X)$ is $(A_5,\omega_3),\, (D_6, \omega_5)$ or $(E_7,\omega_6)$:
\begin{itemize}
\item[(1)] When $p=1: \, (i,j)=(0, m+3), \, (0, 2m+3), \, (m+2, 3m+4),\, (2m+2, 3m+4),\, (3m+4, 3m+4);$
\item[(2)] When $p=2: \, (i,j)=(0, 3m+4), \, (m+2, 4m+5), \, (2m+2, 4m+5), \, (3m+4, 4m+5),\\ 
{} \hspace{1.45in} (3m+3, 5m+5), \, (4m+5, \,5m+5), \, (5m+5, 5m+5);$
\item[(3)] When $p=3: \,(i,j)=(6m+7,6m+7).$
\end{itemize}
\item[(b)] If $(G',X)$ is $(C_3,\omega_3)$:
\begin{itemize}
\item[(1)] When $p=1: \, (i,j)=(3m+4,3m+4);$
\item[(2)] When $p=2: \, (i,j)= (0,3m+4),\, (2m+2, 5m+5), \, (5m+5, 5m+5) ;$
\item[(3)] When $p=3: \, (i,j)=(6m+7,6m+7).$
\end{itemize}
\end{itemize}
\end{corollary}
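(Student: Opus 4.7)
The plan is to exploit the characterization $\ll_{i,j}(R_p) = \op{mult}(E, H^i_{\{0\}}(H^{\dim X-j}_{\ol{O}_p}(S)))$ recorded just before the corollary, and reduce everything to three ingredients: Theorem~\ref{thm:locring} (which identifies $H^{\dim X-j}_{\ol{O}_p}(S)$ as either a simple equivariant $\D$-module or a two-term extension built out of $S, L_1, L_2, L_3, L_2', E$), Lemmas~\ref{lem:locD1}, \ref{lem:locD2}, \ref{lem:locD3} (which compute $H^i_{\{0\}}(L_p)$ for $p=1,2,3$), and the trivial observations that $H^i_{\{0\}}(E) = E$ concentrated in $i=0$, while $H^i_{\{0\}}(S) = E$ concentrated in $i=\dim X = 6m+8$.

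First, for each $p\in\{1,2,3\}$ I would list the finitely many values of $j$ for which $H^{\dim X-j}_{\ol{O}_p}(S)\neq 0$ using Theorem~\ref{thm:locring}. When the corresponding module is already simple ($L_1,L_2,L_3$) or equals $E$, the multiplicity of $E$ in its local cohomology at the origin is read off directly from the relevant lemma; this handles the bulk of the entries in Corollary~\ref{cor:lyubez} (for instance all entries at $p=3$ in case (a) via $H^i_{\{0\}}(H^1_{\ol{O}_3}(S)) = H^{i+1}_{\{0\}}(S)$, which degenerates immediately, and the rows $j=3m+4$ and $j=4m+5$ for $p=2$ in case (a)).

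The substantive step is the handful of cases where Theorem~\ref{thm:locring} gives a non-split extension, namely $0\to L_2\to H^{m+3}_{\ol{O}_2}(S)\to L_1\to 0$ in case (a), and $0\to L_1\to H^{3m+4}_{\ol{O}_1}(S)\to E\to 0$, $0\to L_3\to H^1_{\ol{O}_3}(S)\to L_2'\to 0$ in case (b). For each such extension I would write down the long exact sequence in $H^\bullet_{\{0\}}$, plug in the computations of $H^i_{\{0\}}$ of the two flanking simples, and then determine the connecting homomorphisms. Because $E$ is simple, each connecting map between copies of $E$ is either zero or an isomorphism, so the only remaining task is to decide which option occurs.

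To pin down those connecting maps I would not argue one by one, but instead use the spectral sequence
\[
E_2^{i,j} = H^i_{\{0\}}\bigl(H^j_{\ol{O}_p}(S)\bigr) \,\Longrightarrow\, H^{i+j}_{\{0\}}(S),
\]
whose abutment is concentrated in total degree $\dim X = 6m+8$ with value $E$. Every copy of $E$ that survives to $E_\infty$ away from total degree $6m+8$ must therefore be killed by a differential, and since each potential differential lands in or originates from a space where the $E$-multiplicity is already known from the simple-module cases above, the matching of cancellations is forced. The main (and only real) obstacle is bookkeeping: verifying in each of the two cases that the required cancellations do occur in exactly the degrees predicted by the corollary, so that the surviving copies of $E$ in the extension module sit precisely in the $(i,j)$ positions listed. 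Once this is carried out in the three composite cases, assembling the tables for $p=1,2,3$ in (a) and (b) is immediate.
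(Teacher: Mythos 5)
Your overall framework is the right one and matches the paper's: reduce to the multiplicity of $E$ in $H^i_{\{0\}}(H^{\dim X - j}_{\ol{O}_p}(S))$, use Theorem \ref{thm:locring} to identify the inner modules, and use Lemmas \ref{lem:locD1}--\ref{lem:locD3} together with the spectral sequence $H^i_{\{0\}}(H^j_{\ol{O}_p}(S))\Rightarrow H^{i+j}_{\{0\}}(S)$. But there is a genuine gap in the key assertion that ``the matching of cancellations is forced'' by the abutment alone. That claim is false in the one substantive composite case, namely (a), $p=2$, with the non-split extension $0\to L_2\to H^{m+3}_{\ol{O}_2}(S)\to L_1\to 0$.

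Concretely, let $\delta_{2m+2}\colon H^{2m+2}_{\{0\}}(L_1)\to H^{2m+3}_{\{0\}}(L_2)$ be the connecting map in the long exact sequence. If $\delta_{2m+2}$ is an isomorphism one gets the list in the corollary; if $\delta_{2m+2}=0$ one picks up two extra copies of $E$ at $E_2^{2m+2,m+3}$ and $E_2^{2m+3,m+3}$. Both scenarios are compatible with the abutment $H^{i+j}_{\{0\}}(S)$: in the second scenario the extra class $E_2^{2m+3,m+3}$ is killed by $d_{m+1}\colon E_{m+1}^{m+2,2m+3}\to E_{m+1}^{2m+3,m+3}$, and then $E_2^{0,3m+4}$ is rerouted to die via $d_{2m+2}\colon E_{2m+2}^{0,3m+4}\to E_{2m+2}^{2m+2,m+3}$ instead of via $d_{m+2}$, and the ledger still balances (only $E_\infty^{5m+5,m+3}=E$ survives in total degree $6m+8$). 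So the spectral sequence of the filtration together with the \emph{statements} of the lemmas does not determine $\delta_{2m+2}$.

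The missing input is exactly what the proof of Lemma \ref{lem:locD2}(a) supplies: the identification $H^{m+3}_{\ol{O}_2}(S)\cong H^1_{\ol{O}_3}(L_4')$, the degeneration $H^i_{\{0\}}(H^1_{\ol{O}_3}(L_4'))\cong H^{i+1}_{\{0\}}(L_4')$, and the vanishing $H^i_{\{0\}}(L_4')=0$ for $i<3m+4$ via $\F(L_4')\cong L_1$ and Proposition \ref{prop:IH}. This gives $H^i_{\{0\}}(H^{m+3}_{\ol{O}_2}(S))=0$ for $i<3m+3$, which rules out the second scenario and is then enough, together with the abutment, to pin down everything. More structurally: the paper's proofs of Lemmas \ref{lem:locD1}--\ref{lem:locD3} actually compute $H^i_{\{0\}}(H^j_{\ol{O}_p}(S))$ as an intermediate step and then derive $H^i_{\{0\}}(L_p)$ from them, so the Lyubeznik numbers are already implicit there; your plan runs that argument in reverse (from $H^i_{\{0\}}(L_p)$ back to the inner modules), which is not invertible without the extra vanishing. (Two small additional remarks: in case (b), $p=1,3$, no such issue arises because $\ol{O}_p$ has a single non-vanishing local cohomology, so the spectral sequence collapses trivially; and for $p=3$ in case (b) the extension involves $L_2'$, whose $H^i_{\{0\}}$ is not among your listed ingredients, so there too you should use the collapse argument rather than the long exact sequence.)
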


Finally, we give a list of the intersection cohomology groups of the orbit closures. These follow by our previous calculations together with Proposition \ref{prop:IH}.

\begin{corollary}\label{cor:ic}
For $p\in\{1,2,3\}$ the following are the only non-zero intersection cohomology groups $IH^i(\ol{O}_p)$ (in which case they are 1-dimensional):
\begin{itemize}
\item[(a)] If $(G',X)$ is $(A_5,\omega_3),\, (D_6, \omega_5)$ or $(E_7,\omega_6)$:
\begin{itemize}
\item[(1)] When $p=1$, then $i=0, \, m+2, \, 2m+2;$
\item[(2)] When $p=2$, then $i=  0, \, m, \, 2m, \, 2m+2, \, 3m+2, \, 4m+2 ;$
\item[(3)] When $p=3$, then $i=  0, \, 6m+6 .$
\end{itemize}
\item[(b)] If $(G',X)$ is $(C_3,\omega_3)$:
\begin{itemize}
\item[•] When $p=1, 2, 3$, then $i=0, \, 4m+2.$
\end{itemize}
\end{itemize}
\end{corollary}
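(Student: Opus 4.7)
My plan is to apply Proposition \ref{prop:IH}, which identifies $\dim IH^i(\ol{O}_p)$ with the multiplicity of $E$ in $H_{\{0\}}^{i+c_p}(\F(L_p))$, where $c_p = \codim_X O_p$. The first step is to pin down $\F(L_p)$ for $p = 1,2,3$ using the quiver description in Theorem \ref{thm:quiv} and the remark after its proof. In case (a) the Fourier transform reflects each connected component of the quiver, giving $\F(L_2) = L_2$, $\F(L_3) = L_3$, and $\F(L_1) = L_4'$. In case (b) it interchanges the two components, giving $\F(L_2) = L_2$, $\F(L_1) = L_3$, and $\F(L_3) = L_1$. With these identifications, most cases of the corollary should reduce to reading off nonzero degrees from Lemmas \ref{lem:locD1}, \ref{lem:locD2}, \ref{lem:locD3} and shifting indices by $c_p$.

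Concretely, I would handle $p = 2$ first: since $c_2 = m+3$ and $\F(L_2) = L_2$ in both cases, Lemma \ref{lem:locD2} directly yields the degrees $i = 0, m, 2m, 2m+2, 3m+2, 4m+2$ in case (a) and $i = 0, 4m+2$ in case (b). For $p = 3$, where $c_3 = 1$, I would apply Lemma \ref{lem:locD3}(a) to $\F(L_3) = L_3$ in case (a), producing $i = 0, 6m+6$, and Lemma \ref{lem:locD1}(b) to $\F(L_3) = L_1$ in case (b), producing $i = 0, 3m+3 = 0, 4m+2$ (using $m = 1$). For $p = 1$ in case (b), I would use $\F(L_1) = L_3$ together with Lemma \ref{lem:locD3}(b) and $c_1 = 3m+4$ to obtain $i = 0, 3m+3 = 0, 4m+2$, consistent with the cone-based computation already carried out inside the proof of Lemma \ref{lem:locD3}(b).

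The only case not covered by this template is $p = 1$ in case (a), where $\F(L_1) = L_4'$ and the local cohomology $H^j_{\{0\}}(L_4')$ has not been computed in the lemmas. For this case I would mimic the argument of Lemma \ref{lem:locD3}(b): since $\ol{O}_1$ is the affine cone over the smooth projective variety $G/P$, the formula of \cite[Section I.5]{borel} identifies $IH^i(\ol{O}_1)$ with the primitive cohomology $P^i(G/P)$ for $i \leq \dim_{\C} G/P = 3m+3$, and with zero for $i > 3m+3$. Using the factorization of the Poincar\'e polynomial of $G/P$ computed in the proof of Proposition \ref{prop:locO1}, together with the Lefschetz-decomposition identity $\dim P^{2k}(G/P) = \dim H^{2k}(G/P) - \dim H^{2k-2}(G/P)$ valid for $2k \leq 3m+3$, I would verify case-by-case for $m = 2, 4, 8$ that the primitive cohomology is one-dimensional in exactly the degrees $i = 0, m+2, 2m+2$.

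This final primitive-cohomology calculation is the only step requiring explicit combinatorial work and is where the main effort lies; all other cases collapse to the previously established lemmas once the Fourier transforms of the $L_p$ are pinned down. The uniformity of the output $i = 0, m+2, 2m+2$ across $m = 2, 4, 8$ should emerge from the uniform factorization of the Poincar\'e polynomial, reflecting once more the coherence of the subexceptional series.
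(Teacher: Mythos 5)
Your proposal is essentially correct, and the general framework (Proposition \ref{prop:IH} combined with the Fourier transform identifications from Theorem \ref{thm:quiv} and the local cohomology lemmas) matches the paper's intended reasoning. For $p=2$ and $p=3$ in both cases, and for $p=1$ in case (b), your reading-off from Lemmas \ref{lem:locD1}--\ref{lem:locD3} is exactly what the paper does.

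The one place you diverge is case (a), $p=1$, where you resort to the affine-cone primitive-cohomology formula. This is a valid alternative route (and the paper itself uses it for the $C_3$ case inside Lemma \ref{lem:locD3}(b)), but the paper already contains the needed data more directly: in the proof of Lemma \ref{lem:locD2}(a) it is shown that $H^{i}_{\{0\}}(H^{m+3}_{\ol{O}_2}(S))=E$ for $i=3m+3,4m+5,5m+5$ and zero otherwise, and the degenerating spectral sequence for $H^{m+3}_{\ol{O}_2}(S)\cong H^1_{\ol{O}_3}(L_4')$ gives $H^{j}_{\{0\}}(L_4')=E$ precisely for $j=3m+4,\,4m+6,\,5m+6$. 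Feeding this into Proposition \ref{prop:IH} with $\F(L_1)=L_4'$ and $c_1=3m+4$ gives $i=0,m+2,2m+2$ with no new combinatorics. Your primitive-cohomology route, while correct, is more work and also exposes you to a pitfall: the expression for $P_m(q)$ printed in the proof of Proposition \ref{prop:locO1} has a typo at $m=4$. As stated, $(1+q^{m+2})(1+q^{22-32/m})(1-q^{3m+4})/(1-q^2)$ gives $(1+q^6)(1+q^{14})(1-q^{16})/(1-q^2)$, a polynomial of degree $34$, whereas $\dim_{\C}G/P=15$ forces degree $30$; the correct factor is $(1+q^{10})$, obtainable as $P_{D_6}(q)/P_{A_5}(q)=(1+q^6)(1+q^{10})(1-q^{16})/(1-q^2)$. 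Using the printed formula literally would give the wrong primitive degrees $\{0,6,14\}$ instead of $\{0,6,10\}=\{0,m+2,2m+2\}$ for $m=4$. So if you follow your route, you should recompute the Poincar\'e polynomial from the Weyl-group degrees rather than taking (\ref{eq:poin})'s generalization at face value; the degree check $\deg P_m=2(3m+3)$ is a useful sanity test.
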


\subsection{Gorenstein property and Castelnuovo--Mumford regularity}\label{sec:last}

In the last section, we establish some results for Gorenstein varieties based on observations that we extracted from the previous sections.

Throughout this section $G=G'\times \C^*$ denotes a linearly reductive complex connected algebraic group, and $X$ is a finite-dimensional rational representation of $G$. Here the factor $\C^*$ of $G$ acts on $X$ by the usual scaling.

We start with a result describing the elements of a local cohomology module that are annihilated by its supporting ideal, which is relevant even in the case when $G'=\{1\}$.

\begin{lemma}\label{lem:gor}
Let $Z$ be a Gorenstein $G$-stable closed subvariety of $X$ with $c=\codim_X Z \,\, >0$, and $I$ the defining ideal of $Z$. Then there is a $G$-equivariant isomorphism of $S$-modules
\[\Hom_S (S/I, \, H^c_Z(S)\,) \,\, \cong \,\, S/I \oo \chi \,,\] 
for some character $\chi: G \to \C^*$. In particular, the module $\,H^c_Z\,(S)\,$ has a unique (up to scalar) $G$-semi-invariant section $h$ of degree $\, - \CM(I) - c$ with $\Ann_S(h) = I$ (here $\CM(I)$ is the Castelnuovo--Mumford regularity of $I$).
\end{lemma}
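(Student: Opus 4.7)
The plan is to identify $\Hom_S(S/I,\, H^c_Z(S))$ with a twist of the canonical module $\omega_{S/I}$ of $S/I$, then exploit the Gorenstein hypothesis to recognize $\omega_{S/I}$ as $S/I$ itself up to a character and grading shift, and finally translate that shift into the Castelnuovo--Mumford regularity $\CM(I)$.

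First, since $S/I$ is Cohen--Macaulay of codimension $c$, one has $\operatorname{grade}_S I = c$, so $H^i_Z(S) = \Ext^i_S(S/I, S) = 0$ for $i < c$. Standard grade-sensitivity of local cohomology (derivable either from the Ext--local-cohomology spectral sequence, or directly from the presentation $H^c_Z(S) = \varinjlim_n \Ext^c_S(S/I^n, S)$) then yields a canonical, hence $G$-equivariant, isomorphism
\[
\Hom_S(S/I,\, H^c_Z(S)) \,\cong\, \Ext^c_S(S/I,\, S).
\]

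Second, using the $G$-equivariant identification $\omega_S \cong \det X^{\ast} \otimes S(-n)$, where $n = \dim X$, one obtains $\Ext^c_S(S/I, S) \cong \omega_{S/I} \otimes \det X \otimes t^{-n}$, in which $\omega_{S/I} := \Ext^c_S(S/I, \omega_S)$ is the canonical module of $S/I$. The Gorenstein assumption makes $\omega_{S/I}$ free of rank one over $S/I$; its generator is unique up to scalar in its lowest degree, so must be $G$-semi-invariant, yielding a $G$-equivariant isomorphism $\omega_{S/I} \cong (S/I) \otimes \chi' \otimes t^{-a}$ for some character $\chi'$ of $G$ and integer $a = a(S/I)$ (the $a$-invariant). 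Assembling these gives
\[
\Hom_S(S/I,\, H^c_Z(S)) \,\cong\, S/I \otimes \chi, \qquad \chi := \chi' \cdot \det X,
\]
with generator in degree $-a - n$. The identity $\reg(S/I) = a + d$ valid for Cohen--Macaulay $S/I$ of dimension $d = n - c$, combined with $\CM(I) = \reg(S/I)$, rewrites this degree as $-\CM(I) - c$.

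Finally, the element $h$ is the image of $1 \in S/I \otimes \chi$ under the above isomorphism: it is $G$-semi-invariant of character $\chi$, lies in degree $-\CM(I) - c$, and has $\Ann_S(h) = I$ because $S/I \otimes \chi$ is a faithful cyclic $S/I$-module; its uniqueness up to scalar follows because the degree $-\CM(I) - c$ component of $S/I \otimes \chi$ is one-dimensional. The main place where care is required is step two, where one must track correctly the $\det X$ factor and the $n$-fold grading shift coming from $\omega_S$, and verify that the rank-one freeness of $\omega_{S/I}$ forces $G$-equivariant triviality rather than just abstract $S/I$-triviality; once these are handled, the argument is an assembly of standard commutative-algebra facts about canonical modules.
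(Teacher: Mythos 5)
Your proof is correct and follows essentially the same route as the paper's: both reduce $\Hom_S(S/I,\,H^c_Z(S))$ to $\Ext^c_S(S/I,\,S)$ (the paper cites Huneke--Koh and runs the spectral sequence $\Ext^i_S(S/I, H^j_Z(S)) \Rightarrow \Ext^{i+j}_S(S/I,S)$, which is the same mechanism your ``grade-sensitivity'' collapses to once $H^j_Z(S)=0$ for $j<c$), and both then invoke the Gorenstein hypothesis to identify the result with a character twist of $S/I$. You go further by explicitly introducing $\omega_{S/I}$ and the $a$-invariant to nail down the degree shift, which the paper leaves implicit; that is a genuine improvement in transparency.

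One caveat worth flagging: in the final step you invoke ``$\CM(I)=\reg(S/I)$.'' Under the standard commutative-algebra convention $\reg(I)=\reg(S/I)+1$ for a nonzero proper ideal, so $\reg(I)\ne\reg(S/I)$, and your degree $-a-n=-\reg(S/I)-c$ would then be $-\reg(I)-c+1$, off by one from the stated $-\CM(I)-c$. However, checking the paper's own Example \ref{ex:spin9} (the origin $O_0$ has $H^n_{\{0\}}(S)$ generated in degree $-n$, and they compute $0=\reg(O_0)=-(-8)\cdot 2-16$, i.e.\ they feed in $\reg(S/I)=0$ rather than $\reg(\mathfrak m)=1$) confirms that the paper's $\CM(I)$ in fact means $\reg(S/I)$ throughout, so your interpretation matches the paper's actual usage. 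In other words, your mathematics is sound; you have correctly decoded a mildly nonstandard convention in the source. Your remark on the uniqueness of $h$ could also be slightly sharpened by noting that any $G$-semi-invariant element of $S/I\otimes\chi$ with annihilator exactly $I$ must be a unit of $S/I$, hence a scalar (here one uses that $S/I$ is a graded domain with $(S/I)_0=\C$), but the conclusion is the same.
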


\begin{proof}
We follow the proof of \cite[Proposition 3.1 (1)]{hunkoh} by additionally keeping track of equivariance. First, $\Hom_S (S/I, \, M) \cong  \Hom_S (S/I, \, H^0_Z(M))$, for any $S$-module $M$. Hence, we have a spectral sequence
\[\Ext^i_S(S/I, \, H^j_Z(S) ) \, \Longrightarrow \, \Ext_S^{i+j}(S/I, S).\]
This yields a $G$-equivariant isomorphism
\[\Hom_S (S/I, \, H^c_Z(S)\,) \, \cong \, \Ext_S^c(S/I,S).\]
Since $Z$ is Gorenstein, we have a $G$-equivariant isomorphism $\Ext_S^c(S/I,S) \cong S/I \oo \chi$ for a character $\chi$.
\end{proof}

The result above provides an interesting technique for proving that a variety is not Gorenstein. For example, by Lemma \ref{lem:H4O2} we obtain that for $(G',X)=(C_3,\omega_3)$ the variety $\ol{O}_2$ is not Gorenstein. Similarly, \cite[Lemma 3.4]{bindmod} implies the (well-known) result that the affine cone over the twisted cubic curve is not Gorenstein.

Since the $\D_X$-module $H^c_Z(S)$ has a unique simple submodule $L$ (corresponding to the intersection cohomology sheaf of the trivial local system on $Z_{reg}$), it is interesting to see when the element $h$ as above lies in $L$. While this happens frequently (e.g., for our subexceptional series), it is not always the case as can be seen already when $Z$ is a hypersurface (e.g., the discriminant of cubics \cite{bindmod}).

In \cite[Conjecture 5.17]{leva}, a conjecture has been made for the existence of (semi)-invariant sections for some $\D$-modules. The conjecture has been disproved precisely for orbit closures that are not Gorenstein (see \cite{claudiu5} and \cite[Proposition 5.8]{catdmod}). On the other hand, from \cite{kraswey}, \cite{kraswey2} and \cite{kraswey3}, we see that indeed all orbit closures in our exceptional series $(G',X)$ are Gorenstein, with the only exception for $\ol{O}_2$ when $(G',X)=(C_3,\omega_3)$ due to the reason mentioned above (nevertheless, its regularity is obtained in \cite[Page 38]{kraswey}). In conclusion, when the group $G$ is large enough, the existence of a semi-invariant section as in Lemma \ref{lem:gor} gives strong evidence for the Gorenstein property of a $G$-stable subvariety $Z$.

\begin{prop}\label{prop:bscm}
Consider $Z$ as in Lemma \ref{lem:gor}. Assume $\C[X]^{G'} = \C[f]$, and that there is a surjective map of $\D$-modules  $\pi: \D f^\alpha \twoheadrightarrow \,H^c_Z\,(S)\,$, for some $\alpha \in \mathbb{Q}$. Let $r\in \, \alpha+\bb{N}$ be maximal with the property $\pi( f^{r} ) \neq 0$. 

Then $r$ is a root of the Bernstein-Sato polynomial of $f$ and $\,\CM(I) \,= \,- \, r \, \cdot \, \deg f \, - c$. 
\end{prop}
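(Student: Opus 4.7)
I would prove the two assertions in sequence: the Bernstein--Sato statement is an immediate consequence of the functional equation for $f$, while the regularity identity is obtained by identifying $\pi(f^r)$, up to scalar, with the canonical $G$-semi-invariant section $h$ of $H^c_Z(S)$ from Lemma~\ref{lem:gor}, followed by a comparison of $\C^*$-weights. For the first assertion, the functional equation provides $P(s)\in\D[s]$ with $P(s)\cdot f^{s+1}=b_f(s)\cdot f^s$; if $b_f(r)\neq 0$, specializing at $s=r$ yields $f^r\in\D\cdot f^{r+1}$, but by the maximality of $r$ we have $\pi(f^{r+1})=0$, whence $\D\cdot f^{r+1}\subseteq\ker\pi$ and $\pi(f^r)=0$, a contradiction. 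Thus $b_f(r)=0$.

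\textbf{Setting up the regularity.} The element $\pi(f^r)\in H^c_Z(S)$ is nonzero, $G'$-invariant, and of $\C^*$-weight $r\cdot\deg f$, since $\pi$ is $G$-equivariant and $f^r$ has this equivariance type. From $f\cdot\pi(f^r)=\pi(f^{r+1})=0$, multiplication by $f$ is a zero-divisor on $H^c_Z(S)$. For $Z$ an irreducible Gorenstein variety, local duality (via $\Ext^c_S(S/I,S)\cong\omega_{S/I}$) identifies the set of associated primes of $H^c_Z(S)$ as exactly $\{I\}$, so $f\in I$. In particular $(S/I)^{G'}=\C[f]/(f)=\C$, and Lemma~\ref{lem:gor} yields that the $G'$-invariant part of the $I$-socle $\Hom_S(S/I,H^c_Z(S))\cong S/I\otimes\chi$ is the one-dimensional $\C$-subspace concentrated in $\C^*$-weight $-\CM(I)-c$, spanned by $h$.

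\textbf{Identifying $\pi(f^r)$ with $h$, and main obstacle.} Once it is shown that $\pi(f^r)$ lies in this $I$-socle, comparing $\C^*$-weights with $h$ yields $r\cdot\deg f=-\CM(I)-c$, as desired. The approach I would take is to exploit surjectivity of $\pi$: by reductivity of $G$ there exists a $G'$-invariant, $\C^*$-semi-invariant preimage $v\in\D f^\alpha$ of $h$, necessarily of weight $-\CM(I)-c$. The hypothesis $\C[X]^{G'}=\C[f]$ should force $(\D f^\alpha)^{G'}$ to be spanned (in each fixed $\C^*$-weight) by monomials $f^\beta$ for $\beta$ in a discrete set controlled by iterated application of the Bernstein--Sato equation; one would then have $v=c\cdot f^\beta$ with $\beta\cdot\deg f=-\CM(I)-c$ for some nonzero scalar $c$, and the maximality of $r$ together with $\pi(v)\neq 0$ forces $\beta\leq r$, while the one-dimensionality of the socle in the given weight forces $\beta=r$.

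\textbf{Main obstacle.} The key subtlety is precisely the structural claim that every $G'$-invariant, $\C^*$-semi-invariant element of $\D f^\alpha$ is a scalar multiple of a power $f^\beta$. This rests on a careful understanding of $\D^{G'}$ in the prehomogeneous vector space framework --- a Howe-type duality or an explicit description of the invariant differential operators --- and is the step where the real work of the proof lies.
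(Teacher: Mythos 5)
Your overall plan is the same as the paper's: show via the functional equation that $r$ is a root of $b_f$, then identify $\pi(f^r)$, up to scalar, with the canonical socle section $h$ of Lemma~\ref{lem:gor} and compare $\C^*$-weights. The Bernstein--Sato paragraph is correct (the paper simply cites \cite[Proposition~4.9]{catdmod} for this). However, what you flag as the ``main obstacle'' is not an obstacle at all, and certainly does not require Howe duality or any structure theory of $\D^{G'}$. The point is that $\D f^\alpha$ is, by definition, the $\D$-submodule of $S_f\cdot f^\alpha$ generated by $f^\alpha$; in particular it is a $G$-submodule of $S_f\cdot f^\alpha$. Since $G'$ is linearly reductive and $f$ is $G'$-invariant, taking $G'$-invariants commutes with localization at $f$, so $(S_f)^{G'}=(S^{G'})_f=\C[f,f^{-1}]$, and hence $(S_f\cdot f^\alpha)^{G'}$ is spanned by $\{f^{\alpha+n}\}_{n\in\Z}$. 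Each of these has a distinct $\C^*$-weight, so any $G'$-invariant $\C^*$-weight vector in $\D f^\alpha$ is a scalar multiple of a single $f^\beta$. This is a statement purely about the underlying $G$-module, not about invariant differential operators, and it follows in one line from the hypothesis $\C[X]^{G'}=\C[f]$.

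Two smaller remarks. First, your closing phrase ``the one-dimensionality of the socle in the given weight forces $\beta=r$'' is not quite the right reason. Having reduced to $h=\pi(f^\beta)$ for some $\beta\leq r$, the argument to get $\beta=r$ is simply: since $f\in I$ and $h$ is annihilated by $I$, we have $f\cdot h=\pi(f^{\beta+1})=0$; if $\beta<r$ then $\pi(f^r)=f^{\,r-\beta-1}\cdot\pi(f^{\beta+1})=0$, contradicting the choice of $r$. (Equivalently, and this is closer to the paper's phrasing: the only $G'$-invariant $\C^*$-weight vector of $H^c_Z(S)$ killed by $f$ is $\pi(f^r)$, and $h$ is such a vector.) Second, for completeness one should note why $f\in I$ in the first place, which is what guarantees $r$ exists: the restriction of $\D f^\alpha$ to $X\setminus V(f)$ is a simple $\D$-module, so a nonzero quotient supported on the proper closed subset $Z$ forces $Z\subseteq V(f)$.
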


\begin{proof}
The existence of the map $\pi$ implies that we have $f\in I$. In particular, this shows the existence of $r \in\, \alpha+\bb{N}$. By \cite[Proposition 4.9]{catdmod}, $r$ is a root of the Bernstein-Sato polynomial of $f$. 

Since $\C[X]^{G'} = \C[f]$, the only semi-invariants (up to constant) in $\D f^\alpha$ are powers of $f$. Hence, the only semi-invariant in $H^c_Z\,(S)$ that is annihilated by $f$ is $\pi(f^r)$. By Lemma \ref{lem:gor}, $H^c_Z\,(S)$ has a semi-invariant section $h$ annihilated by $I$. Since $h$ is annihilated by $f$, this shows that $h=\pi(f^r)$ (up to non-zero constant).
\end{proof}

\begin{remark}\label{rem:piquiv}
The existence of a map $\pi$ can often be seen directly from the quiver of $\opmod_G(\D_X)$. Namely, the module $S_f \cdot f^\alpha$ (for $\alpha$ such that this is $G$-equivariant) is an injective object in $\opmod_G(\D_X)$, and the module $H^c_O(S)$ is an injective object in $\opmod_G^{\ol{O}}(\D_X)$ for an orbit $O$ (see \cite[Lemma 2.4]{bindmod}, \cite[Lemma 3.11]{catdmod}).
\end{remark}

We established a fundamental link between the roots of the Bernstein--Sato polynomial of $f$ and Castelnuovo-Mumford regularity of Gorenstein varieties that appear in the localizations at powers of $f$ as above. The assumption for the existence of $\pi$ as above is satisfied frequently for prehomogeneous vector spaces with semi-invariants. It is satisfied for our subexceptional series for all (Gorenstein) orbit closures. Using Remark \ref{rem:piquiv}, it is not difficult to see that in all the cases encountered in \cite{bindmod}, \cite{mike} and \cite[Section 5]{catdmod}, the map $\pi$ exists for Gorenstein orbit closures with only one exception from \cite[Section 5.5]{catdmod}:
\begin{example}\label{ex:spin9}
Consider $G'=\operatorname{Spin}(9)$ and let $X$ be its 16-dimensional spin representation. The group $G$ acts on $X$ with 4 orbits $O_0,\, O_1, \, O_2, \, O_3$ of codimensions $16, \, 5,\, 1, \, 0$, respectively. Then $\C[X]^{G'}=\C[f]$, with $\deg f = 2$.  The roots of the Bernstein-Sato polynomial of $f$ are $-1, \, -8$. It follows from \cite[Section 5.1]{kraswey} that $\ol{O}_1$ is Gorenstein with $\reg(\ol{O}_1) = 3$. Since $H^5_{\ol{O}_1}(S)$ is a simple $\D$-module that corresponds to an isolated vertex of the quiver of $\opmod_G(\D_X)$ (see \cite[Section 5.5]{catdmod}), it is not a composition factor of $S_f \cdot f^\alpha$, for any $\alpha \in \mathbb{Q}$. For $\ol{O}_0, \, \ol{O}_2$ the map $\pi$ exists, and we have $1=\reg(\ol{O}_2)=-(-1)\cdot 2 - 1$ and $0=\reg(O_0)=-(-8)\cdot 2 - 16$.

The exceptional behavior of $\ol{O}_1$ can be also seen from the fact that it is a self-dual highest weight orbit closure \cite{knopmenzel}. Moreover, both of the simple equivariant $\D$-modules corresponding to $O_2$ and $O_1$ (by Lemma \ref{lem:gor}) have $G'$-invariant elements in degree $-8$. This also demonstrates the sharpness of \cite[Corollary 3.23]{catdmod}, as $X$ is a spherical $G$-variety that is not of Capelli type.
\end{example}

Finally, we apply the results above to our subexceptional series again.

\begin{corollary}\label{cor:CM}
For $p\in\{1,2\}$ the Castelnuovo--Mumford regularity $\reg(I_p)$ of the defining ideal $I_p$ of $\ol{O}_p$ is:
\begin{itemize}
\item[(a)] If $(G',X)$ is $(A_5,\omega_3),\, (D_6, \omega_5)$ or $(E_7,\omega_6): \,\,\reg(I_1)=m+2 ,\,\, \reg(I_2)=m+3;$
\
\item[(b)] If $(G',X)$ is $(C_3,\omega_3): \,\, \reg(I_1)= \reg(I_2) = m+2.$
\end{itemize}
\end{corollary}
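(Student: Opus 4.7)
My approach splits by the Gorenstein property. By \cite{kraswey, kraswey2, kraswey3}, all orbit closures $\ol{O}_1$ (in every case) and $\ol{O}_2$ in case (a) are Gorenstein, while $\ol{O}_2$ for $(C_3,\omega_3)$ is not. For the Gorenstein ones I will apply Proposition \ref{prop:bscm}, which reduces the computation of $\reg(I_p)$ to exhibiting a surjection $\pi: \D f^\alpha \twoheadrightarrow H^{c_p}_{\ol{O}_p}(S)$ and then determining the largest $r \in \alpha + \bb{N}$ with $\pi(f^r) \neq 0$; the formula $\reg(I_p) = -4r - c_p$ then gives the answer. Existence of $\pi$ in each relevant case is immediate from the explicit descriptions of $H^{c_p}_{\ol{O}_p}(S)$ in Section \ref{subsec:ring} (cf.\ also Remark \ref{rem:piquiv}).

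The input descriptions I would feed in are: for $\ol{O}_1$ in case (a), Theorem \ref{thm:locring}(a) gives $H^{3m+4}_{\ol{O}_1}(S) = L_1 \cong \D f^{r_2}/\D f^{r_1}$, so $\alpha = r_2$; for $\ol{O}_2$ in case (a), Section \ref{subsec:ring} gives $H^{m+3}_{\ol{O}_2}(S) \cong S_f\sqrt{f}/\D f^{r_1+1}$, and since $r_2$ is the smallest root of $b_f$ in the half-integer lattice we have $S_f\sqrt{f} = \D f^{r_2}$, so again $\alpha = r_2$; for $\ol{O}_1$ in case (b), the analogous identification $H^{3m+4}_{\ol{O}_1}(S) \cong S_f\sqrt{f}/\D f^{r_2+1}$ combined with the fact that $r_3$ is now the smallest half-integer root gives $\alpha = r_3$.

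Determining the maximal $r$ in each of the three Gorenstein cases is the key arithmetic step, and it amounts to tracking the jumps of the decreasing filtration $\{\D f^s\}$. These jumps occur precisely at the roots of $b_f$ in (\ref{eq:roots}), since $\D f^s = \D f^{s+1}$ exactly when $b_f(s)\neq 0$. A direct inspection then yields $r = r_2$ for $\ol{O}_1$ in case (a), $r = r_1$ for $\ol{O}_2$ in case (a), and $r = r_2$ for $\ol{O}_1$ in case (b). Substituting into $\reg(I_p) = -4r - c_p$ and simplifying with (\ref{eq:roots}) produces the claimed values $m+2$, $m+3$, and $m+2$, respectively.

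For the remaining non-Gorenstein orbit closure $\ol{O}_2$ in $(C_3,\omega_3)$, Proposition \ref{prop:bscm} does not apply, so I would simply invoke the direct computation on \cite[Page 38]{kraswey}, which gives $\reg(I_2) = m+2 = 3$. The main obstacle in the Gorenstein portion is the careful bookkeeping for $\{\D f^s\}$, and in particular the case (b) subtlety that $r_3 + 1 = r_2$ (valid only because $m = 1$); this coincidence forces an additional filtration jump inside the range $[r_3,\, r_2+1]$, so that $\pi(f^{r_3+1}) = \pi(f^{r_2})$ remains nonzero, and without noticing this one would naively conclude $r = r_3$ and obtain the wrong regularity.
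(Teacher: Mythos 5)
Your proposal is correct and follows essentially the same route as the paper: for the Gorenstein orbit closures it applies Proposition \ref{prop:bscm} to the explicit presentations of $H^{c_p}_{\ol{O}_p}(S)$ as quotients of $\D f^\alpha$ obtained in Section \ref{subsec:ring} and Theorem \ref{thm:simples}, and for the non-Gorenstein $\ol{O}_2$ in the $(C_3,\omega_3)$ case it invokes \cite[Page 38]{kraswey}. The identifications of $\alpha$ and $r$, the use of the jump structure of $\{\D f^s\}$ governed by the roots in (\ref{eq:roots}), and the coincidence $r_3+1=r_2$ in case (b) are all handled correctly, and the arithmetic $\reg(I_p)=-4r-c_p$ gives the stated values.
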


\section*{Acknowledgments}

We are grateful to Michael Perlman and Claudiu Raicu for helpful conversations and suggestions. Weyman was partially supported by NSF grant DMS 1802067, Sidney Professorial Fund and National Science Centre, Poland grant UMO-2018/29/BST1/01290.

\bibliographystyle{alpha}
\bibliography{biblo}

\end{document}